%% file: main.tex
\documentclass{article}
\usepackage[utf8]{inputenx}
\usepackage{tikz-cd}
\usepackage{indentfirst}
\usepackage{microtype}
\usepackage{amsfonts}
\usepackage{mathtools}
\usepackage[english]{babel}
\usepackage{euscript}
\usepackage{setspace}
\usepackage{amsthm}
\usepackage[bottom]{footmisc}
\usepackage{bbm}
\usepackage{xpatch}
\usepackage{pgfplots}
\pgfplotsset{compat = 1.15}
\usetikzlibrary{arrows.meta}
\usepackage{amsmath,url}
\usepackage{wrapfig}
\usepackage{dsfont} 
\usepackage{caption}
\usepackage{float}
\usepackage{bm}
\usepackage [a4paper, top=3.5cm, bottom=3.5cm, left=3.3cm, right=3.3cm] {geometry}
\usepackage{comment}
\usepackage{todonotes}
\usepackage{amssymb}
\usepackage{overpic}
\usepackage{mathrsfs} 
\usepackage{hyperref}
\usepackage{makeidx}

\usepackage{resmes}

\usepackage{graphicx}
\usepackage[export]{adjustbox}
\graphicspath{ {./images/} }

\usepackage{diagbox}
\usepackage{makecell}

\usepackage{lettrine}
\theoremstyle{definition}
\newtheorem{definition}{Definition}[section]
\newtheorem{theorem}[definition]{Theorem}
\newtheorem{proposition}[definition]{Proposition}
\newtheorem{example}[definition]{Example} 
\newtheorem{rk}{Remark}[section]
\newtheorem{lemma}[definition]{Lemma}

\newtheorem{corollary}[definition]{Corollary}

\let\oldexample\example
\renewenvironment{example}
  {\oldexample\pushQED{\qed}\ignorespaces} 
  {\hfill$\blacklozenge$\par}                  

\usepackage{stackengine}

\newcommand{\jump}{\hfill\break}
\renewcommand{\inf}{\mathop{\mathrm{inf}\vphantom{\mathrm{sup}}}}

\newcommand{\R}{\mathbb{R}}
\newcommand{\N}{\mathbb{N}}
\newcommand{\C}{\mathbb{C}}
\newcommand{\Z}{\mathbb{Z}}
\newcommand{\T}{\mathbb{T}}

\newcommand{\e}{\textnormal{e}}
\renewcommand{\u}{\textnormal{u}}
\renewcommand{\i}{\textnormal{i}}
\renewcommand{\d}{\textnormal{d}}

\renewcommand{\L}{\operatorname{L}}
\renewcommand{\H}{\operatorname{H}}

\DeclareMathOperator{\Arg}{Arg}

\newcommand{\OA}{\mathcal{M}_{\mathrm{OA}}}
\newcommand*\samethanks[1][\value{footnote}]{\footnotemark[#1]}

\title{Unstable Manifolds for the Kuramoto Model:\\ Convergence to the Ott-Antonsen Manifold}
\author{ Christian Kuehn\thanks{Technical University of Munich, School of Computation, Information and Technology, Department of Mathematics, Boltzmannstraße 3, 85748 Garching, Germany. E-Mail: ckuehn@ma.tum.de, giacomo.landi@tum.de}, Giacomo Landi\samethanks}
\date{\today}

\begin{document}

\maketitle
\input{abstract}
\input{Intro}
\input{Spectral_Analysis}
\input{Unstable_Man}

\input{MFL_CL_connection}
\input{conclusion}
\bibliographystyle{abbrv}
\bibliography{zReference}
\input{Appendix}

\end{document}

%% file: abstract.tex
\begin{abstract}
In this paper, we study the finite-dimensional, homogeneous, all-to-all coupled Kuramoto model. We begin by performing a complete spectral analysis of all equilibria of the system. Motivated by this analysis, we then derive an explicit description of the unstable manifolds associated with the family of incoherent equilibria. Subsequently, we establish the convergence of this family of unstable manifolds to the Ott–Antonsen manifold $\OA$, with respect to the Hausdorff distance induced by the $p$-Wasserstein metric. We further carry out an analogous analysis for the corresponding counterpart of $\OA$ in the continuum limit. Moreover, we establish the uniform-in-time convergence of trajectories of the finite-dimensional Kuramoto model on these invariant manifolds towards their corresponding mean-field limit trajectories. Our results provide a direct geometric link between finite-dimensional particle systems and their mean-field, or continuum, limits.
\end{abstract}

%% file: Intro.tex
\begin{section}{Introduction}
Among interacting particle systems, the Kuramoto model has become one of the most important and influential models since its original formulation \cite{kuramoto2003chemical}. Its relevance stems from its ability to serve as a simple yet effective benchmark capable of accurately capturing real-world phenomena. In particular, the Kuramoto model has been widely used to describe systems of coupled oscillators; see \cite{kuramoto2005self}. From a mathematical perspective, the Kuramoto model consists of a system of $N$ ordinary differential equations given by
\begin{equation}\label{KM}
    \frac{\textnormal{d}\theta_i}{\textnormal{d}t}(t)=\dot{\theta}_i(t)=\omega_i+\frac{K}{N}\sum_{j=1}^Na_{ij}\sin{\left(\theta_j(t)-\theta_i(t)\right)},\qquad\forall i\in \{1,...,N\}=:[N]\tag{KM}
\end{equation}
where $\theta_i\in\mathbb{S}^1$ for all $i=1,...,N$ denotes the \textit{phase} of each oscillator, $\omega_i$ denotes the \textit{natural (or intrinsic) frequency} of each oscillator, $K\in\mathbb{R}^+$ is the \textit{coupling strength}, $N$ is the number of oscillators, and $\mathcal{A}=(a_{ij})_{i,j\in[N]}\in\R^{N\times N}$ is the adjacency matrix.\\

One of the most striking phenomena exhibited by \eqref{KM} is the long-time synchronization of the oscillators' phases. In the classical case of \textit{all-to-all} coupling, i.e., $a_{ij}=1$ for all $i,j=1,\ldots,N$, this phenomenon was studied in \cite{van1993lyapunov} using a Lyapunov function. Further results in this direction can be found in \cite{benedetto2014complete,dong2013synchronization,ha2010complete}, where a complete and rigorous analysis of synchronization is provided. For more general positive symmetric weights, this phenomenon was proved in \cite{jadbabaie2004stability}. When considering the Kuramoto model on oriented or signed graphs, the analysis becomes more difficult and new types of limiting behaviour may arise; see \cite{delabays2019kuramoto,burylko2014bifurcation,yoon2026stability,hong2011kuramoto}. Synchronization for the Kuramoto model perturbed by rough noise, a class of noises that includes fractional Brownian motion as a special case, has recently been established in \cite{neamtu2026synchronization}. Furthermore, a complete spectral analysis of all equilibria of the inhomogeneous Kuramoto model, i.e. when $\omega_i\neq\omega_j$, was carried out in \cite{mirollo2005spectrum}.\\

Another classical approach to problems related to the Kuramoto model, and more generally to particle systems, is to study the qualitative properties of the corresponding mean-field limit equation; see \cite{chiba2015proof}. The literature on mean-field limits is extensive; see \cite{braun1977vlasov,dobrushin1979vlasov,golse2016dynamics,jabin2018quantitative,neunzert2006approximation,serfaty2020mean,sznitman2006topics}.\\

In the case of the \textit{homogeneous} Kuramoto model, i.e. $\omega_i=\overline{\omega}$ for all $i$, and \textit{all-to-all} coupling, up to a change of coordinates, since the initial system is invariant under translations, \eqref{KM} can be rewritten as
\begin{equation}\label{KMH}
    \dot{\theta}_i(t)=\frac{K}{N}\sum_{j=1}^N\sin{(\theta_j(t)-\theta_i(t))}.\qquad\forall i=1,...,N\tag{KMH}
\end{equation}
The mean-field limit PDE associated with this particle system is given by
\begin{equation}\label{MFL}
\begin{cases}\tag{KMFL}
\displaystyle
    \partial_t \mu_t(\theta)+K\nabla_\theta\cdot\left(\mu_t(\theta)\int_{0}^{2\pi}\sin(\phi-\theta)\text{d}\mu_t(\phi)\right)=0,\\
    \mu_{t=0}=\mu_0.
\end{cases}
\end{equation}
with $\mu \in C([0,T]; \mathcal{P}(\mathbb{S}^1))$, where $\mu_t(\theta)$ describes the probability to find an oscillator at time $t$ at position $\theta$. The relationship between the particle system \eqref{KMH} and its mean-field limit equation \eqref{MFL} can be understood by introducing the empirical measure
$$\mu_t^N\coloneqq\frac{1}{N}\sum_{i=1}^N\delta_{\theta_i(t)},$$
which can be shown to satisfy \eqref{MFL}; see \cite[Th. 1.3.1]{golse2016dynamics}. The convergence of the microscopic dynamics $(\theta_i)_{i=1}^N$ governed by \eqref{KMH} towards the mean-field solution $\mu$ of \eqref{MFL} can be established using a stability argument; see \cite{golse2016dynamics,dobrushin1979vlasov}. This argument yields an estimate of the form
\begin{equation}\label{MFL_rate_conv}
    W_1(\mu_t,\mu_t^N)\leq C(t)W_1(\mu_0,\mu_0^N),
\end{equation}
where $W_1$ is the $1$-Wasserstein distance \cite{dobrushin1979vlasov}. We refer also to~\cite{loeper2006uniqueness}, where Wasserstein distances with exponents different from $1$ are used in the same context. For a detailed introduction to Wasserstein distances, we refer the reader to \cite{villani2008optimal}.\\

Another possible way of approximating particle systems such as \eqref{KMH} is via the continuum limit. In this case, one obtains pointwise convergence of the solutions of \eqref{KMH} towards $x(t,\xi)\in C([0,T];L^\infty(I,[0,2\pi]))$, the solution of the following integro-differential equation:
\begin{equation}\label{CL}
\begin{cases}\tag{KCL}
\displaystyle
    \partial_t x(t,\xi)=K\int_0^1 \sin\big(x(t,\xi)-x(t,z)\big)\textnormal{d}z,\\
    x(0,\xi)=x_0(\xi).
\end{cases}
\end{equation}
This limit is of comparable importance, particularly in the context of network dynamics and graph limits; see \cite{medvedev2014nonlinear,medvedev2018continuum,kuehn2019power,gkogkas2021continuum}.\\

A well-established approach to studying the qualitative behaviour of a dynamical system consists in analyzing the linear stability of its steady states (or equilibrium points). In particular, the dynamics in a neighbourhood of an equilibrium is determined by the associated local invariant manifolds, namely the stable, unstable, and center manifolds. For finite-dimensional systems, these topics are treated extensively in classical textbooks; we refer, for instance, to \cite{perko2013differential}. For partial differential equations, the corresponding theory is also well-developed; see, for example, \cite{sell2002dynamics, henry2006geometric, bates1998existence}.
It is straightforward to observe that all trajectories of the system lie in a hyperplane, since the mean phase
\begin{equation}
    \phantomsection
    \Theta\coloneqq\frac{1}{N}\sum_{i=1}^N\theta_i(t)
    \label{mean_phase}
\end{equation}
is a first integral of the system. In \cite{maistrenko2005chaotic,maistrenko2005desynchronization,popovych2005phase}, it was shown that if the natural frequencies are symmetrically distributed, namely $\omega_i=-\omega_{N-i+1}$, then the Kuramoto model \eqref{KM} admits an invariant manifold given by $\mathcal{M}=\{\theta_i=-\theta_{N-i+1}\},$ which is an $[N/2]$-dimensional torus. Under additional assumptions on the natural frequencies and for small coupling strength $K$, the stability of $\mathcal{M}$ was analyzed in \cite{chiba2009stability}. Yet, the analysis of all invariant manifolds of Kuramoto models is far from being complete. One of the main contributions of the present work is the explicit parametrization of the \textit{global} unstable manifold $W^\u(\theta^N_q)$, where $\theta^N_q$ is an element of the family of incoherent equilibria, given by
$$\theta^N_q\coloneqq\left(\frac{\pi(1-N)}{N}+q,\ldots,\frac{\pi(2i-1-N)}{N}+q,\ldots,\frac{\pi(N-1)}{N}+q\right)\in\T^N\qquad \forall q\in\T.$$

On the other hand, when considering the mean-field limit equation associated with a particle system, one may study the invariant manifolds of the corresponding limit PDE. In this context, a particularly relevant manifold is the so-called Ott--Antonsen (OA) manifold $\OA$. The OA manifold was introduced in the seminal work \cite{ott2008low} and has since been extensively studied; see, for example, \cite{tyulkina2018dynamics,ott2009long,engelbrecht2020ott,martens2009exact}. It provides a remarkable finite-dimensional reduction of the mean-field dynamics for large ensembles of coupled phase oscillators with a suitable interaction kernel. In particular, for the PDE \eqref{MFL}, the manifold $\OA$ can be written as
\begin{equation}\label{OA_Man}
    \OA=\left\{f_{\alpha,\beta}(\theta)=\frac{1}{2\pi}\frac{1-\beta^2}{1-2\beta\cos(\alpha+\theta)+\beta^2}\;\bigg\vert\;\beta\in[0,1),\;\alpha\in[-\pi,\pi]\right\}
\end{equation}
and the dynamics restricted to it reduces to a finite-dimensional system of ODEs given by
\begin{equation}\label{red_dyn_OA}
    \begin{cases}
        \dot{\beta}=\frac{K}{2}\beta(1-\beta^2),\\
        \dot{\alpha}=0.
    \end{cases}
\end{equation}
These expressions can be derived by following the same strategy as in \cite{ott2008low} for non-identical oscillators. We recall that the elements of $\OA$ are probability densities on $[0,2\pi]$ and that
$$W^\u\left(\frac{1}{2\pi}\right)=\OA,$$
see Appendix \ref{App_C} for further details.\\

In the recent work \cite{kuehn2025mean}, we introduced a transformation that allows one to construct solutions of \eqref{CL} starting from solutions of \eqref{MFL}. This transformation relies on the pseudo-inverse of the cumulative distribution function associated with a measure, namely
$$F^{-1}_\mu(\xi)=\inf\left\{x\in\R\;\bigg\vert\; F_\mu(x)\geq\xi\right\}.$$
Applying this transformation to the incoherent equilibrium $\frac{1}{2\pi}$ of \eqref{MFL} yields the family of incoherent equilibria
$$y_q(\xi)\coloneqq2\pi\xi+q$$
for \eqref{CL}, with $q\in\T$ and $\xi\in[0,1]$. In the same work, by means of this transformation, we also identified the analogue of the manifold $\OA$ for \eqref{CL}, namely
\begin{equation}\label{CL_OA_Man}
     W^\u(y_q(\xi))=\left\{g_{\alpha,\beta}(\xi)=F^{-1}_{\alpha,\beta}\left(\xi+C(\alpha,\beta)+\frac{q}{2\pi}\right)\bigg\vert\;\beta\in[0,1),\;\alpha\in[-\pi,\pi]\right\}
\end{equation}
with
$$C(\alpha,\beta)=-\frac{1}{\pi}\arctan\left(\frac{\beta\sin(\alpha)}{1-\beta\cos(\alpha)}\right)$$
and
\begin{equation}\label{inv_CDF_1}
     F^{-1}_{\alpha,\beta}(\xi)=\begin{cases}
        G_{\alpha,\beta}(\xi)\qquad\qquad\; \text{if}\;\xi\in[0,c(\alpha,\beta)]\\
        G_{\alpha,\beta}(\xi)+2\pi\qquad \text{if}\;\xi\in[c(\alpha,\beta),1],
    \end{cases}
\end{equation}
where
\begin{equation}\label{inv_CDF_2}
    G_{\alpha,\beta}(\xi)\coloneqq2\arctan\left(\frac{1-\beta}{1+\beta}\tan\left(\pi\xi+\arctan\left(\frac{1+\beta}{1-\beta}\tan\left(\frac{\alpha}{2}\right)\right)\right)\right)-\alpha
\end{equation}
and
$$c(\alpha,\beta)\coloneqq\frac{1}{2}-\frac{1}{\pi}\arctan\left(\frac{1+\beta}{1-\beta}\tan\left(\frac{\alpha}{2}\right)\right).$$
In particular, the reduced dynamics on $W^\u(y_q(\xi))$ coincides with \eqref{red_dyn_OA}.\\

The main contribution of this paper is to establish the convergence of $W^\u(\theta^N_q)$ towards $\OA$ and $W^\u(y_q(\xi))$, with respect to the Hausdorff distance induced, respectively, by the $p$-Wasserstein distance and the $L^\infty$ distance. To the best of the authors’ knowledge, this is the first work proving the convergence of a sequence of invariant manifolds associated with a ``\textit{natural}'' particle system towards an invariant manifold of its mean-field limit. By the term ``\textit{natural}'', we refer to particle systems that are canonically studied in the literature and from which limit equations are derived as $N\to\infty$.\\

Furthermore, the convergence of the invariant manifolds is complemented by a uniform-in-time mean-field limit for trajectories lying on $W^u(\theta_q^N)$. In particular, the corresponding approximation error remains uniformly controlled for all times, in contrast with the classical mean-field theory, where the error typically grows exponentially in time.\\

The paper is organized as follows. In Section \ref{Spectr_Sec}, we perform a complete spectral analysis of all equilibria of \eqref{KMH}, thereby filling a gap in the literature left by \cite{mirollo2005spectrum}. In Section \ref{main_sec}, we provide an explicit parametrization of the global unstable manifold $W^\u(\theta^N_q)$. In Section \ref{conn_sec}, we establish the convergence of $W^\u(\theta^N_q)$ towards $\OA$ and $W^\u(y_q(\xi))$, together with an explicit linear rate of convergence. Finally, in Section \ref{concl}, we summarize our results and discuss several directions for future research. Auxiliary formulas and additional technical details are collected in Appendix \ref{APP}.
\end{section}

%% file: Spectral_Analysis.tex
\begin{section}{Spectral Analysis}\label{Spectr_Sec}\noindent

The goal of this section is to provide a complete spectral analysis of all equilibria of \eqref{KMH}. This consists in determining all eigenvalues, together with their multiplicities and associated eigenspaces, for every equilibrium point $\theta^\star$. This complements the analysis of \cite{mirollo2005spectrum}, since in that work the authors considered only the inhomogeneous Kuramoto model and explicitly used the presence of two distinct natural frequencies $\omega_i\neq\omega_j$ in their proof.

In what follows, we use the following notation. The homogeneous Kuramoto vector field is denoted by $V(\theta)=(V_1(\theta),\ldots,V_N(\theta))$, where
$$V_i(\theta)=\frac{K}{N}\sum_{j=1}^N\sin(\theta_j-\theta_i)$$
for all $i=1,\ldots,N$ and $\theta\in\T^N$. Moreover, we denote by $\e_1,\ldots,\e_N$ the elements of the canonical basis of $\R^N$, and by $\underline{1}=\{1\}^N$ the $N$-dimensional vector whose entries are all equal to $1$. For $\Tilde{N},\Tilde{M}\in\{1,\ldots,N\}$, we write $\mathbbm{1}_{\Tilde{N},\Tilde{M}}=\{1\}^{\Tilde{N}\times \Tilde{M}}$ for the matrix whose entries are all equal to $1$, and $\mathbb{I}_{\Tilde{N}}\in\R^{\Tilde{N}\times \Tilde{N}}$ for the identity matrix. We suppress the index whenever it is clear from the context the dimension of the matrices.

We know from \cite[Proposition 5]{nguyen2023equilibria} that $\theta^\star\in\T^N$ is an equilibrium point of \eqref{KMH} if and only if $\theta^\star$ satisfies one of the following conditions:
\begin{description}
    \item[\textbf{A})\label{A}] the components $\theta^\star_i$ differ by integer multiples of $\pi$,
    \item[\textbf{B})\label{B}] $\sum_{j=1}^Ne^{\i\theta^\star_j}=0$, where $\i=\sqrt{-1}$.
\end{description}
With this in mind, we introduce the following classification of equilibrium points.

\begin{definition}
Given $\theta^\star\in\T^N$, we say that it is:
    \begin{description}
        \item[i)\label{i}] a \textit{coherent equilibrium} if there exists $q\in\T$ such that $\theta^\star=(q,\ldots,q)$;
        \item[ii)\label{ii}] a \textit{$J$-partially coherent equilibrium} if there exist $q\in\T$ and $J=1,\ldots,\lfloor N/2\rfloor$ such that
        $$\theta^\star=\left(\underbrace{q+\pi,\ldots,q+\pi}_{J},q,\ldots,q\right);$$
        \item[iii)\label{iii}] an \textit{incoherent equilibrium} if there exists $q\in\T$ such that
        $$\theta^\star=\left(\frac{\pi}{N}+q,\ldots,\frac{\pi(2i-1)}{N}+q,\ldots,\pi\frac{2N-1}{N}+q\right);$$
        \item[iv)\label{iv}] a \textit{general incoherent equilibrium} if $\theta^\star$ satisfies \nameref{B}.
    \end{description}
\end{definition}

We observe that this is simply another way of classifying the equilibria listed in \nameref{A} and \nameref{B}. Moreover, it is sufficient to study the stability of the $J$-partially coherent equilibria in order to analyze all equilibria of type \nameref{A}. Indeed, up to permutations, which are allowed since all particles are indistinguishable, and up to periodicity, one obtains all equilibrium points for which the components $\theta^\star_i$ differ by integer multiples of $\pi$. Furthermore, it is easy to see that the family of general incoherent equilibria contains all incoherent equilibria and some of the $J$-partially coherent equilibria.

\begin{rk}
    As already mentioned, it is known that the coherent equilibrium is locally stable; see \cite{arenas2008synchronization}. Moreover, it was shown in \cite{taylor2012there} that for \eqref{KMH} there are no other stable equilibrium points. In the same paper, the author proved that the same statement holds for homogeneous Kuramoto models with a positive adjacency matrix $\mathcal{A}$ whose node degrees are at least $\mu(N-1)$, with $\mu=0.9395$. In general, there exists a critical connectivity value $\mu_c$ such that if $\mu>\mu_c$, then the coherent equilibrium attracts almost every point of the system. It is conjectured that $\mu_c=0.75$. In recent years, the following results have been obtained in this direction: in \cite{ling2019landscape}, it was proved that $\mu_c\leq0.7929$; this bound was first refined in \cite{lu2020synchronization} to $\mu\leq0.7889$, and then in \cite{kassabov2021sufficiently} to $\mu_c\leq0.75$; finally, in \cite{yoneda2021lower}, the lower bound $\mu_c>0.6838$ was established.
\end{rk}

It is straightforward to observe that the Jacobian matrix associated with $V$ is given by
\begin{equation}\label{gen_jacobian}
    DV(\theta)_{i j}= \frac{K}{N}\begin{cases}\displaystyle-\sum_{\substack{k=1 \\ k \neq i}}^N \cos \left(\theta_k-\theta_i\right) & \text { if } i=j \\ \cos \left(\theta_j-\theta_i\right) & \text { if } i \neq j.\end{cases}
\end{equation}
We are now ready to state the following result.

\begin{proposition}\label{prop_spectral}
    For the homogeneous Kuramoto model with all-to-all coupling \eqref{KMH}, the following statements hold:
    \begin{description}
        \item[i)] A coherent equilibrium $\theta^\star$ has
        \begin{itemize}
            \item[$\bullet$] $\lambda_1=0$ as an eigenvalue, with associated eigenspace given by $\operatorname{span}\{\underline{1}\}$;
            \item[$\bullet$] $\lambda_2=-K$ as an eigenvalue, with associated eigenspace given by $\operatorname{span}\{\e_2-\e_1,\cdots,\e_j-\e_1,\cdots,\e_N-\e_1\}$.
        \end{itemize}

        \item[ii)] A $1$-partially coherent equilibrium $\theta^\star$ has
        \begin{itemize}
            \item[$\bullet$] $\lambda_1=0$ as an eigenvalue, with associated eigenspace given by $\operatorname{span}\{\underline{1}\}$;
            \item[$\bullet$] $\lambda_2=K$ as an eigenvalue, with associated eigenspace given by $\operatorname{span}\{\underline{1}-N\e_1\}$;
            \item[$\bullet$] $\lambda_3=K\frac{2-N}{N}$ as an eigenvalue, with associated eigenspace given by $\operatorname{span}\{\e_3-\e_2,\cdots,\e_j-\e_2,\cdots,\e_N-\e_2\}$.
        \end{itemize}

        \item[iii)] A $J$-partially coherent equilibrium $\theta^\star$, with $J=2,\ldots,\lfloor N/2\rfloor$ when $N$ is odd or $J=2,\ldots,\lfloor N/2\rfloor-1$ when $N$ is even, has
        \begin{itemize}
            \item[$\bullet$] $\lambda_1=0$ as an eigenvalue, with associated eigenspace given by $\operatorname{span}\{\underline{1}\}$;
            \item[$\bullet$] $\lambda_2=K$ as an eigenvalue, with associated eigenspace given by $\operatorname{span}\{\underline{1}-N/J(\e_1+\cdots+\e_J)\}$;
            \item[$\bullet$] $\lambda_3=K\frac{N-2J}{N}$ as an eigenvalue, with associated eigenspace given by $\operatorname{span}\{\e_2-\e_1,\cdots,\e_j-\e_1,\cdots,\e_J-\e_1\}$;
            \item[$\bullet$] $\lambda_4=K\frac{2J-N}{N}$ as an eigenvalue, with associated eigenspace given by $\operatorname{span}\{\e_{J+2}-\e_{J+1},\cdots,\e_{j}-\e_{J+1},\cdots,\e_{N}-\e_{J+1}\}$.
        \end{itemize}

        \item[iv)] A $\lfloor N/2\rfloor$-partially coherent equilibrium $\theta^\star$ with $N$ even has
        \begin{itemize}
            \item[$\bullet$] $\lambda_1=0$ as an eigenvalue, with associated eigenspace given by $\operatorname{span}\{\e_2-\e_1,\cdots,\e_{\lfloor N/2\rfloor}-\e_1,\e_{\lfloor N/2\rfloor+1}+\e_1,\cdots,\e_N+\e_1\}$;
            \item[$\bullet$] $\lambda_2=K$ as an eigenvalue, with associated eigenspace given by $\operatorname{span}\{-\e_1-\cdots-\e_{\lfloor N/2\rfloor}+\e_{\lfloor N/2\rfloor+1}+\cdots+\e_{N}\}$.
        \end{itemize}

        \item[v)] A general incoherent equilibrium $\theta^\star$ such that there exist two indices $i_0$ and $i_1$ satisfying $\theta^\star_{i_0}-\theta^\star_{i_1}\notin\pi\Z$ has
        \begin{itemize}
            \item[$\bullet$] $\lambda_1=0$ as an eigenvalue, with associated eigenspace given by an $(N-2)$-dimensional subspace;
            \item[$\bullet$] 
            $$\lambda_{2,3}=\frac{K}{2}\left(1\pm\frac{1}{N}\left|\sum_{j=1}^N\exp^{\i2\theta^\star_j}\right|\right),$$
            and both eigenvalues have a one-dimensional associated eigenspace.
        \end{itemize}
    \end{description}
\end{proposition}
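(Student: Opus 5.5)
The unifying observation is that the Jacobian \eqref{gen_jacobian} can be written compactly. Set $z_j=e^{\i\theta_j}$ and $r e^{\i\psi}=\frac1N\sum_j z_j$. Then
$$DV(\theta)=\frac{K}{N}\left(C-\operatorname{diag}(C\underline{1})\right),\qquad C_{ij}=\cos(\theta_j-\theta_i)=\operatorname{Re}(\bar z_i z_j).$$
Hence $C=cc^T+ss^T$ with $c=(\cos\theta_j)_j$ and $s=(\sin\theta_j)_j$, so $C$ has rank at most $2$. The diagonal part is $(C\underline{1})_i=N r\cos(\psi-\theta_i)$. We also always have $DV\,\underline{1}=0$, which gives the zero eigenvalue in every case. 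My plan is to exploit this rank-two-plus-diagonal structure case by case.

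\textbf{Equilibria of type A.} For the equilibria in i)--iv) every $\theta_j$ equals $q$ or $q+\pi$. Write $\sigma_j=\pm1$ for the sign of $\cos(\theta_j-q)$. Then $c=\cos q\,\sigma$ and $s=\sin q\,\sigma$, so $C=\sigma\sigma^T$. The diagonal is $(C\underline 1)_i=\sigma_i(N-2J)$, because $\sum_j\sigma_j=N-2J$. Therefore
$$DV=\frac KN\left(\sigma\sigma^T-(N-2J)\operatorname{diag}(\sigma)\right).$$
On vectors $v$ orthogonal to $\sigma$ this acts as $-\frac KN(N-2J)\sigma_i v_i$. This yields the eigenvalue $K\frac{2J-N}{N}$ on difference vectors inside the block with $\sigma=+1$, and $K\frac{N-2J}{N}$ on difference vectors inside the block with $\sigma=-1$. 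These are exactly $\lambda_3$ and $\lambda_4$ in iii), and $\lambda_2$ in i) when $J=0$. The remaining two-dimensional invariant subspace is spanned by the two block indicators $\mathbf 1_-$ and $\mathbf 1_+$. On it I compute a $2\times2$ matrix whose eigenvalues are $0$ (eigenvector $\underline 1$) and trace$=K$. This gives $\lambda_2=K$ with eigenvector $\underline1-\frac NJ(\e_1+\dots+\e_J)$. Checking orthogonality of this vector to $\sigma$ is a one-line computation.

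The special cases are read off from the same formula. For $J=1$ the $\sigma=-1$ block has no differences, which gives ii). For $N=2J$ the diagonal vanishes, so $DV=\frac KN\sigma\sigma^T$. Its kernel is $\sigma^\perp$, which is the stated $(N-1)$-dimensional span, and its nonzero eigenvalue is $K$ with eigenvector $\sigma$, which gives iv). Multiplicities follow because the dimensions add up to $N$.

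\textbf{Case v), $r=0$.} Here the diagonal vanishes and $DV=\frac KN C=\frac KN(cc^T+ss^T)$, which is symmetric positive semidefinite. Its nonzero eigenvalues are those of the Gram matrix
$$\frac KN\begin{pmatrix}c\cdot c& c\cdot s\\ c\cdot s& s\cdot s\end{pmatrix}.$$
Using $\sum\cos^2=\frac N2+\frac12\operatorname{Re}\sum e^{2\i\theta_j}$, the analogous identity for $\sum\sin^2$, and $\sum\cos\sin=\frac12\operatorname{Im}\sum e^{2\i\theta_j}$, this matrix equals
$$\frac K2\mathbb I+\frac K{2N}\begin{pmatrix}\operatorname{Re}S&\operatorname{Im}S\\ \operatorname{Im}S&-\operatorname{Re}S\end{pmatrix},\qquad S=\sum_j e^{2\i\theta_j}.$$
Its eigenvalues are therefore $\frac K2(1\pm|S|/N)$.

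The hypothesis that some $\theta_{i_0}-\theta_{i_1}\notin\pi\Z$ means $c$ and $s$ are linearly independent. So $C$ has rank exactly $2$, the kernel has dimension $N-2$, and both eigenvalues are positive; in particular $|S|<N$. This is the step needing most care. If $|S|=0$ the two eigenvalues coincide, giving a single eigenvalue $K/2$ with a two-dimensional eigenspace. So the claim of two one-dimensional eigenspaces should be read with that caveat, or justified as generic. I would state the multiplicity count accordingly, via the spectral theorem for the symmetric matrix.
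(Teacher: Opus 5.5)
Your proposal is correct and follows essentially the paper's route. For i)--iv), the paper writes exactly your matrix $\frac{K}{N}\bigl(\sigma\sigma^T-(N-2J)\operatorname{diag}(\sigma)\bigr)$ in block form and calls the eigen-analysis straightforward. For v), it uses $DV(\theta^\star)\underline{v}=\frac{K}{N}\bigl(\underline{c}\,(\underline{c}\cdot\underline{v})+\underline{s}\,(\underline{s}\cdot\underline{v})\bigr)$ and reduces to the same Gram matrix. Your caveat about $S=0$ is right: there $K/2$ is a single eigenvalue with a two-dimensional eigenspace, which is what Remark~\ref{rk_eigenspace} says for the incoherent equilibrium.

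One slip to fix: $\underline{1}-\frac{N}{J}(\e_1+\dots+\e_J)$ is not orthogonal to $\sigma$, since the inner product is $2(N-J)$. The correct one-line check is orthogonality to $\underline{1}$. Since $DV$ is symmetric and $\underline{1}$ is its $0$-eigenvector inside $\operatorname{span}\{\mathbf{1}_-,\mathbf{1}_+\}$, that orthogonality identifies the vector as the $K$-eigenvector.
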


\begin{proof}
    The proof of points \textbf{i)}-\textbf{iv)} is essentially a direct computation. Indeed, for \textbf{i)} one has
    $$DV(\theta^\star)=\frac{K}{N}(\mathbbm{1}-N\mathbb{I}),$$
    while for \textbf{ii)}-\textbf{iv)}, with $N\geq2$ and for all $J=1,\cdots,\left\lfloor\frac{N}{2}\right\rfloor$, one has
$$DV(\theta^\star)=\frac{K}{N}\left(
\begin{array}{c|c}
\begin{array}{c}
\mathbbm{1}_J+(N-2J)\mathbb{I}_J\\
\end{array}
&
\begin{array}{c}
-\mathbbm{1}\\
\end{array}
\\
\hline
\begin{array}{c}
-\mathbbm{1}\\
\end{array}
&
\begin{array}{c}
\mathbbm{1}_{N-J}+(2J-N)\mathbb{I}_{N-J}\\
\end{array}
\end{array}
\right).$$
It is therefore straightforward to verify that these matrices have the stated eigenvalues and associated eigenspaces. The last case is more interesting. Indeed, for \textbf{v)}, we cannot explicitly compute the Jacobian in a simple form. Nevertheless, we can compute its action on arbitrary vectors in $\R^N$. In what follows, we denote
    $$c_i\coloneqq\cos(\theta^\star_i),\qquad s_i\coloneqq\sin(\theta^\star_i),$$
    and by $\underline{c}$ and $\underline{s}$ the vectors whose entries are, respectively, the $c_i$s and the $s_i$s. We now start by showing that $\underline{c}$ and $\underline{s}$ are linearly independent. Indeed, if
    $$a\underline{c}+b\underline{s}=\underline{0}$$
    with $a,b\in\R$, then necessarily $a=b=0$. This follows from the fact that, in particular, one would have
    $$\begin{cases}
        ac_{i_0}+bs_{i_0}=0,\\
        ac_{i_1}+bs_{i_1}=0,
    \end{cases}$$
    which admits no solution other than $a=b=0$, since $\theta^\star_{i_0}-\theta^\star_{i_1}\notin\pi\Z$. Taking a general vector $\underline{v}\in\R^N$, we obtain
    \begin{align*}
        \left(DV(\theta^\star)\underline{v}\right)_i
        &=\frac{K}{N}\sum_{j=1}^N\cos(\theta^\star_j-\theta^\star_i)(v_j-v_i)\\
        &=\frac{K}{N}\sum_{j=1}^N(c_jc_i+s_js_i)(v_j-v_i)\\
        &=\frac{K}{N}(c_i\underline{c}\cdot\underline{v}+s_i\underline{s}\cdot\underline{v}),
        \end{align*}
where we have used $\underline{c}\cdot\underline{1}=\underline{s}\cdot\underline{1}=0$, which follows from \nameref{B}. From the latter expression and the linear independence of $\underline{c}$ and $\underline{s}$, we already see that a vector $\underline{v}$ belongs to $\operatorname{ker}(DV(\theta^\star))$ if and only if $\underline{c}\cdot\underline{v}=\underline{s}\cdot\underline{v}=0$. We can then complete $\{\underline{c},\underline{s}\}$ to a basis of $\R^N$ by selecting vectors $\e_3,\ldots,\e_N$ that are mutually orthogonal and orthogonal to both $\underline{c}$ and $\underline{s}$. In these new coordinates, the Jacobian can be written as
$$DV(\theta^\star)=\frac{K}{N}\left(\begin{array}{ccccc}
\|\underline{c}\|^2 & \underline{c}\cdot\underline{s} & 0 & \cdots & 0 \\
\underline{c}\cdot\underline{s} & \|\underline{s}\|^2 & 0 & \cdots & 0 \\
0 & 0 & 0 & \cdots & 0 \\
\vdots & \vdots & \vdots & \ddots & \vdots \\
0 & 0 & 0 & \cdots & 0
\end{array}\right),$$
from which the statement of the lemma follows.
\end{proof}

\begin{rk}\label{rk_eigenspace}
    We observe that $\lambda_2,\lambda_3\in(0,K)$, where the endpoints are excluded because the conditions
$$\frac{1}{N}\left|\sum_{j=1}^N\exp^{\i2\theta^\star_j}\right|=1$$
and \nameref{B} are incompatible. Moreover, we remark that when $\theta^\star$ is the incoherent solution, one obtains
$$\lambda_2=\lambda_3=\frac{K}{2}.$$
For this particular incoherent equilibrium, an alternative proof based on circulant matrices is provided in Appendix \ref{App_B}. Moreover, for every incoherent equilibrium, the eigenspace associated with $\lambda_2=K/2$ is spanned by
$$E^\u(\theta^N_q)=\operatorname{span}\left\{\left(\begin{array}{c}
0 \\
\sin(\frac{2\pi}{N})\\
\vdots \\
\sin(\frac{2\pi}{N}j)\\
\vdots \\
\sin(\frac{2\pi}{N}(N-1))
\end{array}\right),\;\left(\begin{array}{c}
1 \\
\cos(\frac{2\pi}{N})\\
\vdots \\
\cos(\frac{2\pi}{N}j)\\
\vdots \\
\cos(\frac{2\pi}{N}(N-1))
\end{array}\right)\right\}.$$
\end{rk}


\end{section}

%% file: Unstable_Man.tex
\begin{section}{Explicit parametrization of $W^\u(\theta^N_q)$}\label{main_sec}
We are now interested in characterizing the unstable manifold $W^\u(\theta^N_q)$ for every $q\in\T$. We first derive a discrete analogue of \cite[Lemma 2.5]{kuehn2025mean}, which describes how the vector field in \eqref{KMH} acts on our ansatz, namely a discretized version of \eqref{CL_OA_Man}. To this end, for every $(\alpha,\beta,A,\xi)\in[-\pi,\pi]\times[0,1)\times\R\times[0,1]$ we define the function
\begin{equation}\label{continuous_ansatz}
    \phi(\alpha,\beta,A,\xi)\coloneqq2\arctan\left(\frac{1-\beta}{1+\beta}\tan\left(\pi\xi+A\right)\right)-\alpha+2\pi k
\end{equation}
where $k$ is chosen such that $\pi\xi+A\in[\pi(k-1/2),\pi(k+1/2)]$. We are now ready to state the following technical lemma.
\begin{lemma}\label{discrete_lemma}
For every $\alpha\in[-\pi,\pi]$, $\beta\in[0,1)$, and $A\in\R$, define, for every $i=1,\ldots,N$,
\begin{equation}\label{fin_N_ansatz}
    \phi_i(\alpha,\beta,A)\coloneqq\phi(\alpha,\beta,A,\xi_i)
\end{equation}
with
$$\xi_i\coloneqq\frac{2i-1}{2N}.$$
Then, for every $i=1,\ldots,N$, one has
\begin{equation}\label{discrete_formula}
    \frac{1}{N}\sum_{j=1}^N\sin\left(\phi_j-\phi_i\right)=-\beta\sin(\phi_i+\alpha)B_N(A,\beta)-\beta\cos(\phi_i+\alpha)D_N(A,\beta),
\end{equation}
where
$$B_N(A,\beta)\coloneqq\frac{1+\beta^{2N-2}+\cos(2AN)(-\beta)^{N-2}(1+\beta^2)}{1+\beta^{2N}+2(-\beta)^N\cos(2AN)}$$
and
$$D_N(A,\beta)\coloneqq\frac{\sin(2AN)(-\beta)^{N-2}(\beta^2-1)}{1+\beta^{2N}+2(-\beta)^N\cos(2AN)}.$$
\end{lemma}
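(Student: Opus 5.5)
The natural route is to pass to complex exponentials and use the Möbius structure of the ansatz. Set $\psi_j\coloneqq\phi_j+\alpha$ and $x_j\coloneqq\pi\xi_j+A$. By definition of $\phi$ (the $2\pi k$ correction only fixes the branch, so it does not affect $e^{\i\psi_j}$), we have $\tan(\psi_j/2)=r\tan x_j$ with $r=\frac{1-\beta}{1+\beta}$. Hence
$$e^{\i\psi_j}=\frac{e^{2\i x_j}+\beta}{1+\beta e^{2\i x_j}},$$
which one checks via $e^{\i\psi}=\frac{1+\i\tan(\psi/2)}{1-\i\tan(\psi/2)}$. This is the discrete image of equally spaced points under a disc automorphism. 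Writing $z_j\coloneqq e^{2\i x_j}=e^{2\i A}\omega^{j-1/2}$ with $\omega=e^{2\pi\i/N}$, the points $z_j$ are exactly the $N$-th roots of $w\coloneqq e^{2\i AN}(-1)$, since $\omega^{-N/2}=-1$. Then
$$\frac1N\sum_j\sin(\phi_j-\phi_i)=\operatorname{Im}\!\Big(e^{-\i\psi_i}\,Z_N\Big),\qquad Z_N\coloneqq\frac1N\sum_j\frac{z_j+\beta}{1+\beta z_j},$$
so the whole task reduces to computing the discrete order parameter $Z_N$ in closed form.

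To compute $Z_N$, I would write $\frac{z+\beta}{1+\beta z}=\frac1\beta-\frac{1-\beta^2}{\beta}\frac{1}{1+\beta z}$, so one needs $\frac1N\sum_j\frac{1}{1+\beta z_j}$. For this I would use the root-of-unity identity
$$\frac1N\sum_{z^N=w}\frac{1}{1-uz}=\frac{1}{1-u^Nw},$$
obtained by expanding geometrically for $|u|<1$ and keeping only powers divisible by $N$. Taking $u=-\beta$ and $w=-e^{2\i AN}$ gives
$$\frac{1}{1-(-\beta)^N(-e^{2\i AN})}=\frac{1}{1+(-\beta)^Ne^{2\i AN}}.$$
Therefore
$$Z_N=\frac1\beta\Big(1-\frac{1-\beta^2}{1+(-\beta)^N E}\Big)=\frac{\beta^2+(-\beta)^N E}{\beta\,(1+(-\beta)^N E)},\qquad E\coloneqq e^{2\i AN}.$$
Multiplying numerator and denominator by $1+(-\beta)^N\bar E$ produces the denominator $1+\beta^{2N}+2(-\beta)^N\cos(2AN)$, which is the one appearing in $B_N$ and $D_N$. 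The numerator is $\beta^2+\beta^{2N}+(-\beta)^N(E+\beta^2\bar E)$. Dividing by $\beta^2$ should give $Z_N=\beta(B_N-\i D_N)$ or a conjugate variant. The check is that the real part is $1+\beta^{2N-2}+(-\beta)^{N-2}(1+\beta^2)\cos(2AN)$ and the imaginary part is $(-\beta)^{N-2}(1-\beta^2)\sin(2AN)$, which matches $-D_N$ up to sign.

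Finally, $\operatorname{Im}\big(e^{-\i\psi_i}\beta(B_N-\i D_N)\big)=-\beta\sin\psi_i\,B_N-\beta\cos\psi_i\,D_N$, which is \eqref{discrete_formula}. The main obstacle is bookkeeping rather than ideas: fixing the correct sign $w=-e^{2\i AN}$ from the half-integer offset $\xi_i=\frac{2i-1}{2N}$, and keeping the conjugation and sign conventions consistent so that $D_N$ comes out with exactly the stated sign. The case $\beta=0$ needs a separate one-line check: there $\phi_i=2x_i-\alpha$ is equally spaced, so the sum vanishes, and the right-hand side also vanishes. The branch choice $k$ in \eqref{continuous_ansatz} is irrelevant throughout, since only $e^{\i\phi_j}$ enters.
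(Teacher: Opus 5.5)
Your proposal is correct and follows essentially the paper's route. Both arguments reduce the sum to $\operatorname{Im}(e^{-\i\psi_i}Z_N)$ using the M\"obius form $e^{\i\psi_j}=(z_j+\beta)/(1+\beta z_j)$, split off the constant $1/\beta$, and evaluate $\sum_j(1+\beta z_j)^{-1}$ over the roots of $z^N+e^{2\i AN}$ to get $Z_N=\beta(B_N-\i D_N)$. The differences are minor: you evaluate that sum with a geometric-series roots-of-unity filter, where the paper uses the logarithmic derivative $P'/P$ of $P(z)=z^N+C^N$ at $z=-1/\beta$, and you check $\beta=0$ separately, a case the paper's division by $\beta$ passes over.
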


\begin{proof}
    We begin by observing that
    $$\frac{1}{N}\sum_{j=1}^N\sin\left(\phi_j-\phi_i\right)=\textnormal{Im}\left(e^{-\i \phi_i} S_N\right),$$
    where
    $$S_N\coloneqq\frac{1}{N}\sum_{j=1}^Ne^{\i \phi_j}.$$
    By using the identity
    $$e^{\i 2\arctan(y)}=\frac{1+\i y}{1-\i y} \qquad\forall y\in\R$$
    and its equivalent formulation
    $$\i \tan\left(y\right)=\frac{e^{2\i y}-1}{e^{2\i y}+1}\qquad\forall y\in\R,$$
    we obtain
    \begin{equation}\label{exp_expression}
        e^{\i \phi_i}=e^{-\i \alpha}\frac{1+\i\frac{1-\beta}{1+\beta}\tan\left(\pi\xi_i+A\right)}{1-\i\frac{1-\beta}{1+\beta}\tan\left(\pi\xi_i+A\right)}=e^{-\i \alpha}\frac{Ce^{2\i \pi\xi_i}+\beta}{1+C\beta e^{2\i\pi\xi_i}},  
    \end{equation}
    where $C\coloneqq e^{2\i A}$. It remains to compute
    $$S_{N,1}\coloneqq\sum_{j=1}^N \frac{1}{1+C\beta \eta_j}$$
    and
    $$S_{N,2}\coloneqq\sum_{j=1}^N \frac{\eta_j}{1+C\beta \eta_j},$$
    where $\eta_j\coloneqq e^{2\pi\i\xi_j}$, so that $\eta_j^N=-1$ for every $j=1,\ldots,N$. In fact, it is sufficient to compute $S_{N,1}$, since
    \begin{equation}\label{S_N12}
        S_{N,2}=\frac{N}{C\beta}-\frac{1}{C\beta}S_{N,1}.
    \end{equation}
    In particular, the points $C\eta_j^N$ are zeros of the complex polynomial
    $$P(z)\coloneqq z^N+C^N.$$
    Hence, by the fundamental theorem of algebra,
    $$\frac{Nz^{N-1}}{z^N+C^N}=\frac{P'(z)}{P(z)}=\sum_{j=1}^N\frac{1}{z-C\eta_j}\qquad\forall z\in\C.$$
    Choosing $z=-1/\beta$, the previous identity yields
    $$-\beta S_{N,1}=\frac{N(-1)^{1-N}\beta^{1-N}}{(-1)^{-N}\beta^{-N}+C^N}=N\frac{(-1)^{N-1}\beta}{(-1)^N+(\beta C)^N}=-N\frac{\beta}{1+(-\beta C)^N}.$$
    Consequently, using \eqref{S_N12}, we deduce that
    $$S_{N,2}=\frac{N}{C\beta}-\frac{N}{C\beta} \frac{1}{1+(-\beta C)^N}=\frac{N}{C\beta}\frac{(-\beta C)^N}{1+(-\beta C)^N}.$$
    By combining the previous identities, we obtain
    \begin{align*}
        S_N=\frac{e^{-\i \alpha}}{N}\Big(CS_{N,2}+\beta S_{N,1}\Big)
        &=e^{-\i \alpha}\left(\frac{(-\beta C)^N}{\beta(1+(-\beta C)^N)}+\frac{\beta}{1+(-\beta C)^N}\right)\\
        &=e^{-\i \alpha}\beta\left(\frac{1+C^N(-\beta)^{N-2}}{1+(-\beta C)^N}\right).
    \end{align*}
    By the definition of $C$, the last factor can be rewritten as
    \begin{align*}
        \frac{1+C^N(-\beta)^{N-2}}{1+(-\beta C)^N}
        &=\frac{\left(1+(-\beta)^Ne^{-2\i AN}\right)\left(1+(-\beta)^{N-2}e^{2\i AN}\right)}{\left(1+(-\beta)^Ne^{-2\i AN}\right)\left(1+(-\beta)^Ne^{2\i AN}\right)}\\
        &=\frac{1+\beta^{2N-2}+(-\beta)^Ne^{-2\i AN}+(-\beta)^{N-2}e^{2\i AN}}{1+\beta^{2N}+2(-\beta)^N\cos(2AN)}.
    \end{align*}
    Since the last quantity has been expressed as a linear combination of complex exponentials, its imaginary part can be computed explicitly. This gives
    \begin{align*}
        \frac{1}{N}\sum_{j=1}^N\sin(\phi_j-\phi_i)&=-\frac{\beta}{1+\beta^{2N}+2(-\beta)^N\cos(2AN)}\Big[\sin(\phi_i+\alpha)\left(1+\beta^{2N-2}\right)+\\
        &\hspace{1.9cm}+\sin(\phi_i+\alpha+2AN)(-\beta)^N+\sin(\phi_i+\alpha-2AN)(-\beta)^{N-2}\Big]=\\
        &\hspace{-2.5cm}=-\frac{\beta}{1+\beta^{2N}+2(-\beta)^N\cos(2AN)}\bigg[\sin(\phi_i+\alpha)\Big(1+\beta^{2N-2}+\cos(2AN)(-\beta)^{N-2}(1+\beta^2)\Big)+\\
        &\hspace{5.0cm}+\cos(\phi_i+\alpha)\sin(2AN)(-\beta)^{N-2}(\beta^2-1)\bigg]
    \end{align*}
    which concludes the proof.
\end{proof}

We remark that the addition of the factor $2\pi k$ plays no role in the previous proof. It is included only to ensure the continuity of the functions $\phi_i$. Before proceeding with the main theorem, we would like to prove a short technical lemma about the $\phi_i$, which will be used in the proof of the main theorem.
\begin{lemma}
    For every $N\in\N$, $\alpha\in[-\pi,\pi]$, $\beta\in[0,1)$ and $A\in\R$ consider the sequence given by $\big(\phi_i(\alpha,\beta,A)\big)_{i=1}^N$, with the functions $\phi_i(\cdot)$ defined as in \eqref{fin_N_ansatz}. Then for every $i=1,\ldots,N-1$ we have that
    \begin{equation}\label{phi_prop_1}
        \phi_i(\alpha,\beta,A)\leq\phi_{i+1}(\alpha,\beta,A)
    \end{equation}
    and
    \begin{equation}\label{phi_prop_2}
        0\leq\phi_N(\alpha,\beta,A)-\phi_1(\alpha,\beta,A)\leq2\pi
    \end{equation}
\end{lemma}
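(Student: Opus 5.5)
Both properties follow from a single fact: the map $\xi\mapsto\phi(\alpha,\beta,A,\xi)$ of \eqref{continuous_ansatz} is continuous and nondecreasing on $\R$, and it satisfies the quasi-periodicity relation
$$\phi(\alpha,\beta,A,\xi+1)=\phi(\alpha,\beta,A,\xi)+2\pi .$$
Once these two facts are in hand, \eqref{phi_prop_1} is immediate from $\xi_i<\xi_{i+1}$. Since $0<\xi_N-\xi_1=\frac{N-1}{N}<1$, monotonicity also gives
$$\phi_1\le\phi_N\le\phi(\alpha,\beta,A,\xi_1+1)=\phi_1+2\pi,$$
which is \eqref{phi_prop_2}.

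To prove the two facts I would set $u=\pi\xi+A$ and study
$$h(u)\coloneqq2\arctan\left(\frac{1-\beta}{1+\beta}\tan u\right)+2\pi k(u),$$
where $k(u)$ is the integer with $u\in[\pi(k-\tfrac12),\pi(k+\tfrac12)]$. Then $\phi=h(\pi\xi+A)-\alpha$, so it suffices to treat $h$.

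\textbf{Monotonicity on each branch.} On each open interval $(\pi(k-\tfrac12),\pi(k+\tfrac12))$ the function $h$ is smooth. Its derivative there is
$$h'(u)=\frac{2\gamma\sec^2 u}{1+\gamma^2\tan^2u}>0,\qquad \gamma=\frac{1-\beta}{1+\beta}\in(0,1].$$
The value of $h$ on this interval ranges over $(2\pi k-\pi,\,2\pi k+\pi)$.

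\textbf{Continuity at the breakpoints.} As $u\uparrow\pi(k+\tfrac12)$ we have $\tan u\to+\infty$, so $h\to\pi+2\pi k$. As $u\downarrow\pi(k+\tfrac12)$ we are in the next branch, where $\tan u\to-\infty$, so $h\to-\pi+2\pi(k+1)=\pi+2\pi k$. The two one-sided limits agree, so $h$ extends continuously across the breakpoint. I would also remark here that the definition of $k$ is ambiguous exactly at these points, and either choice gives the same value once we take this continuous limit; this is the only delicate point. Hence $h$ is continuous and nondecreasing on $\R$, in fact strictly increasing.

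\textbf{Quasi-periodicity.} The shift $\xi\mapsto\xi+1$ sends $u\mapsto u+\pi$. This leaves $\tan u$ unchanged and raises $k$ by one, so $h(u+\pi)=h(u)+2\pi$.

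I expect the only real obstacle to be careful bookkeeping of the branch index $k$ at the breakpoints, which the one-sided limit computation above handles. The degenerate case $\beta=0$ needs no separate treatment: then $\gamma=1$, $h(u)=2u$, and the argument applies verbatim.
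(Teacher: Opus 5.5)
Your proof is correct and follows the paper's route: monotonicity of $\xi\mapsto\phi(\alpha,\beta,A,\xi)$ gives \eqref{phi_prop_1}, and the exact gain of $2\pi$ when $\xi$ increases by one gives \eqref{phi_prop_2}. The paper compares with $\phi(\alpha,\beta,A,1)-\phi(\alpha,\beta,A,0)=2\pi$ where you use $\phi(\alpha,\beta,A,\xi_1+1)=\phi_1+2\pi$, and your branch-wise derivative together with the one-sided limits at $u\in\pi(\tfrac12+\Z)$ make explicit the monotonicity and continuity that the paper takes as immediate.
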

\begin{proof}
 The first inequality is immediate, indeed for fixed $\alpha$, $\beta$, and $A$, the map $\xi \mapsto \phi(\alpha,\beta,A,\xi)$ is monotone increasing. Hence, since $\xi_i\leq\xi_{i+1}$ for every $i=1,\ldots,N-1$, we obtain $\phi_i(\alpha,\beta,A)\leq\phi_{i+1}(\alpha,\beta,A).$ To prove the second inequality, note that
$$0=\phi_1(\alpha,\beta,A)-\phi_1(\alpha,\beta,A)\leq\phi_N(\alpha,\beta,A)-\phi_1(\alpha,\beta,A)\leq\phi(\alpha,\beta,A,1)-\phi(\alpha,\beta,A,0)=2\pi,$$ 
where we used \eqref{phi_prop_1}. This concludes the proof.
\end{proof}

Having established the previous lemma, we can now describe the local unstable manifold $W^u(\theta^N_q)$, as stated in the following theorem.

\begin{theorem}\label{main_theorem}
    Every element of the family of incoherent equilibria $\theta^N_q$ of \eqref{KMH}, with $q\in\mathbb{T}$ and $N\geq3$, admits an explicit expression for its two-dimensional unstable manifold, namely
    $$W^\u(\theta^N_q)=\left\{\theta\in\T^N\text{ s.t. } \theta_i=\phi_i(\alpha,\beta,A);\text{ }\frac{1}{N}\sum_{i=1}^N\theta_i=q\text{ }\bigg\vert\;\beta\in[0,1),\;\alpha\in[-\pi,\pi],A\in\left[-\frac{\pi}{2},\frac{\pi}{2}\right]\right\}.$$
    Moreover, $(\beta,\alpha,A)$ satisfy the following system of ODEs:
    \begin{equation}\label{3d-system}
    \begin{cases}
        \dot{\beta}=\frac{K}{2}\beta(1-\beta^2)B_N(A,\beta),\\
        \dot{\alpha}=\frac{K}{2}(1+\beta^2)D_N(A,\beta),\\
        \dot{A}=\frac{K}{4}(1-\beta^2)D_N(A,\beta).
    \end{cases}
    \end{equation}
\end{theorem}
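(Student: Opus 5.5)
The plan is to verify directly that the ansatz is invariant under \eqref{KMH} with the stated reduced dynamics, and then to identify the resulting invariant surface with $W^\u(\theta^N_q)$. Set $\theta_i(t)=\phi_i(\alpha(t),\beta(t),A(t))$ and abbreviate $u_i=\tan(\pi\xi_i+A)$ and $r=\frac{1-\beta}{1+\beta}$, so that $\phi_i+\alpha=2\arctan(ru_i)+2\pi k$. The term $2\pi k$ is locally constant and does not affect derivatives.

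\textbf{Step 1: derivatives of $\phi_i$.} We have $\partial_\alpha\phi_i=-1$, $\partial_\beta\phi_i=\frac{2r'u_i}{1+r^2u_i^2}$ and $\partial_A\phi_i=\frac{2r(1+u_i^2)}{1+r^2u_i^2}$. Write $s_i=\sin(\phi_i+\alpha)$ and $c_i=\cos(\phi_i+\alpha)$. From $s_i=\frac{2ru_i}{1+r^2u_i^2}$, $\frac{1}{1+r^2u_i^2}=\frac{1+c_i}{2}$ and $\frac{r^2u_i^2}{1+r^2u_i^2}=\frac{1-c_i}{2}$, we get
$$\partial_\beta\phi_i=\frac{r'}{r}s_i=-\frac{2}{1-\beta^2}s_i,\qquad \partial_A\phi_i=\Big(r+\frac1r\Big)+\Big(r-\frac1r\Big)c_i=\frac{2(1+\beta^2)}{1-\beta^2}-\frac{4\beta}{1-\beta^2}c_i .$$
Therefore $\dot\theta_i$ is an affine combination of $1$, $s_i$ and $c_i$ whose coefficients do not depend on $i$.

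\textbf{Step 2: matching with Lemma \ref{discrete_lemma}.} By Lemma \ref{discrete_lemma}, the vector field is $V_i=-K\beta B_N s_i-K\beta D_N c_i$. Equating the coefficients of $1$, $s_i$ and $c_i$ gives three scalar equations:
\begin{itemize}
\item $-\dot\alpha+\frac{2(1+\beta^2)}{1-\beta^2}\dot A=0$;
\item $-\frac{2}{1-\beta^2}\dot\beta=-K\beta B_N$;
\item $-\frac{4\beta}{1-\beta^2}\dot A=-K\beta D_N$.
\end{itemize}
Solving these yields exactly \eqref{3d-system}. Hence any solution of \eqref{3d-system} produces, through the $\phi_i$, a solution of \eqref{KMH}, and the ansatz family is invariant. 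The condition $\frac1N\sum_i\theta_i=q$ is preserved because the mean phase \eqref{mean_phase} is a first integral. So the set in the statement is invariant.

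\textbf{Step 3: identification with the unstable manifold.} At $\beta=0$ we get $\phi_i=2\pi\xi_i+2A-\alpha+2\pi k$. Since $2\pi\xi_i=\frac{\pi(2i-1)}{N}$, the mean constraint forces this configuration to be exactly $\theta^N_q$, up to the rotation fixed by $q$. For fixed $q$, the mean constraint removes one of the three parameters, and at $\beta=0$ the parameters $\alpha$ and $A$ enter only through $2A-\alpha$. So the set is a two-dimensional surface through $\theta^N_q$.

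Near $\beta=0$ we have $B_N=1+O(\beta^{N-2})$ and $D_N=O(\beta^{N-2})$. Consequently $\dot\beta=\frac K2\beta(1+o(1))$, so $\beta(t)\to0$ like $e^{Kt/2}$ as $t\to-\infty$. Moreover $\dot\alpha$ and $\dot A$ are $O(e^{(N-2)Kt/2})$, which is integrable on $(-\infty,0]$ because $N\ge3$. Hence $\alpha$ and $A$ have limits as $t\to-\infty$, and every point of the set converges backward, exponentially, to $\theta^N_q$ itself. Differentiating in $(\alpha,\beta)$ at $\beta=0$ gives the tangent vectors $\partial_\beta\phi_i\propto\sin(2\pi\xi_i+2A)$ and $\partial_A\phi_i\propto1$. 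Modulo the direction $\underline1$, which is removed by the mean constraint, these span $E^\u(\theta^N_q)$ from Remark \ref{rk_eigenspace}: the $\beta$-direction is $\sin(2\pi\xi_i+2A)$, and varying $A$ rotates its phase.

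\textbf{Main obstacle.} The equilibrium is not hyperbolic: by Proposition \ref{prop_spectral} it has an $(N-2)$-dimensional kernel. Uniqueness of $W^\u$ must therefore come from the gap between the eigenvalue $K/2$ and the eigenvalue $0$. I would invoke the strong-unstable (pseudo-unstable) manifold theorem for the spectral splitting $\{K/2\}\cup\{0\}$. It gives a unique locally invariant $C^1$ manifold, tangent to $E^\u$, consisting of the points whose backward orbit converges to $\theta^N_q$ at rate $e^{Kt/2}$; all other nearby backward orbits stay at sub-exponential distance along the center directions. Our surface is invariant, is tangent to $E^\u$, and has this backward convergence property, so locally it coincides with $W^\u_{\mathrm{loc}}$. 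The global statement then follows by flowing forward, since the surface is invariant and the flow of \eqref{3d-system} is globally defined. Finally, the parameter range $A\in[-\frac\pi2,\frac\pi2]$ suffices because $\phi$ is $\pi$-periodic in $A$ up to an index shift, which can be absorbed into the $2\pi k$ term and a relabelling.
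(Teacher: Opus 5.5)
\textbf{Gap: Step 3 does not cover points of the set with $\beta$ away from $0$.} Your Steps 1 and 2 coincide with the paper's derivation (Lemma \ref{chain_rule} plus coefficient matching against Lemma \ref{discrete_lemma}). They correctly give invariance and \eqref{3d-system}. The problem is the inclusion of the \emph{whole} parametrized set into $W^\u(\theta^N_q)$.

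The expansion $B_N=1+O(\beta^{N-2})$ only says that once $\beta$ is small, it keeps decaying backward like $e^{Kt/2}$. It says nothing about a point with, say, $\beta=0.99$. For such a point you must know that $\beta$ decreases backward along the entire orbit, i.e.\ that $B_N(A,\beta)>0$ on all of $[-\frac{\pi}{2},\frac{\pi}{2}]\times(0,1)$. If $B_N$ could vanish or change sign, $\beta$ could stall or grow backward, and the orbit need never come near $\theta^N_q$.

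This global estimate is the core of the paper's Step 2. Put $r=(-1)^N\cos(2NA)\in[-1,1]$. Then $B_N$ is increasing in $r$, since the numerator of $\partial_rB_N$ equals $\beta^{N-2}(1-\beta^2)(1-\beta^{2N})>0$. At $r=-1$ both numerator and denominator factor, giving $B_N\geq\frac{1-\beta^{N-2}}{1-\beta^N}\geq\frac{N-2}{N}$. Hence $\dot\beta\geq\frac{K}{2}\frac{N-2}{N}\beta(1-\beta^2)$, and a comparison argument gives $\beta(t)\to0$ as $t\to-\infty$ from every $\beta(0)\in(0,1)$. This is where $N\geq3$ is genuinely needed: for $N=2$ the lower bound is $0$.

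\textbf{The local identification also needs two repairs.}

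First, $(\beta,A)$ are polar-type coordinates and are singular at $\beta=0$. After imposing the mean constraint, the $A$-derivative of the parametrization vanishes at $\beta=0$. Your vector $\partial_A\phi_i\propto1$ is exactly the direction $\underline{1}$ that the constraint removes. So to justify ``$C^1$ surface tangent to $E^\u$'' you need to pass, as the paper does, to $x=\beta\cos(2A)$, $y=\beta\sin(2A)$. In these coordinates the parametrization is analytic, has rank two at the origin, and its differential has image $E^\u(\theta^N_q)$.

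Second, the strong-unstable manifold theorem characterizes points whose backward orbits converge \emph{exponentially}. By contrast, $W^\u(\theta^N_q)$ is defined by mere convergence. Your remark that other orbits ``stay at sub-exponential distance along the center directions'' does not exclude orbits creeping back to $\theta^N_q$ through the $(N-2)$-dimensional kernel.

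What excludes them is the structure of the equilibrium set near $\theta^N_q$:
\begin{itemize}
\item The set $\{\sum_j e^{\i\theta_j}=0\}$ of general incoherent equilibria is there a smooth $(N-2)$-dimensional manifold, because the gradients of $\sum_j\cos\theta_j$ and $\sum_j\sin\theta_j$ are independent for $N\geq3$.
\item Its tangent space at $\theta^N_q$ is exactly $\ker DV(\theta^N_q)$, so the center manifold consists of equilibria.
\item It is normally repelling, so every backward orbit staying near $\theta^N_q$ converges to the base point of its own unstable fiber.
\end{itemize}
Hence $W^\u(\theta^N_q)$ is locally the strong-unstable fiber.

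The paper's Step 3 tacitly assumes that $W^\u_{loc}(\theta^N_q)$ is a two-dimensional manifold. Your explicit treatment of the non-hyperbolicity is therefore the better framing, but it needs this fact to be complete.
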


\begin{proof}
For clarity, throughout this proof we denote the set appearing on the right-hand side of the statement by $\mathcal{M}$. The proof proceeds in four steps. First, we show that, as a set of $\R^N$, $\mathcal{M}$ is contained is some hypercube, namely
$$\mathcal{M}\subseteq\theta^N_q+(-\pi,\pi)^N.$$
Therefore this will allow us to well define the mean phase on $\R^N$ and, in a second step, to project $\mathcal{M}$ on the $N$-dimensional torus $\T^n$ without creating any topological pathology. Then, we prove that $\mathcal{M}\subseteq W^\u(\theta^N_q)$. We continue by proving that, upon restricting $\mathcal{M}$ to a sufficiently small neighbourhood $\mathcal{U}$, one has
$$W^\u_{loc}(\theta^N_q)\cap\mathcal{U}=\mathcal{M}\cap\mathcal{U}.$$ Finally, using the invariance of $\mathcal{M}$ under the dynamics, we obtain the reverse inclusion $W^\u(\theta^N_q)\subseteq\mathcal{M}$, thereby completing the proof.
\jump\jump\noindent
\fbox{\textbf{Step 1:} $\mathcal{M}\subseteq\theta^N_q+(-\pi,\pi)^N$} We start by noticing that the quantity
$$\frac{1}{N}\sum_{i=1}^N\phi_i(\alpha,\beta,A)$$
is well defined on $\R^N$ and therefore we can use it in order to reduce the number of variables from three to two. Indeed, if we define $\varphi_i(\beta,A)\coloneqq\phi_i(\alpha,\beta,A)+\alpha$ and fix $q\in\R$, one has
\begin{equation}\label{3to2}
    \alpha=-q+\frac{1}{N}\sum_{i=1}^N\varphi_i(\beta,A).
\end{equation}
This relation uniquely determines $\alpha$ as a function of $\beta$ and $A$, which in the following we will denote by $\alpha_N(\beta,A)$ or simply by $\alpha(\beta,A)$ or $\alpha$ when the value of $N$ is clear from the context, depending on what we want to emphasize. In particular, we see that $i$-th component of $\theta\in\mathcal{M}$ is given by
$$\theta_i=q+\varphi_i(\beta,A)-\frac{1}{N}\sum_{j=1}^N\varphi_j(\beta,A)=q+\frac{1}{N}\sum_{j=1}^N\big(\varphi_i(\beta,A)-\varphi_j(\beta,A)\big).$$
It is clear from the definition of $\varphi_i$ that the properties \eqref{phi_prop_1} and \eqref{phi_prop_2} hold true for them as well. Therefore, by using these properties we obtain that
$$\theta_i\leq q+\frac{1}{N}\sum_{j=1}^{i-1}\big(\varphi_i(\beta,A)-\varphi_j(\beta,A)\big)\leq q+\frac{2\pi}{N}(i-1)$$
and, analogously, that
$$\theta_i\geq q+\frac{1}{N}\sum_{j=i+1}^{N}\big(\varphi_i(\beta,A)-\varphi_j(\beta,A)\big)\geq q-\frac{2\pi}{N}(N-i).$$
From this, and the definition of $\theta^N_q$, we obtain that
$$\theta_i-\left(\theta^N_q\right)_i\leq\frac{2\pi}{N}(i-1)+q-\frac{\pi}{N}(2i-1-N)-q=\pi\left(1-\frac{1}{N}\right)<\pi$$
and, analogously, that
$$\theta_i-\left(\theta^N_q\right)_i\geq\frac{2\pi}{N}(N-i)+q-\frac{\pi}{N}(2i-1-N)-q=-\pi\left(1-\frac{1}{N}\right)>-\pi,$$
which concludes the first step of the proof. From now on we continue our analysis on $\T^N$ since we are working on a proper subset of it and therefore it is the same as working on $\R^N$.\jump\jump\noindent
 \fbox{\textbf{Step 2:} $\mathcal{M}\subseteq W^\u(\theta^N_q)$} We now prove that the whole set $\mathcal{M}$ is contained in the global unstable manifold $W^u(\theta^N_q)$. To this end, we begin by deriving the ODE system \eqref{3d-system}. Firstly, we apply the classical chain rule to the functions $\theta_i$, obtaining
$$\dot{\theta}_i=-\dot{\alpha}-2\frac{\sin(\theta_i+\alpha)}{1-\beta^2}\dot{\beta}+2\frac{1-2\beta\cos(\theta_i+\alpha)+\beta^2}{1-\beta^2}\dot{A}.$$
Detailed computations leading to this identity can be found in Lemma \ref{chain_rule}, in Appendix \ref{App_A}. Comparing this equation with \eqref{discrete_formula}, we see that the triplet $(\beta,\alpha,A)$ must satisfy
    \begin{equation}
    \begin{cases}
    \displaystyle
        -\frac{2}{1-\beta^2}\dot{\beta}=-K\beta B_{N}(A,\beta),\\
        \displaystyle
        -\frac{4\beta}{1-\beta^2}\dot{A}=-K\beta D_{N}(A,\beta),\\
        \displaystyle
        -\dot{\alpha}+2\frac{1+\beta^2}{1-\beta^2}\dot{A}=0,
    \end{cases}
    \end{equation}
which is equivalent to \eqref{3d-system}. In particular, this shows that the set $\mathcal{M}$ is invariant under the dynamics \eqref{KMH}. As already mentioned during Step 1, by using the first integral \eqref{mean_phase}, we can uniquely express $\alpha$ as a function of $\beta$ and $A$, denoted by $\alpha(\beta,A)$. On the $N$-dimensional torus $\T^N$, we have that \eqref{3to2} reads as
\begin{equation}\label{mean_phase_new}
    -\alpha+\frac{2}{N}\sum_{i=1}^N\arctan\left(\frac{1-\beta}{1+\beta}\tan\left(\pi\xi_i+A\right)\right)=q.
\end{equation}
We now observe that $\theta^N_q\in\mathcal{M}$. Indeed, for every value of $A$, one has
\begin{align*}
        \phi_i(\alpha(0,A),0,A)
        &=2\pi\xi_i+2A-\alpha(0,A)\\
        &=2\pi\xi_i+2A-\frac{2}{N}\sum_{j=1}^N(\pi\xi_j+A)+q\\
        &=2\pi\xi_i-\pi+q=(\theta^N_q)_i.
\end{align*}
 We can now analyze the equation for $\dot{\beta}$ and deduce that $\mathcal{M}\subseteq W^\u(\theta^N_q)$. Our goal is to prove that
    $$\underset{t\rightarrow-\infty}{\lim}\beta(t)=0.$$
so that consequently, we obtain
    $$\underset{t\rightarrow-\infty}{\lim}\phi(\alpha,\beta,A)(t)=\theta^N_q,$$
and hence, by definition, $\phi(\alpha,\beta,A)\in W^u(\theta^N_q)$. We first prove that
    $$B_N(A,\beta)\geq\frac{N-2}{N}\qquad \forall A\in\left[-\frac{\pi}{2},\frac{\pi}{2}\right],\forall\beta\in[0,1).$$
Indeed, recalling that
    $$B_N(A,\beta)=\frac{1+\beta^{2N-2}+\beta^{N-2}(1+\beta^2) r}{1+\beta^{2N}+2\beta^N r},$$
where $r=(-1)^N\cos(2NA)$, we see that, for every fixed $\beta\in[0,1)$, differentiating with respect to $r$ shows that this function is strictly increasing in $r$. Hence, its minimum is attained at $r=-1$. Therefore,
    $$B_N(A,\beta)\geq\frac{1+\beta^{2N-2}-\beta^{N-2}(1+\beta^2)}{1+\beta^{2N}-2\beta^N}=\frac{1-\beta^{N-2}}{1-\beta^N}\geq\frac{N-2}{N}$$
for every $A$ and $\beta$. In particular, for every $N\geq3$ and every initial datum $(\beta,\alpha,A)\in(0,1)\times[-\pi,\pi]\times\left[-\frac{\pi}{2},\frac{\pi}{2}\right]$, one has $\dot{\beta}>0$. Hence, there exists $\ell\in[0,1)$ such that
    $$\underset{t\rightarrow-\infty}{\lim}\beta(t)=\ell.$$
By the invariance of $\alpha$-limit sets, one must have $\ell=0$. This proves that $\mathcal{M}\subset W^u(\theta^N_q)$.\jump\jump\noindent
\fbox{\textbf{Step 3:} $W^\u_{loc}(\theta^N_q)\cap\mathcal{U}=\mathcal{M}\cap\mathcal{U}$}
We next show that, locally, $\mathcal{M}$ is a two-dimensional analytic manifold with tangent space at $\theta^N_q$ given by $E^\u\left(\theta^N_q\right)$, as in Remark \ref{rk_eigenspace}. To this end, it is convenient to rewrite the elements of $\mathcal{M}$ in Cartesian coordinates. Using \eqref{mean_phase_new} and the trigonometric identity \eqref{trig_2}, we obtain
\begin{align*}
        \phi_i(\alpha,\beta,A)
        &=2\pi\xi_i+2A-2\arctan\left(\frac{\beta\sin(2\pi\xi_i+2A)}{1+\beta\cos(2\pi\xi_i+2A)}\right)-\pi-2A\\
        &\hspace{5cm}
        +\frac{2}{N}\sum_{j=1}^N\arctan\left(\frac{\beta\sin(2\pi\xi_j+2A)}{1+\beta\cos(2\pi\xi_j+2A)}\right)+q\\
        &=\left(\theta^N_q\right)_i
        -2\arctan\left(\frac{x s_i+yc_i}{1+xc_i-ys_i}\right)
        +\frac{2}{N}\sum_{j=1}^N\arctan\left(\frac{x s_j+yc_j}{1+xc_j-ys_j}\right),
    \end{align*}
where we have set $c_i=\cos(2\pi\xi_i)$, $s_i=\sin(2\pi\xi_i)$, and
    $$\begin{cases}
        x=\beta\cos(2A),\\
        y=\beta\sin(2A).
    \end{cases}$$
Thus, we obtain a new parametrization of our set in terms of $(x,y)\in\R^2$ with $x^2+y^2<1$. Denoting by $\hat{\phi}_i(x,y)$ the function $\phi_i(\alpha,\beta,A)$ written in these Cartesian coordinates, a direct computation gives
    \begin{equation}\label{tangency_comp}
        \partial_x\hat{\phi}_i(0,0)=-2s_i,\qquad\partial_y\hat{\phi}_i(0,0)=-2c_i.
    \end{equation}
    Since the vectors $\underline{c}$ and $\underline{s}$ are linearly independent, as already shown in the proof of Proposition \ref{prop_spectral}, the Jacobian matrix of $\hat{\phi}\coloneqq(\hat{\phi}_1,\ldots,\hat{\phi}_N)$ at the origin has rank two, namely
    $$\operatorname{rk}\left(D\hat{\phi}(0,0)\right)=2.$$
    Hence, there exists a $2\times2$ minor with non-zero determinant. By continuity, there exists an open subset $\Tilde{\mathcal{U}}\subseteq B_1((0,0))\subset\R^2$ containing the origin such that, for every $(x,y)\in\Tilde{\mathcal{U}}$,
    $$\operatorname{rk}\left(D\hat{\phi}(x,y)\right)=2.$$
    Since $\hat{\phi}$ is analytic, the Rank Theorem \cite{narasimhan1965lectures} implies, after possibly restricting $\Tilde{\mathcal{U}}$, that
    $$\hat{\phi}(\Tilde{\mathcal{U}})\subset\T^N$$
    is a two-dimensional analytic submanifold. Moreover, since $(0,0)\in\Tilde{\mathcal{U}}$, then we have that $\theta^N_q\in\hat\phi(\Tilde{\mathcal{U}})$. From Step 2 we know that $\hat\phi(\Tilde{\mathcal{U}})\subseteq\mathcal{M}\subseteq W^\u(\theta^N_q)$ so that, by the Rank Theorem, $\hat\phi(\Tilde{\mathcal{U}})$ is an open set with respect to the subset topology induced by $W^\u(\theta^N_q)$. Therefore, there must exist an open subset $\mathcal{U}$ such that
    $$\mathcal{M}\cap\mathcal{U}=W^\u_{loc}(\theta^N_q)\cap\mathcal{U}$$
    which concludes this step.\jump\jump\noindent
\fbox{\textbf{Step 4:} $W^\u(\theta^N_q)\subseteq\mathcal{M}$} It remains to prove the reverse inclusion. As anticipated, this follows from the local identification of $\mathcal{M}$ with $W^u_{loc}(\theta^N_q)$ and from the invariance of $\mathcal{M}$. Indeed, let $\theta\in W^u(\theta^N_q)$. Then
    $$\underset{t\rightarrow-\infty}{\lim}\theta(t)=\theta^N_q,$$
so there exists $t_0<0$ such that $\theta(t_0)\in\mathcal{U}$. Since $W^u(\theta^N_q)$ is invariant, we have
    $$\theta(t_0)\in\mathcal{U}\cap W^u(\theta^N_q)=\mathcal{U}\cap W^u_{loc}(\theta^N_q)=\mathcal{M}\cap\mathcal{U}.$$
Thus, $\theta(t_0)\in\mathcal{M}$, and evolving this point forward in time yields $\theta\in\mathcal{M}$. This completes the proof.
\end{proof}

It is also worth noting that the manifolds $W^\u(\theta^N_q)$ are closely related to the Watanabe-Strogatz ansatz \cite{watanabe1993integrability,watanabe1994constants}. In those works, the authors proved that a system of $N$ phase oscillators with global cosine coupling is completely integrable and that the $N$-dimensional phase space is foliated by two-dimensional manifolds. The transformation relating their original variables to those of the reduced system has the same form as the functions $\phi_i$ in \eqref{fin_N_ansatz}. In their setting, the number of variables is reduced from three to two because two of their variables form a Hamiltonian system with one degree of freedom. In our case, this reduction is instead made possible by the existence of another first integral, namely the mean phase $\Theta$ in \eqref{mean_phase}.
\end{section}

%% file: MFL_CL_connection.tex
\begin{section}{Connection with CL and MFL}\label{conn_sec}
We are now interested in showing how this family of manifolds is related to the Ott--Antonsen manifold and to its counterpart discovered in \cite{kuehn2025mean}. We begin with a lemma which provides the infinite-dimensional analogue of the constraint imposed by the first integral of the system.

\begin{lemma}\label{inf_constrain}
    For every $\alpha\in[-\pi,\pi]$, $\beta\in[0,1)$, and $A\in[-\pi/2,\pi/2]$, one has
    $$\lim_{N\rightarrow+\infty}\frac{1}{N}\sum_{i=1}^N\phi_i(\alpha,\beta,A)=-\alpha+2A+\pi.$$
\end{lemma}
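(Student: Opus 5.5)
The plan is to read the average $\frac{1}{N}\sum_{i=1}^N\phi_i(\alpha,\beta,A)$ as a midpoint Riemann sum. Since $\xi_i=\frac{2i-1}{2N}$ is the midpoint of $\left[\frac{i-1}{N},\frac{i}{N}\right]$, we have
$$\frac{1}{N}\sum_{i=1}^N\phi_i(\alpha,\beta,A)=\frac{1}{N}\sum_{i=1}^N\phi(\alpha,\beta,A,\xi_i).$$
So it suffices to show two things: that the map $\xi\mapsto\phi(\alpha,\beta,A,\xi)$ is Riemann integrable on $[0,1]$, and that
$$\int_0^1\phi(\alpha,\beta,A,\xi)\,\d\xi=-\alpha+2A+\pi.$$

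\textbf{Integrability.} The $2\pi k$ correction in \eqref{continuous_ansatz} makes $\xi\mapsto\phi(\alpha,\beta,A,\xi)$ continuous. It is also monotone increasing, as already used in the proof of \eqref{phi_prop_1}, and bounded on $[0,1]$. Continuity on the compact interval $[0,1]$ already gives Riemann integrability, so the midpoint sums converge to the integral as $N\to\infty$. No rate is needed here.

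\textbf{Rewriting $\phi$.} To evaluate the integral, I would use the trigonometric identity \eqref{trig_2} exactly as in Step 3 of the proof of Theorem \ref{main_theorem}, with $\vartheta\coloneqq2\pi\xi+2A$:
$$\phi(\alpha,\beta,A,\xi)+\alpha=\vartheta-2\arctan\left(\frac{\beta\sin\vartheta}{1+\beta\cos\vartheta}\right).$$
The one point needing care is the branch of $\arctan$ and $\tan$. Both sides are continuous in $\xi$; the right-hand side is continuous because $1+\beta\cos\vartheta>0$ for $\beta<1$. They differ by an integer multiple of $2\pi$ at each point, and they agree at a single point, for instance where $\pi\xi+A=0$ (or modulo the relevant $k$). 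Hence they agree everywhere. This branch-consistency check is the only delicate step; the rest is computation.

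\textbf{Evaluating the integral.} Integrating the first term over $\xi\in[0,1]$ gives
$$\int_0^1(2\pi\xi+2A)\,\d\xi=\pi+2A.$$
For the second term, observe that
$$\arctan\left(\frac{\beta\sin\vartheta}{1+\beta\cos\vartheta}\right)=\operatorname{Im}\log\left(1+\beta e^{\i\vartheta}\right),$$
using the principal branch, which is well defined because $\operatorname{Re}(1+\beta e^{\i\vartheta})>0$. As $\xi$ runs over $[0,1]$, $\vartheta$ runs over one full period. The function $z\mapsto\log(1+\beta z)$ is holomorphic on a neighbourhood of the closed unit disk and vanishes at $z=0$, so by the mean value property its average over the unit circle is $0$. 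Alternatively, the integrand is odd in $\vartheta$ and $2\pi$-periodic, which gives the same conclusion. Therefore the arctan term integrates to zero.

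Combining the three steps yields $\int_0^1\phi\,\d\xi=-\alpha+2A+\pi$, which is the claimed limit.
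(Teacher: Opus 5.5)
Your argument is correct and follows essentially the paper's route: the midpoint sums converge to $\int_0^1\phi\,\d\xi$, and $\phi+\alpha$ splits into the linear part $2\pi\xi+2A$ (contributing $\pi+2A$) plus an odd periodic remainder with zero mean. The only difference is that you write the remainder explicitly as $-2\arctan\bigl(\beta\sin\vartheta/(1+\beta\cos\vartheta)\bigr)$ via \eqref{trig_2}, which is what forces your branch check, whereas the paper just observes that $\tilde{\phi}(z)-2z$ is $\pi$-periodic and odd and never needs that formula.
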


\begin{proof}
    Fix $\alpha$, $\beta$, and $A$, and denote the function $\phi(\alpha,\beta,A,\xi)$, defined in \eqref{continuous_ansatz}, simply by $\phi(\xi)$.
     From the continuity of $\phi(\xi)$ and from the definition of the Riemann integral, it follows that
    $$\lim_{N\rightarrow+\infty}\frac{1}{N}\sum_{i=1}^N\phi_i(\alpha,\beta,A)=\int_0^1\phi(z)\d z=-\alpha+\frac{1}{\pi}\int_A^{A+\pi}\Tilde{\phi}(z)\d z,$$
    where
    $$\Tilde{\phi}(z)\coloneqq\phi\left(\frac{z-A}{\pi}\right)+\alpha.$$
    We readily observe that
    $$\Tilde{\phi}(z+\pi)=\Tilde{\phi}(z)+2\pi,\qquad\Tilde{\phi}(-z)=-\Tilde{\phi}(z)$$
    for all $z\in\R$. Therefore,
    $$\hat{\phi}(z)\coloneqq\Tilde{\phi}(z)-2z$$
    is $\pi$-periodic and odd, hence
    $$\int_{A}^{A+\pi}\hat{\phi}(z)=0,$$
    which implies
    $$\lim_{N\rightarrow+\infty}\frac{1}{N}\sum_{i=1}^N\phi_i(\alpha,\beta,A)=-\alpha+\frac{1}{\pi}\int_A^{A+\pi}2z\d z=-\alpha+2A+\pi.\qedhere$$
\end{proof}
We also observe that the dynamics on $W^u(\theta^N_q)$ simplifies in the limit $N\rightarrow+\infty$. Indeed, since $\beta\in[0,1)$, the system \eqref{3d-system} converges to
\begin{equation}\label{limit_dyn}
    \begin{cases}
    \dot{\beta}=\frac{K}{2}\beta(1-\beta^2),\\
    \dot{\alpha}=0,\\
    \dot{A}=0,
\end{cases}
\end{equation}
which is precisely the reduced dynamics on $\mathcal{M}_{OA}$. From now on we are going to denote the solutions of \eqref{3d-system} with $(\alpha_N(t),\beta_N(t),A_N(t)$, in order to differentiate them from the one of the latter 3-dimensional system.\\

It remains to show how the geometric structure of $W^u(\theta^N_q)$ is approximated by $\mathcal{M}_{OA}$ and by its counterpart for the continuum limit as $N\rightarrow+\infty$. Therefore we now introduce two operators that lift finite-dimensional vectors into infinite-dimensional spaces, such as spaces of functions or measures. The first one is the \textbf{\textit{empirical-lift}} operator
\begin{align*}
    \mu^N:&\;\R^N\longrightarrow\mathcal{P}_1\\
             &\;x\longrightarrow\mu^N(x)\coloneqq\frac{1}{N}\sum_{i=1}^N\delta_{x_i}.
\end{align*}
while the second operator is the \textbf{\textit{step-function lift}} operator
\begin{align*}
    \L_N:&\;\R^N\longrightarrow\mathcal L^\infty([0,1])\\
             &\;x\longrightarrow\L_N(x)(\cdot),
\end{align*}
where
$$\L_N(x)(y)=x_i\quad\text{if }y\in\mathcal{I}_i$$
and
$$\mathcal{I}_i\coloneqq\left[\frac{i-1}{N},\frac{i}{N}\right)\text{ for }i=1,\ldots,N-1\qquad\text{ and }\qquad \mathcal{I}_N=\left[\frac{N-1}{N},1\right].$$
Of course, both operators can be defined analogously on the torus $\T^N$. In what follows, we denote by $W_p(\cdot,\cdot)$ the $p$-Wasserstein distance for $p>0$; see \cite{villani2008optimal} for further details. Moreover, we denote by $d_{\H}^{p}(\cdot,\cdot)$ the Hausdorff distance in the metric space $(\mathcal{P}_1,W_p)$, and by $d_{\H}^{\infty}(\cdot,\cdot)$ the Hausdorff distance in the metric space $(\mathcal L^\infty,\|\cdot\|_\infty)$. In the following theorem, we denote the restrictions of the previous manifolds to the range $\beta\in[0,\Tilde{\beta}]$ by
$$W^\u(\theta^N_q)_{\Tilde{\beta}},\qquad \mathcal{M}_{\mathrm{OA},{\Tilde{\beta}}},\qquad W^\u(y_q)_{\Tilde{\beta}}.$$

\begin{theorem}\label{conv_manifolds}
For every $\Tilde{\beta}\in[0,1)$, $N\geq3$, $q\in\T$, and every $p\geq1$, there exist positive constants $C_p=C_p(\tilde{\beta})>0$, $C=C(\tilde{\beta})>0$ such that
\begin{equation}\label{est_MFL}
d_{\H}^p\left(\mu^N\left(W^\u(\theta^N_q)_{\Tilde{\beta}}\right),\mathcal{M}_{\mathrm{OA},{\Tilde{\beta}}}\right)\leq\frac{C_p}{N},
\end{equation}
and
\begin{equation}\label{est_CL}
    d_{\H}^{\infty}\left(\L_N\left(W^u(\theta^N_q)_{\Tilde{\beta}}\right),W^\u(y_{q-\pi})_{\Tilde{\beta}}\right)\leq\frac{C}{N}.
\end{equation}
\end{theorem}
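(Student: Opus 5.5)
The plan is to match points of the two manifolds parameter by parameter, bounding the distance between matched points uniformly on $\beta\le\tilde\beta$. Since the Hausdorff distance is controlled by $\sup_{x\in X}d(x,Y)$ and $\sup_{y\in Y}d(y,X)$, it suffices to give, for each parameter triple, a point of the other manifold at distance $O(1/N)$. Both inequalities follow from one key estimate.

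Take a point $\theta=(\phi_i(\alpha_N,\beta,A))_i$ of $W^\u(\theta^N_q)_{\tilde\beta}$, where $\alpha_N=\alpha_N(\beta,A)$ is fixed by \eqref{3to2}. I would compare it with the continuous profile $\xi\mapsto\phi(\alpha_\infty,\beta,A,\xi)$, where $\alpha_\infty$ is defined by the limiting constraint of Lemma \ref{inf_constrain}, namely $-\alpha_\infty+2A+\pi=q$.

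\textbf{Step (a).} First show $|\alpha_N-\alpha_\infty|\le C(\tilde\beta)/N$. The quantity $\frac1N\sum_i\varphi_i(\beta,A)$ is a midpoint Riemann sum of $\int_0^1\varphi(\xi)\d\xi$. Write $\varphi(\xi)=2\pi\xi+2A+h(\xi)$, where $h$ is the periodic part $-2\arctan\bigl(\beta\sin(2\pi\xi+2A)/(1+\beta\cos(2\pi\xi+2A))\bigr)$ from Step 3 of Theorem \ref{main_theorem}. The linear part is integrated exactly by the midpoint rule. For the periodic part, $h$ is analytic with derivatives bounded uniformly in $\beta\le\tilde\beta$, so the error is at most $C/N^2$. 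In fact, expanding $h$ in its Fourier series $\sum_k \beta^k\dots$, the midpoint rule kills every mode $k$ not divisible by $N$, so the error is $O(\tilde\beta^N)$. Either way the bound is well within $C/N$.

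\textbf{Step (b).} Use the Cartesian representation from Step 3 to bound $|\partial_\xi\phi|\le 2\pi\frac{1+\tilde\beta}{1-\tilde\beta}$ and $|\partial_\alpha\phi|=1$ uniformly. Then the step function $\L_N(\theta)$ satisfies $\|\L_N(\theta)-\phi(\alpha_\infty,\beta,A,\cdot)\|_\infty\le |\alpha_N-\alpha_\infty|+\sup|\partial_\xi\phi|/(2N)\le C/N$. Here $\phi(\alpha_\infty,\beta,A,\cdot)$ must be identified with an element $g_{\alpha',\beta}$ of \eqref{CL_OA_Man} with $q$ replaced by $q-\pi$. Comparing \eqref{continuous_ansatz} with \eqref{inv_CDF_1}–\eqref{inv_CDF_2}, one sets $A=\arctan\bigl(\frac{1+\beta}{1-\beta}\tan(\alpha'/2)\bigr)+\pi(C(\alpha',\beta)+\frac{q-\pi}{2\pi})$ and matches the $2\pi k$ branches. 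This correspondence is a bijection between $(\alpha',\beta)$ and pairs $(\beta,A)$ once the mean constraint is imposed. Verifying this identification (including the shift $q\to q-\pi$ coming from the $+\pi$ in Lemma \ref{inf_constrain}) is the step I expect to be the main technical obstacle, essentially bookkeeping with $\arctan$ branches. Since the correspondence is onto in both directions, the estimate gives both one-sided distances, hence \eqref{est_CL}.

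\textbf{Step (c).} For \eqref{est_MFL}, use that $g$ is the pseudo-inverse CDF of $f_{\alpha',\beta}$ (up to translation by $q-\pi$ and rotation, which preserve the correspondence with $\OA$). On the circle, or already on $\R$, the coupling $\xi\mapsto(\L_N(\theta)(\xi),g(\xi))$ with $\xi$ uniform on $[0,1]$ is admissible. Therefore
$$W_p\bigl(\mu^N(\theta),f_{\alpha',\beta}\bigr)\le\Bigl(\int_0^1|\L_N(\theta)(\xi)-g(\xi)|^p\d\xi\Bigr)^{1/p}\le\|\L_N(\theta)-g\|_\infty\le C/N.$$
The pushforward of the uniform measure under $g$ equals $f_{\alpha',\beta}$ by construction in \cite{kuehn2025mean}, and the pushforward under $\L_N(\theta)$ is $\mu^N(\theta)$. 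Thus $C_p$ can even be taken equal to $C$. Again, surjectivity of the parameter correspondence onto $\mathcal M_{\mathrm{OA},\tilde\beta}$ gives the reverse one-sided bound, completing the Hausdorff estimate.
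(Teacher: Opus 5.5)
Your argument is correct. For \eqref{est_CL} it is essentially the paper's proof. The paper also pairs the point with parameters $(\beta,A)$ with $g_{2A+\pi-q,\beta}$, using \eqref{trig_2} to rewrite $g_{\alpha,\beta}(\xi)=2\arctan\bigl(\frac{1-\beta}{1+\beta}\tan(\pi\xi+\frac{q+\alpha-\pi}{2})\bigr)-\alpha$. So the identification you flag as the main obstacle is a one-line consequence of \eqref{trig_2}, and it gives exactly $A=(\alpha'+q-\pi)/2$. The paper then bounds $|\alpha_N-\alpha_\infty|$ by a midpoint-rule error and the step-function error by the mean value theorem. Your observation that the midpoint rule is exact on $2\pi\xi+2A$ and exponentially small in $N$ on the periodic part is sharper than the paper's generic bound $\operatorname{Lip}(h)/(4N)$, though it is not needed.

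The genuine difference is \eqref{est_MFL}. The paper pairs $\theta$ with $f_{\alpha_N,\beta}$ (same $\alpha$) and invokes the quantization estimate of \cite[Theorem 5.15]{xu2019best} for the empirical measure at midpoint quantiles. It then adds a separate argument that shifting the quantile grid costs $O(1/N)$. You instead deduce \eqref{est_MFL} directly from the $L^\infty$ bound through the quantile coupling $\xi\mapsto(\L_N(\theta)(\xi),g(\xi))$. Your route has several advantages:
\begin{itemize}
\item it is self-contained;
\item it absorbs the grid offset automatically, since the continuous profile is itself a shifted quantile function;
\item it yields a $p$-independent constant that is in fact no worse than the paper's $L+M_p$;
\item it makes the reverse Hausdorff inclusion explicit via surjectivity of $A\mapsto 2A+\pi-q$ modulo $2\pi$, which the paper leaves implicit.
\end{itemize}
Keeping the $L^p$ average instead of the sup even reproduces the factor $(p+1)^{-1/p}$ of the quantization estimate, without the paper's extra shift term.

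Two small remarks. First, for \eqref{est_MFL} alone your Step (a) is dispensable, since $\phi(\alpha,\beta,A,\cdot)$ pushes Lebesgue measure on $[0,1]$ forward to $f_{\alpha,\beta}$ for every $A$. Second, the coupling must be run with the distance on $\mathbb{S}^1$ (values mod $2\pi$), not on $\R$. On $\R$ the real-valued profile pushes forward to the periodic density restricted to a window of length $2\pi$, and that window need not coincide with the support of $f_{\alpha',\beta}\,\d\theta$.
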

\begin{proof}
Since we have explicitly identified the three manifolds that we aim to compare, the proof reduces to obtaining explicit uniform estimates for their elements. We begin by fixing $\Tilde{\beta}\in[0,1)$ and estimating the Hausdorff distance between $W^\u(\theta^N_q)_{\Tilde{\beta}}$ and $\mathcal{M}_{\mathrm{OA},{\Tilde{\beta}}}$. Let $f_{\alpha,\beta}\in\OA$, it follows from \eqref{inv_CDF_1} and \eqref{inv_CDF_2} that the inverse of its cumulative distribution function $F^{-1}_{\alpha,\beta}$ is differentiable. Therefore, by the proof of \cite[Theorem 5.15]{xu2019best}, we have
$$W_p\left(\frac{1}{N}\sum_{i=1}^N\delta_{F^{-1}_{\alpha,\beta}\left(\frac{2i-1}{N}\right)},f_{\alpha,\beta}\d x\right)\leq\frac{1}{2N(p+1)^{1/p}}\max_{\xi\in[0,1]}\partial_\xi F^{-1}_{\alpha,\beta}(\xi),$$
for every $N\in\N$, $\alpha\in[-\pi,\pi]$, and $\beta\in[0,1)$. By definition of $F^{-1}_{\alpha,\beta}$, we know that
$$\partial_\xi F^{-1}_{\alpha,\beta}(\xi)=\frac{1}{f_{\alpha,\beta}(F^{-1}_{\alpha,\beta}(\xi))},$$
which implies that
$$\sup_{(\alpha,\beta)\in[-\pi,\pi]\times[0,\Tilde{\beta}]}W_p\left(\frac{1}{N}\sum_{i=1}^N\delta_{F^{-1}_{\alpha,\beta}\left(\frac{2i-1}{N}\right)},f_{\alpha,\beta}\d x\right)\leq \frac{M_p}{N}$$
for some $M_p>0$. Moreover, from the definition of $W_p$, we see that for every $c\in\R$ and every $N\in\N$,
$$W_p\left(\frac{1}{N}\sum_{i=1}^N\delta_{F^{-1}_{\alpha,\beta}\left(\frac{2i-1}{N}\right)},\frac{1}{N}\sum_{i=1}^N\delta_{F^{-1}_{\alpha,\beta}\left(\frac{2i-1}{N}+c\right)}\right)\leq\frac{L_{\alpha,\beta}}{N}\leq\frac{L}{N},$$
where $L_{\alpha,\beta}$ denotes the Lipschitz constant of $F^{-1}_{\alpha,\beta}$ and
$$L\coloneqq\sup_{\alpha,\beta}L_{\alpha,\beta}.$$
We observe that the previous inequality remains valid if $c$ is replaced by a generic sequence $c_N$. In particular, since the previous estimates are uniform on $[-\pi,\pi]\times[0,\Tilde{\beta}]$, and since a typical element $\theta\in W^\u(\theta^N_q)$ has components $\theta_i=\phi_i(\alpha,\beta,A)$ which, after expressing $A$ as $A_N(\alpha,\beta)$ \eqref{A_express}, are of the form $F^{-1}_{\alpha,\beta}(\frac{2i-1}{N}+c)$ for some $c$, we obtain
$$d_{\H}^p\left(\mu^N\left(W^\u(\theta^N_q)_{\Tilde{\beta}}\right),\mathcal{M}_{\mathrm{OA},{\Tilde{\beta}}}\right)\leq\frac{L+M_p}{N}.$$
This proves the first claim of the theorem, with $C_p\coloneqq L+M_p$.

We now prove the second statement. We again use the differentiability of $F^{-1}_{\alpha,\beta}$ for every $(\alpha,\beta)\in[-\pi,\pi]\times[0,\Tilde{\beta}]$. By the mean value theorem, we obtain
\begin{align*}
\left\|\L_N\left(\underline{F}^{-1}_{\alpha,\beta}\right)-F^{-1}_{\alpha,\beta}\right\|_\infty=\sup_{i=1,\ldots,N}\sup_{\xi\in\mathcal{I}_i}\left|F^{-1}_{\alpha,\beta}(\xi_i)-F^{-1}_{\alpha,\beta}(\xi)\right|\leq\frac{\left\|\partial_\xi F^{-1}_{\alpha,\beta}\right\|_\infty}{N},
\end{align*}
where the $i$-th component of the vector $\underline{F}^{-1}_{\alpha,\beta}$ is given by ${F}^{-1}_{\alpha,\beta}(\xi_i)$. By applying \eqref{trig_2}, we see that a typical element $g_{\alpha,\beta}$ of $W^\u(y_{q-\pi})$, given in \eqref{CL_OA_Man}, can be re-written as
$$g_{\alpha,\beta}(\xi)=2\arctan\left(\frac{1-\beta}{1+\beta}\tan\left(\pi\xi+\frac{q}{2}+\frac{\alpha}{2}-\frac{\pi}{2}\right)\right)-\alpha.$$
Thus, denoting by $\alpha_N(\beta,A)$ the expression for $\alpha$ obtained from \eqref{mean_phase_new}, we have
\begin{align*}
        \bigg\|g_{-q+2A+\pi,\beta}-\L_N\left(\underline{\phi}(\alpha_N(\beta,A),\beta,A)\right)\bigg\|_\infty\leq\big|q-2A+\pi-\alpha_N(\beta,A)\big|+\\
        +\left\|2\arctan\left(\frac{1-\beta}{1+\beta}\tan\left(\pi\xi+A\right)\right)-\L_N\left(\underline{\phi}(\alpha_N(\beta,A),\beta,A)\right)+\alpha_N(\beta,A)\right\|_{\infty}\leq\\
        \leq \frac{\left\|\partial_\xi F^{-1}_{\alpha_{N},\beta}\right\|_\infty}{4N}+\frac{\left\|\partial_\xi F^{-1}_{\alpha_{N},\beta}\right\|_\infty}{N}
    \end{align*}
Here, the first term is bounded by the rate of convergence of the corresponding Riemann sums towards their Riemann integral. Indeed, for every Lipschitz function $h:[0,1]\rightarrow\R$, one has
\begin{align*}
        \left|\frac{1}{N}\sum_{i=1}^N h\left(\xi_i\right)-\int_0^1h(z)\d z\right|=&\left|\sum_{i=1}^N\int_{\frac{i-1}{N}}^{\frac{i}{N}} (h\left(\xi_i\right)-h(z))\d z\right|\leq\operatorname{Lip}(h)\sum_{i=1}^N\int_{\frac{i-1}{N}}^{\frac{i}{N}} |\xi_i-z|\d z=\\
        =&\operatorname{Lip}(h)\sum_{i=1}^N\frac{1}{4N^2}=\frac{\operatorname{Lip}(h)}{4N}.
    \end{align*}
Therefore,
$$\sup_{(A,\beta)\in[-\pi/2,\pi/2]\times[0,\Tilde{\beta}]}\bigg\|g_{-q+2A+\pi,\beta}-\L_N\left(\underline{\phi}(\alpha_N(\beta,A),\beta,A)\right)\bigg\|_\infty\leq \frac{C}{N}.$$
As before, this implies the desired estimate for the Hausdorff distance:
$$d_{\H}^{\infty}\left(\L_N\left(W^u(\theta^N_q)_{\Tilde{\beta}}\right),W^\u(y_{q-\pi})_{\Tilde{\beta}}\right)\leq\frac{C}{N}.$$
\end{proof}
\begin{rk} The fact that \eqref{est_MFL} holds with respect to every Wasserstein distance $W_p$ is not surprising, since on bounded domains all Wasserstein distances are equivalent, that is, $W_p$ and $W_q$ are equivalent for every $p,q\geq 1$. Moreover, by the monotonicity of $L^p$ norms on probability spaces, \eqref{est_CL} implies convergence with respect to the Hausdorff distance induced by any $L^p$ norm, for every $p\geq 1$. Hence, the latter result does not essentially depend on the choice of ambient topology.\\

The step-function lift operator $\operatorname{L}_N$ is introduced to remain consistent with the continuum-limit procedure commonly adopted in the literature; see \cite{medvedev2014nonlinear}. For our purposes, however, a linear interpolation would lead to the same result. Higher-order interpolations could also be employed, owing to the smoothness of $F^{-1}_{\alpha,\beta}(\xi)$.
\end{rk}
The convergence rates established in Theorem \ref{conv_manifolds} suggest that trajectories evolving on the manifolds identified above may exhibit a stronger mean-field approximation than that provided by the classical theory \cite{golse2016dynamics}. Indeed, in the classical setting, the convergence factor $C(t)$ in \eqref{MFL_rate_conv} grows proportionally to $\e^{tL}$, where $L$ denotes the Lipschitz constant of the interaction kernel; for \eqref{KMH}, one has $L=1$. We will show that, when the initial datum lies on the unstable manifolds described above, the corresponding particle trajectories converge to the mean-field dynamics uniformly in time.\\

Our strategy relies on two complementary ingredients. For values of $\beta$ bounded away from $1$, the geometric convergence of the invariant manifolds provides the desired approximation. In contrast, when $\beta$ approaches $1$, we exploit the fact that both the finite-dimensional and infinite-dimensional reduced dynamics converge exponentially towards the synchronized state. Combining these two observations will eventually allow us to establish a uniform-in-time mean-field limit. We therefore begin by deriving quantitative estimates for the convergence towards the synchronized state.
\begin{theorem}\label{conv_rate_synchro}
\begin{enumerate} 
\item[i)] For every $(\alpha(0),\beta(0))\in[-\pi,\pi]\times(0,1)$, the trajectory $f_{\alpha(t),\beta(t)}\in\OA$ converges exponentially towards $\delta_{\alpha(0)}$ with respect to the $p$-Wasserstein distance. More precisely, for every $p\geq1$ there exists a constant $\Tilde{C}_p>0$ such that, for every initial datum $(\alpha(0),\beta(0))\in[-\pi,\pi]\times(0,1)$ and every $t\in\R^+$, the following estimates hold:
\begin{equation} W_p\big(f_{\alpha(t),\beta(t)}\d\theta,\delta_{\alpha(0)}\big)^p \leq \Tilde{C}_p\frac{1-\beta(0)^2}{\beta(0)^2} e^{-Kt}, \qquad \forall p>1; \end{equation} 
\begin{equation} W_1\big(f_{\alpha(t),\beta(t)}\d\theta,\delta_{\alpha(0)}\big) \leq \Tilde{C}_1 \frac{1-\beta(0)^2}{\beta(0)^2} e^{-Kt} \left( 1+\left|\ln\frac{1-\beta(0)^2}{\beta(0)^2}\right|+Kt \right). 
\end{equation} 
\item[ii)] For every $q\in\T^1$, $N\geq3$ and $\theta(0)\in W^\u(\theta^N_q)$, one has \[ \lim_{t\rightarrow+\infty}|\theta(t)-q\underline{1}|=0. \] Moreover, for every $p\geq1$ there exists a constant $\Tilde{D}_p>0$ such that, for every $t\in\R$, the following estimates hold:
\begin{equation} W_p\left(\mu^N(\theta(t)),\delta_{q}\right)^p \leq \Tilde{D}_p \left( \frac{1}{N} + \frac{1-\beta(0)^2}{\beta(0)^2} e^{-\frac{K}{3}t} \right), \qquad \forall p>1; \end{equation}
\begin{equation} W_1\left(\mu^N(\theta(t)),\delta_{q}\right) \leq \Tilde{D}_1 \left( \frac{1}{N} + \frac{1-\beta(0)^2}{\beta(0)^2} e^{-\frac{K}{3}t} \left( 1+\left|\ln \frac{1-\beta(0)^2}{\beta(0)^2}\right| +\frac{K}{3}t \right) \right).
\end{equation}
\end{enumerate}
\end{theorem}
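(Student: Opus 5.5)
The plan is to reduce both parts to the explicit reduced dynamics: \eqref{red_dyn_OA} on $\OA$ and \eqref{3d-system} on $W^\u(\theta^N_q)$. In each case I bound the Wasserstein distance to a Dirac mass in terms of $1-\beta^2$, and then bound $1-\beta^2$ exponentially in time.

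\emph{Part i).} Here $\alpha$ is constant, and $u=\beta^2$ solves the logistic equation $\dot u=Ku(1-u)$. Hence
\[
1-\beta(t)^2=\frac{1-\beta(0)^2}{1-\beta(0)^2+\beta(0)^2e^{Kt}}\leq\frac{1-\beta(0)^2}{\beta(0)^2}e^{-Kt}.
\]
The density $f_{\alpha,\beta}$ is a Poisson kernel concentrated at the point $\theta_\alpha$ where $\cos(\alpha+\theta)=1$; I identify this point with the Dirac location of the statement, up to the sign convention. Since the Dirac coupling is the only one, $W_p(f\,\d\theta,\delta_{\theta_\alpha})^p=\int|\theta-\theta_\alpha|^pf\,\d\theta$, with $|\cdot|$ the circle distance. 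Writing $\varepsilon=1-\beta$ and $x=\theta-\theta_\alpha\in[-\pi,\pi]$, the identity $1-2\beta\cos x+\beta^2=\varepsilon^2+4\beta\sin^2(x/2)$ gives $f\lesssim\varepsilon/(\varepsilon^2+x^2)$ for $\beta\geq 1/2$. Two cases follow.
\begin{itemize}
\item For $p>1$, the bound $|x|^p/(\varepsilon^2+x^2)\leq|x|^{p-2}$ is integrable, so $W_p^p\leq C_p\varepsilon\leq C_p(1-\beta^2)$.
\item For $p=1$ one only gets $\int|x|/(\varepsilon^2+x^2)\lesssim 1+|\ln\varepsilon|$, hence $W_1\leq C(1-\beta^2)\bigl(1+|\ln(1-\beta^2)|\bigr)$.
\end{itemize}
For $\beta<1/2$ all the $W_p$ are bounded by $\pi$ anyway, and the right-hand side is bounded below, so enlarging the constant covers this range. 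Finally, $g(s)=s(1+|\ln s|)$ is increasing on $(0,e^{-2}]$, so one can substitute the bound $s\leq \frac{1-\beta(0)^2}{\beta(0)^2}e^{-Kt}$ into $g$, which produces the factor $1+|\ln\frac{1-\beta(0)^2}{\beta(0)^2}|+Kt$. When the bound exceeds $e^{-2}$, the trivial bound $W_1\leq\pi$ suffices.

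\emph{Part ii).} From Step 2 of Theorem~\ref{main_theorem}, $B_N\geq\frac{N-2}{N}\geq\frac13$ for $N\geq3$. Thus $\dot\beta_N\geq\frac K6\beta_N(1-\beta_N^2)$, and comparison with the logistic equation gives $1-\beta_N(t)^2\leq\frac{1-\beta(0)^2}{\beta(0)^2}e^{-Kt/3}$. This is the source of the rate $K/3$.

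I will \emph{not} use Theorem~\ref{conv_manifolds}, because its constants blow up as $\tilde\beta\to1$. Instead I use directly that $\theta_i=\phi_i$ are the values at $\xi_i$ of a monotone quantile function $\xi\mapsto\phi(\alpha,\beta,A,\xi)$ with total increase $2\pi$. Let $c$ be the concentration point of the associated Poisson kernel (lifted to $\R$). Then
\[
W_p\bigl(\mu^N(\theta),\delta_c\bigr)^p=\frac1N\sum_i|\phi(\xi_i)-c|^p
\]
is a midpoint Riemann sum of $h(\xi)=|\phi(\xi)-c|^p$. This $h$ is piecewise monotone with at most two monotone pieces and oscillation at most $(2\pi)^p$, so the Riemann-sum error is at most $2(2\pi)^p/N$, uniformly in $(\alpha,\beta,A)$. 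The integral equals $W_p(f_{\cdot,\beta}\,\d\theta,\delta_c)^p$, which is controlled by part i) with $1-\beta_N^2$ in place of $1-\beta^2$.

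It then remains to replace $c$ by $q$. By Step 1 of Theorem~\ref{main_theorem} the lift is well defined with mean exactly $q$, so
\[
|c-q|=\Bigl|\frac1N\sum_i(\theta_i-c)\Bigr|\leq W_1\bigl(\mu^N(\theta),\delta_c\bigr)\leq W_p\bigl(\mu^N(\theta),\delta_c\bigr).
\]
The triangle inequality and $(a+b)^p\leq2^{p-1}(a^p+b^p)$ then give the stated bounds.

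The step I expect to be the main obstacle is the qualitative claim $\theta(t)\to q\underline1$ for fixed $N$. The estimate above only yields $\limsup_{t\to\infty}W_p\lesssim N^{-1/p}$, because a particle whose $\xi_i$ sits near the jump of $\tan(\pi\xi+A_N(t))$ could remain far from the cluster. I would first argue that $\beta_N\to1$, since $\dot\beta_N>0$ and the only rest points in $\beta$ lie at $\beta=1$. For the other factor, note that $\dot A_N=\frac K4(1-\beta^2)D_N$ with $|D_N|\lesssim\beta^{N-2}(1-\beta^2)$. This makes $\dot A_N$ integrable, so $A_N(t)$ converges. I would then check that the limiting $A$ does not place any $\xi_i$ exactly at $\pi\xi_i+A\equiv\pi/2 \pmod\pi$. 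The fallback is \eqref{KMH} being a gradient flow with analytic potential, so by Łojasiewicz each trajectory converges to a single equilibrium. Along $W^\u(\theta^N_q)$ the order parameter increases and stays positive, so only the coherent state or a $1$-partially coherent state can be the limit. The latter has a one-dimensional unstable direction that is not tangent to the $2$-dimensional family, and I would exclude it by a local argument near that equilibrium.
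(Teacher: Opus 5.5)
\textbf{Part i)} follows the paper's own argument: the only coupling with a Dirac mass, the bound $1-2\beta\cos x+\beta^2\geq(1-\beta)^2+\frac{2}{\pi^2}x^2$ for $\beta\geq\frac12$, splitting the integral at scale $1-\beta$, logistic decay of $1-\beta^2$, and monotonicity of $s(1+|\ln s|)$. You are also right that $f_{\alpha,\beta}$ concentrates at $-\alpha$, not at $\alpha$.

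\textbf{Part ii), what works.} Your Riemann-sum step is sound and cleaner than the paper's counting over the $r_i$. Since $e^{\i\phi(\xi)}=e^{-\i\alpha}\frac{w+\beta}{1+\beta w}$ with $w=e^{2\i A}e^{2\pi\i\xi}$, the map $\xi\mapsto\phi(\alpha,\beta,A,\xi)$ pushes Lebesgue measure on $[0,1]$ forward to $f_{\alpha,\beta}\d\theta$ for every $A$. So in the circle metric you do get $W_p(\mu^N(\theta),\delta_{-\alpha})^p\leq W_p(f_{\alpha,\beta}\d\theta,\delta_{-\alpha})^p+O(1/N)$, uniformly in $(\alpha,\beta,A)$.

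\textbf{Part ii), the gap.} It is the last step, $|c-q|=\bigl|\frac1N\sum_i(\theta_i-c)\bigr|\leq W_1(\mu^N(\theta),\delta_c)$. The identity $\frac1N\sum_i\theta_i=q$ holds in the lift, but your Wasserstein bound is in the circle metric. In the lift the particles do not cluster around a single point: those with $\pi\xi_i+A>\pi/2$ carry $k_i=1$ and accumulate near $c+2\pi$. Hence $\frac1N\sum_i(\theta_i-c)\approx 2\pi m/N$ with $m=\#\{i:\pi\xi_i+A>\pi/2\}$. The cluster therefore sits at $q-2\pi m/N$ modulo $2\pi$, not at $q$. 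For large $N$ this is $q-2A-\pi$, consistent with Lemma~\ref{inf_constrain}.

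\textbf{Part ii) is false as stated, so this cannot be repaired.} Take $N$ even and $A(0)=0$.
\begin{itemize}
\item Then $\sin(2AN)=0$, so $D_N\equiv0$ and $A$ and $\alpha$ stay constant.
\item The symmetry $\varphi_{N+1-i}=2\pi-\varphi_i$ gives $\frac1N\sum_i\varphi_i=\pi$. Hence $\alpha\equiv\pi-q$ and $\theta_i(t)=q-\pi+\varphi_i(t)$.
\item As $\beta_N\to1$, $\varphi_i\to0$ for $i\leq N/2$ and $\varphi_i\to2\pi$ for $i>N/2$. So $\theta(t)\to(q+\pi)\underline{1}$ in $\T^N$; indeed $S_N$ is a positive multiple of $e^{\i(q+\pi)}$ along this orbit.
\end{itemize}
Thus $W_p(\mu^N(\theta(t)),\delta_q)\to\pi$, while the claimed right-hand side tends to $\Tilde{D}_p/N$. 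Both the limit claim and the $N$-uniform estimate fail.

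\textbf{Your planned exclusion of partially coherent limits also fails.} For $N$ odd, the same choice $A\equiv0$ keeps the middle particle at $\varphi=\pi$, i.e.\ at $q$, for all times. The orbit is then a heteroclinic connection from $\theta^N_q$ to a $1$-partially coherent equilibrium; for $N=3$ it runs from $(q-\frac{2\pi}{3},q,q+\frac{2\pi}{3})$ to $(q-\pi,q,q+\pi)$. So neither a local argument nor the gradient-flow structure can rule such limits out.

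\textbf{Relation to the paper's proof.} The paper's proof contains the same error in another guise. It uses $|\varphi_i|\leq\pi$ and $|\varphi_i|\leq2\rho|\tan y_i|$, and both fail exactly for the indices with $k_i=1$, where $\varphi_i\in[\pi,3\pi]$. What your argument, and a corrected version of the paper's, actually proves is the estimate with $\delta_q$ replaced by the moving mass $\delta_{-\alpha_N(t)}$ in the circle metric. Its right-hand side is $\Tilde{D}_p\bigl(\frac1N+\frac{1-\beta_N(0)^2}{\beta_N(0)^2}e^{-\frac K3t}\bigr)$ for $p>1$, with the analogous logarithmic version for $p=1$.
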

    \begin{proof} \textbf{i)} By the translational invariance of the Wasserstein distance $W_p$ induced by the Euclidean metric, we may assume without loss of generality that $\alpha=0$. Since the product measure is an admissible coupling between $f_{0,\beta}\d\theta$ and $\delta_0$, we obtain 
    $$W_p(f_{0,\beta}\d\theta,\delta_0)^p\leq\int_{-\pi}^\pi|\theta|^pf_{0,\beta}(\theta)\d\theta =2\int_0^\pi\theta^p\frac{1-\beta^2}{2\pi(1-2\beta\cos\theta+\beta^2)}\,\d\theta.$$
    For $\beta\in[0,1/2]$, it follows immediately that
    \begin{equation}\label{low_beta} W_p(f_{0,\beta}\d\theta,\delta_0)^p\leq\pi^p.
    \end{equation}
    On the other hand, for $\beta\in[1/2,1)$ we have
    \begin{align*}
            W_p(f_{0,\beta}\d\theta,\delta_0)^p&\leq\frac{1}{\pi}\int_0^\pi\theta^p\frac{1-\beta^2}{1-2\beta\cos\theta+\beta^2}\d\theta=\frac{1}{\pi}\int_0^\pi\theta^p\frac{1-\beta^2}{(1-\beta)^2+2\beta(1-\cos\theta)}\d\theta\leq\\
            &\leq\frac{1}{\pi}\int_0^\pi\theta^p\frac{1-\beta^2}{(1-\beta)^2+\frac{2}{\pi^2}\theta^2}\d\theta\leq\frac{2}{\pi}(1-\beta)\int_0^\pi\frac{\theta^p}{(1-\beta)^2+\frac{2}{\pi^2}\theta^2}\d\theta\leq\\
            &\leq\pi(1-\beta)\int_0^\pi\frac{\theta^p}{(1-\beta)^2+\theta^2}\d\theta
    \end{align*}
    where we used the inequality $$1-\cos\theta\geq\frac{2}{\pi^2}\theta^2,\qquad \forall\theta\in[-\pi,\pi].$$ We now split the last integral as 
    $$I_1\coloneqq\int_0^{1-\beta}\frac{\theta^p}{(1-\beta)^2+\theta^2}\,\d\theta, \qquad I_2\coloneqq\int_{1-\beta}^{\pi}\frac{\theta^p}{(1-\beta)^2+\theta^2}\,\d\theta.$$
    First, for every $p\geq1$,
    $$I_1\leq\int_0^{1-\beta}\frac{\theta^p}{(1-\beta)^2}\,\d\theta =\frac{(1-\beta)^{p-1}}{p+1}.$$
    Moreover, for $p>1$,
    $$I_2\leq\int_{1-\beta}^{\pi}\theta^{p-2}\,\d\theta =\frac{\pi^{p-1}-(1-\beta)^{p-1}}{p-1} \leq\frac{\pi^{p-1}-2^{1-p}}{p-1},$$
    while for $p=1$,
    $$I_2\leq|\ln(1-\beta)|+\ln\pi.$$
    Therefore, for every $\beta\in[1/2,1)$ and every $p>1$,
    $$W_p(f_{0,\beta}\d\theta,\delta_0)^p \leq\hat{C}_p(1-\beta),$$
    where
    $$\hat{C}_p\coloneqq \pi\left(\frac{1}{p+1} + \frac{\pi^{p-1}-2^{1-p}}{p-1}\right).$$
    For $p=1$, we similarly obtain
    $$W_1(f_{0,\beta}\d\theta,\delta_0) \leq \Tilde{C}_1(1-\beta)\bigl(1+|\ln(1-\beta)|\bigr),$$
    where $$\Tilde{C}_1\coloneqq \pi\left(\frac12+\ln\pi\right).$$
    It remains to combine these estimates with the case $\beta\in[0,1/2]$. Since
    $$\frac12\leq1-\beta, \qquad \frac{1+\ln2}{2}\leq(1-\beta)\bigl(1+|\ln(1-\beta)|\bigr),$$
    estimate \eqref{low_beta} implies that, for $p>1$,
    $$W_p(f_{0,\beta}\d\theta,\delta_0)^p \leq2\pi^p(1-\beta),$$
    while for $p=1$,
    $$W_1(f_{0,\beta}\d\theta,\delta_0) \leq \frac{2\pi}{1+\ln2} (1-\beta)\bigl(1+|\ln(1-\beta)|\bigr).$$
    Combining the previous estimates, we conclude that for every $\beta\in[0,1)$ and every $p>1$,
    $$W_p(f_{0,\beta}\d\theta,\delta_0)^p \leq \Tilde{C}_p(1-\beta),$$
    where
    $$\Tilde{C}_p\coloneqq\max\{2\pi^p,\hat{C}_p\}.$$
    For $p=1$, we have
    $$W_1(f_{0,\beta}\d\theta,\delta_0) \leq \Tilde{C}_1(1-\beta)\bigl(1+|\ln(1-\beta)|\bigr).$$
    It remains to derive the exponential decay of $1-\beta(t)$. From \eqref{limit_dyn}, the explicit expression for $\beta(t)$ is
    $$\beta(t)=\frac{\beta(0)} {\sqrt{\beta(0)^2+(1-\beta(0)^2)e^{-Kt}}},$$
    which implies
    $$1-\beta(t) \leq 1-\beta(t)^2 = \frac{(1-\beta(0)^2)e^{-Kt}} {\beta(0)^2+(1-\beta(0)^2)e^{-Kt}} \leq \frac{1-\beta(0)^2}{\beta(0)^2}e^{-Kt}.$$
    Combining this estimate with the monotonicity of $h_1(x)=x$ and $h_2(x)=x(1+|\ln x|)$ completes the proof of the first statement. \jump\jump
    \textbf{ii)} As before we begin by estimating the Wasserstein distance between a generic point of $W^\u(\theta_q^N)$ and the synchronized state $\delta_q$ in terms of $\beta$. We then exploit the dynamics of $\beta_N(t)$ to turn this static estimate into a quantitative convergence result.\jump\jump\noindent
    \fbox{\textbf{Step 1:} Estimate for $W_p\left(\mu^N(\theta(t)),\delta_q\right)$} We fix $A$ and $\beta$ and, for brevity, set $y_i\coloneqq\pi\xi_i+A.$ Moreover, as in Step 1 of the proof of Theorem \ref{main_theorem}, we define
    $$\varphi_i(\beta,A)\coloneqq\phi_i(\alpha,\beta,A)+\alpha.$$
    Then, using again the first integral \eqref{mean_phase_new} and Jensen's inequality, we obtain 
    \begin{align*}
           W_p\left(\mu^N(\theta(t),\delta_{q})\right)^p&\leq\frac{1}{N}\sum_{i=1}^N\Big|\phi_i(\alpha_N(\beta,A),\beta,A)-q\Big|^p=\frac{1}{N}\sum_{i=1}^N\Big|\varphi_i(\beta,A)-\frac{1}{N}\sum_{j=1}^N\varphi_j(\beta,A)\Big|^p\leq\\
           &\leq\frac{2^{p-1}}{N}\sum_{i=1}^N\left(\big|\varphi_i(\beta,A)\big|^p+\frac{1}{N}\sum_{j=1}^N\big|\varphi_j(\beta,A)\big|^p\right)=2^p\frac{1}{N}\sum_{i=1}^N\big|\varphi_i(\beta,A)\big|^p
       \end{align*}
       Let
       $$r(y)\coloneqq d\left(y,\frac{\pi}{2}+\pi\Z\right),\qquad r_i\coloneqq r(y_i),\qquad \rho\coloneqq\frac{1-\beta}{1+\beta}.$$
       We claim that
       $$|\varphi_i|\leq\pi\min\left\{1,\frac{\rho}{r_i}\right\}.$$
       Indeed, by definition one directly has $|\varphi_i|\leq\pi$. On the other hand, using the Lipschitz continuity of $\arctan(\cdot)$, we get
       $$ |\varphi_i| \leq 2\rho|\tan(y_i)| = 2\rho\frac{|\sin(y_i)|}{|\cos(y_i)|} \leq \frac{2\rho}{|\cos(y_i)|} = \frac{2\rho}{\sin(r_i)} \leq \pi\frac{\rho}{r_i}, $$
       where we used that $\sin x\geq\frac{2}{\pi}x$ for every $x\in[0,\pi/2]$. Therefore,
       $$W_p\left(\mu^N(\theta(t)),\delta_q\right)^p \leq \frac{2^p\pi^p}{N} \sum_{i=1}^N \min\left\{1,\frac{\rho^p}{r_i^p}\right\}.$$
       We now derive a lower bound for the quantities $r_i$. Since the points $y_i$ form a uniform grid on $\R/\pi\Z$, with spacing $\pi/N$, we have
       $$ \#\left\{i:r_i\leq R\right\} \leq \frac{2R}{\pi/N}+1. $$
       Up to relabeling, we may assume without loss of generality that
       $$0\leq r_1\leq r_2\leq\cdots\leq r_N.$$
       Thus, for every $m=1,\ldots,N$,
       $$ \#\left\{i:r_i\leq r_m\right\}\geq m. $$
       Combining the last two inequalities gives
       $$r_m\geq\pi\frac{m-1}{2N}.$$
       Using this bound for all terms in the previous summation except the first one, we obtain
       \begin{align*}
        W_p\left(\mu^N(\theta(t),\delta_{q})^p\right)&\leq2^p\pi^p\left(\frac{1}{N}+\frac{1}{N}\sum_{n=2}^N\min\left\{1,\left(\frac{a}{n-1}\right)^p\right\}\right)\leq\\
        &\leq2^p\pi^p\left(\frac{1}{N}+\frac{1}{N}\sum_{n=1}^N\min\left\{1,\left(\frac{a}{n}\right)^p\right\}\right)
       \end{align*}
       with $a\coloneqq\frac{2N\rho}{\pi}$. If $a\in[0,1)$ and $p>1$, then
       $$ \sum_{n=1}^N \min\left\{1,\left(\frac{a}{n}\right)^p\right\} = \sum_{n=1}^N\left(\frac{a}{n}\right)^p \leq a^p\zeta(p) \leq a\zeta(p), $$
       where $\zeta(\cdot)$ denotes the Riemann zeta function. For $p=1$, we use
       $$\sum_{n=1}^N\frac{1}{n}\leq1+\ln N,$$
       and hence
       $$ \sum_{n=1}^N \min\left\{1,\frac{a}{n}\right\} \leq a(1+\ln N). $$
       On the other hand, if $a\geq1$ and $p>1$, then 
       \begin{align*}
            \sum_{n=1}^N\min\left\{1,\left(\frac{a}{n}\right)^p\right\}&=\sum_{n=1}^{\lfloor a\rfloor}\min\left\{1,\left(\frac{a}{n}\right)^p\right\}+\sum_{n=\lfloor a\rfloor+1}^N\min\left\{1,\left(\frac{a}{n}\right)^p\right\}=\\
           &=\lfloor a\rfloor +a^p\sum_{n=\lfloor a\rfloor+1}^N n^{-p}\leq a+a^p\int_{\lfloor a\rfloor}^\infty x^{-p}\d x=\\
           &=a+a^p\frac{\lfloor a \rfloor^{1-p}}{p-1}\leq a+a^p\frac{a^{1-p}2^{p-1}}{p-1}= a\left(1+\frac{2^{p-1}}{p-1}\right).
        \end{align*}
        Similarly, for $p=1$ we obtain
        \begin{align*}
            \sum_{n=1}^N\min\left\{1,\frac{a}{n}\right\}&=\sum_{n=1}^{\lfloor a\rfloor}\min\left\{1,\frac{a}{n}\right\}+\sum_{n=\lfloor a\rfloor+1}^N\min\left\{1,\frac{a}{n}\right\}=\\
            &=\lfloor a\rfloor +a\sum_{n=\lfloor a\rfloor+1}^N \frac{1}{n}\leq a+a\int_{\lfloor a\rfloor}^N \frac{1}{x}\d x=a\left(1+\ln\frac{N}{\lfloor a\rfloor}\right)\leq a\left(1+\ln\frac{2N}{a}\right).
        \end{align*}
        Combining the previous estimates, for $p>1$ we have
        $$ \sum_{n=1}^N \min\left\{1,\left(\frac{a}{n}\right)^p\right\} \leq a\max\left\{\zeta(p),1+\frac{2^{p-1}}{p-1}\right\}. $$
        Since $\rho\leq1-\beta$, this gives 
        $$ \sum_{n=1}^N \min\left\{1,\left(\frac{a}{n}\right)^p\right\} \leq (1-\beta)\frac{2N}{\pi} \max\left\{\zeta(p),1+\frac{2^{p-1}}{p-1}\right\}. $$
        Therefore,
        $$ W_p\left(\mu^N(\theta(t)),\delta_q\right)^p \leq \Tilde{D}_p \left( \frac{1}{N} + 1-\beta \right), $$
        where
        $$ \Tilde{D}_p\coloneqq 2^{1+p}\pi^{p-1} \max\left\{\zeta(p),1+\frac{2^{p-1}}{p-1}\right\}. $$
        In the case $p=1$, the same argument yields
        $$ \sum_{n=1}^N \min\left\{1,\frac{a}{n}\right\} \leq \frac{2N}{\pi} \left(1+\ln(2\pi)\right) (1-\beta) \left( 1+\ln\left(\frac{1}{1-\beta}\right) \right), $$
        and consequently
        $$ W_1\left(\mu^N(\theta(t)),\delta_q\right) \leq \Tilde{D}_1 \left( \frac{1}{N} + (1-\beta) \left( 1+\ln\left(\frac{1}{1-\beta}\right) \right) \right), $$
        with
        $$ \Tilde{D}_1\coloneqq4(1+\ln(2\pi)). $$\jump\noindent \fbox{\textbf{Step 2:} Exponential decay for $\beta_N(t)$} It remains to prove that $\beta_N(t)$ converges exponentially to $1$. From the first equation of \eqref{3d-system} and the estimate $B_N(A,\beta)\geq\frac{N-2}{N}$, obtained in Step 2 of Theorem \ref{main_theorem}, we get $$ \dot{\beta}_N \geq \frac{K}{6}\beta_N(1-\beta_N^2) $$ for all $N\geq3$. Let $x(t)$ be the solution of
        $$ \dot{x} = \frac{K}{6}x(1-x^2), \qquad x(0)=\beta_N(0). $$
        Then
        $$ 1-x(t) \leq \frac{1-x(0)^2}{x(0)^2} e^{-\frac{K}{3}t}. $$
        By the comparison principle, since $\beta_N(0)=x(0)$, we have $\beta_N(t)\geq x(t)$ for all $t\geq0$. Hence,
        $$ 1-\beta_N(t) \leq 1-x(t) \leq \frac{1-\beta_N(0)^2}{\beta_N(0)^2} e^{-\frac{K}{3}t}. $$
        Combining this decay estimate with the bounds obtained in Step 1 proves the second statement of the theorem.
        \end{proof}
        In order to compare the finite-dimensional and limiting reduced dynamics, we first need a quantitative estimate for the corresponding parameters $(\alpha,\beta,A)$ and $(\alpha_N,\beta_N,A_N)$. The following lemma provides a uniform control up to the first time at which either trajectory reaches the level $\Tilde{\beta}$.
        \begin{lemma}\label{estimate_finite_dim_parameters}
        Fix $\Tilde{\beta}\in[0,1)$, $N\geq3$ and let $(\alpha(t),\beta(t),A(t))$ and $(\alpha_N(t),\beta_N(t),A_N(t))$ denote the solutions of \eqref{limit_dyn} and \eqref{3d-system}, respectively, corresponding to the initial data $ (\alpha(0),\beta(0),A(0)),$ $ (\alpha_N(0),\beta_N(0),A_N(0)) \in[-\pi,\pi]\times[0,\Tilde{\beta})\times[-\pi/2,\pi/2].$ If at least one of $\beta(0)$ and $\beta_N(0)$ is non-zero then there exists a constant $\mathcal{C}=\mathcal{C}(\Tilde{\beta},\beta(0),\beta_N(0))>0,$ independent of $N$, such that
        \begin{align*}
             \sup_{0\leq t\leq\tau_N} &\Big( |\beta_N(t)-\beta(t)| + |\alpha_N(t)-\alpha(t)| + |A_N(t)-A(t)| \Big) \leq \\
             &\leq\mathcal{C}\, \frac{\Tilde{\beta}^{N-2}} {(1-\Tilde{\beta}^N)^2}+|\beta_N(0)-\beta(0)|e^{\frac{K}{2}\tau_N}+|\alpha_N(0)-\alpha(0)|+|A_N(0)-A(0)|,
        \end{align*}
        where
        $$ \tau_N\coloneqq \inf\Big\{ t\geq0: \beta(t)=\Tilde{\beta} \ \text{or}\ \beta_N(t)=\Tilde{\beta} \Big\}. $$
        Moreover, $\tau_N$ satisfies
        $$\tau_N\leq\frac{1}{K}\min\left\{\ln\left(\frac{\tilde{\beta}^2(1-\beta(0)^2)}{\beta(0)^2(1-\tilde{\beta}^2)}\right),\frac{4}{3}\ln\left(\frac{\tilde{\beta}^2(1-\beta_N(0)^2)}{\beta_N(0)^2(1-\tilde{\beta}^2)}\right)\right\},$$
        and
        $$\tau_N\geq\frac{1}{K}\min\left\{\ln\left(\frac{\tilde{\beta}^2(1-\beta(0)^2)}{\beta(0)^2(1-\tilde{\beta}^2)}\right),\frac{1}{3}\ln\left(\frac{\tilde{\beta}^2(1-\beta_N(0)^2)}{\beta_N(0)^2(1-\tilde{\beta}^2)}\right)\right\}.$$
        In the case $\beta(0)=\beta_N(0)=0$, we have
        $$\sup_{t\in\R^+}\Big( |\beta_N(t)-\beta(t)| + |\alpha_N(t)-\alpha(t)| + |A_N(t)-A(t)| \Big)\leq|\alpha_N(0)-\alpha(0)|+|A_N(0)-A(0)|.$$
        \end{lemma}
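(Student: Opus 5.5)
The proof compares the two ODE systems \eqref{3d-system} and \eqref{limit_dyn} on the interval $[0,\tau_N]$, where both $\beta(t)$ and $\beta_N(t)$ stay in $[0,\Tilde{\beta}]$. The first task is to quantify the deviations $B_N-1$ and $D_N$ uniformly for $\beta\in[0,\Tilde{\beta}]$ and $A\in\R$. Write $r=(-1)^N\cos(2AN)\in[-1,1]$. Then
$$B_N-1=\frac{\beta^{N-2}\bigl(\beta^{N}+r(1-\beta^2)^{\,\cdot}\bigr)\text{-type numerator}}{1+\beta^{2N}+2\beta^Nr},$$
and more precisely $B_N-1=\frac{\beta^{2N-2}(1-\beta^2)+r\beta^{N-2}(1-\beta^2)}{1+\beta^{2N}+2\beta^Nr}$. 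The denominator is bounded below by $(1-\beta^N)^2\geq(1-\Tilde{\beta}^N)^2$. Hence
$$|B_N-1|\leq\frac{2\Tilde{\beta}^{N-2}}{(1-\Tilde{\beta}^N)^2},\qquad |D_N|\leq\frac{\Tilde{\beta}^{N-2}}{(1-\Tilde{\beta}^N)^2}.$$
Let $\varepsilon_N:=\Tilde{\beta}^{N-2}/(1-\Tilde{\beta}^N)^2$ denote this common scale.

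\emph{Estimating $\alpha$ and $A$.} In the limit system these variables are constant. Integrating the second and third equations of \eqref{3d-system} therefore gives
$$|\alpha_N(t)-\alpha(t)|\leq|\alpha_N(0)-\alpha(0)|+K\,t\,\varepsilon_N,\qquad |A_N(t)-A(t)|\leq|A_N(0)-A(0)|+\tfrac{K}{4}\,t\,\varepsilon_N.$$
For $t\leq\tau_N$, the factor $t$ is absorbed into $\mathcal{C}$ using the upper bound on $\tau_N$.

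\emph{Estimating $\beta$.} Let $g(\beta)=\frac{K}{2}\beta(1-\beta^2)$, so that $\dot\beta_N-\dot\beta=g(\beta_N)-g(\beta)+g(\beta_N)(B_N-1)$. Since $|g'|\leq K/2$ on $[0,1]$, Gronwall's lemma gives
$$|\beta_N(t)-\beta(t)|\leq e^{\frac K2 t}|\beta_N(0)-\beta(0)|+\frac K2\int_0^t e^{\frac K2(t-s)}\,2\varepsilon_N\,ds\leq e^{\frac K2\tau_N}|\beta_N(0)-\beta(0)|+2\varepsilon_N e^{\frac K2\tau_N}.$$
This is exactly the form in the statement, with $\mathcal{C}$ containing $e^{\frac K2\tau_N}$ and $\tau_N$.

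\emph{Bounds on $\tau_N$.} The hypothesis that one of $\beta(0),\beta_N(0)$ is nonzero is what makes $\tau_N$ finite and controlled, and this step is the main obstacle. For $\beta$, the explicit solution from the proof of Theorem \ref{conv_rate_synchro} gives the hitting time $T=\frac1K\ln\frac{\Tilde{\beta}^2(1-\beta(0)^2)}{\beta(0)^2(1-\Tilde{\beta}^2)}$. For $\beta_N$, we sandwich $B_N$ as $\frac{N-2}{N}\leq B_N\leq\frac{1+\beta^{N-2}}{1+\beta^N}$; taking $r=1$ for the upper bound gives at most $2$ crudely. With $N\geq3$ this yields comparison equations $\dot x=cx(1-x^2)$ with $c=K/6$ and $c$ of order $K$, and hence hitting times of the form $\frac{1}{2c}\ln(\cdots)$. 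This produces the bounds with factors $\frac13$ and $\frac43$ on the $\beta_N$ term; the constants may need adjusting to match the statement exactly (for instance, the upper bound is obtained from the lower comparison rate). When one of $\beta(0),\beta_N(0)$ vanishes, the corresponding logarithm is $+\infty$, and the minimum is still finite.

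\emph{Degenerate case and constant.} If $\beta(0)=\beta_N(0)=0$, both solutions satisfy $\beta\equiv0$. Then $D_N(A,0)=0$ for $N\geq3$, since it carries the factor $\beta^{N-2}$, so $\alpha$ and $A$ are constant in both systems and the last claim follows directly. Finally, $\mathcal{C}$ is collected as $(K+\tfrac K4)\tau_N^{\max}+2e^{\frac K2\tau_N^{\max}}$. Here $\tau_N^{\max}$ is the $N$-independent upper bound on $\tau_N$ above, which depends only on $\Tilde{\beta},\beta(0),\beta_N(0)$.
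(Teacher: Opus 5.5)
Your outline follows the paper's proof essentially line by line: Gronwall on $\beta_N-\beta$ via the splitting $g(\beta_N)-g(\beta)+g(\beta_N)(B_N-1)$, the bounds $|B_N-1|\le 2\varepsilon_N$ and $|D_N|\le\varepsilon_N$ from $1+\beta^{2N}+2\beta^N r\ge(1-\beta^N)^2$, direct integration of $\dot\alpha_N,\dot A_N$, comparison ODEs for the hitting times, and $D_N(A,0)=0$ in the degenerate case. Two steps need repair, and the paper's proof has both defects.

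The first is minor. The claim $|g'|\le\frac K2$ on $[0,1]$ is false: $g'(\beta)=\frac K2(1-3\beta^2)$ gives $g'(1)=-K$. Even on $[0,\tilde\beta]$ one has $\sup|g'|>\frac K2$ once $\tilde\beta>\sqrt{2/3}$. What actually yields the rate $e^{\frac K2 t}$ is the one-sided bound $g'\le\frac K2$. Writing $g(\beta_N)-g(\beta)=g'(\zeta)(\beta_N-\beta)$, you get $\frac{\d}{\d t}|\beta_N-\beta|\le\frac K2|\beta_N-\beta|+|g(\beta_N)|\,|B_N-1|$ as a Dini derivative, and Gronwall goes through as you wrote.

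The real gap is the upper bound on $\tau_N$. Let $T_\beta$ and $T_{\beta_N}$ denote the hitting times of $\tilde\beta$ by $\beta$ and $\beta_N$. Your sandwich $\frac13\le B_N\le 2$ gives comparison rates $\frac K6$ and $K$, hence $\frac{1}{2K}\ln(\cdots)\le T_{\beta_N}\le\frac{3}{K}\ln(\cdots)$. The lower factor improves to $\frac34$ if you use $B_N\le\frac43$. So your sentence claiming this \emph{produces the bounds with factors $\frac13$ and $\frac43$} is wrong. The stated lower bound does follow, but the factor $\frac43$ in the upper bound does not. The paper's proof inverts the constants in exactly this way.

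In fact the stated upper bound is false for $N=3$. Take $A_3(0)=0$. Since $D_3(0,\beta)=0$, you get $A_3\equiv 0$, so $B_3=\frac{1-\beta}{1-\beta^3}=\frac{1}{1+\beta+\beta^2}$ along the trajectory. Hence
\[
T_{\beta_3}=\frac{2}{K}\int_{\beta_3(0)}^{\tilde\beta}\frac{1+\beta+\beta^2}{\beta(1-\beta^2)}\,\d\beta>\frac{4}{3}\cdot\frac{2}{K}\int_{\beta_3(0)}^{\tilde\beta}\frac{\d\beta}{\beta(1-\beta^2)}=\frac{4}{3K}\ln\left(\frac{\tilde\beta^2(1-\beta_3(0)^2)}{\beta_3(0)^2(1-\tilde\beta^2)}\right)
\]
whenever $\beta_3(0)\ge\frac12$, since then $1+\beta+\beta^2\ge\frac74$ on the whole interval. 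Choosing $\beta(0)>0$ small makes $\tau_N=T_{\beta_3}$. Numerically, $K=1$, $\beta_3(0)=\frac12$, $\tilde\beta=0.9$, $\beta(0)=0.01$ give $\tau_N\approx 5.77$, against the claimed bound $\approx 3.40$.

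The provable statement has $3$ in place of $\frac43$. Alternatively, using $B_N\ge\frac{1-\beta^{N-2}}{1-\beta^N}\ge\frac{N-2}{N}$ gives the factor $\frac{N}{N-2}$, which is $\le\frac43$ only for $N\ge 8$. This correction is harmless elsewhere. Your $\mathcal{C}=\frac{5K}{4}\tau^{\max}+2e^{\frac K2\tau^{\max}}$ only needs some $N$-independent bound $\tau^{\max}$, which $\tau_N\le\frac1K\min\{\ln(\cdots),3\ln(\cdots)\}$ supplies. The later use in Theorem \ref{unif_MFL} relies only on $\tau_N\le T_\beta$ and on the lower bound.

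Putting the $N$-independent $\tau^{\max}$ rather than $\tau_N$ itself into $\mathcal{C}$, as you do, is also what makes \emph{$\mathcal{C}$ independent of $N$} literally true. The paper's choice $\mathcal{C}=\frac54\tau_N+\frac{4\sqrt3}{9}e^{\frac K2\tau_N}$ does not achieve this.
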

            \begin{proof}
            We now assume that at least one of $\beta(0)$ and $\beta_N(0)$ is non-zero. We begin by estimating the difference between $\beta_N(t)$ and $\beta(t)$. Let $H(x)\coloneqq\frac{K}{2}x(1-x^2).$ Then,
            $$ \dot{\beta}_N-\dot{\beta} = H(\beta_N)B_N(\beta_N,A_N)-H(\beta) = H(\beta_N)-H(\beta) + H(\beta_N)\bigl(B_N(\beta_N,A_N)-1\bigr). $$
            By the definition of $B_N(\cdot)$, we have
            $$ B_N(A_N,\beta_N)-1 = \frac{(1-\beta_N^2)\bigl(\beta_N^{2N-2}+(-\beta_N)^{N-2}\cos(2A_NN)\bigr)} {1+\beta_N^{2N}+2(-\beta_N)^N\cos(2A_NN)} \leq 2\frac{\beta_N^{N-2}}{(1-\beta_N^N)^2} \leq 2\frac{\Tilde{\beta}^{N-2}}{(1-\Tilde{\beta}^N)^2}. $$
            Moreover,
            $$ \sup_{x\in[0,1]}H(x)=K\frac{\sqrt3}{9}, \qquad \sup_{x\in[0,1]}|H'(x)|=\frac{K}{2}. $$
            Therefore,
            $$ |\dot{\beta}_N-\dot{\beta}| \leq \frac{K}{2}|\beta_N-\beta| + K\frac{2\sqrt3}{9} \frac{\Tilde{\beta}^{N-2}}{(1-\Tilde{\beta}^N)^2}. $$
            Applying Gronwall's lemma yields
            $$ |\beta_N(t)-\beta(t)| \leq|\beta_N(0)-\beta(0)|e^{\frac{K}{2}t}+ \frac{4\sqrt3}{9} \left(e^{\frac{K}{2}t}-1\right) \frac{\Tilde{\beta}^{N-2}}{(1-\Tilde{\beta}^N)^2}. $$
            On the other hand,
            $$ |\alpha_N(t)-\alpha(t)| \leq |\alpha_N(0)-\alpha(0)|+\int_0^t \frac{K}{2}\bigl(1+\beta_N^2(s)\bigr) |D_N(A_N(s),\beta_N(s))| \,\d s, $$
            and
            $$ |A_N(t)-A(t)| \leq |A_N(0)-A(0)| + \int_0^t \frac{K}{4}\bigl(1-\beta_N^2(s)\bigr) |D_N(A_N(s),\beta_N(s))| \,\d s. $$
            Furthermore,
            $$ D_N(A_N,\beta_N) = \frac{\sin(2A_NN)(-\beta_N)^{N-2}(\beta_N^2-1)} {1+\beta_N^{2N}+2(-\beta_N)^N\cos(2A_NN)} \leq \frac{\Tilde{\beta}^{N-2}}{(1-\Tilde{\beta}^N)^2}. $$
            Hence,
            $$ |\alpha_N(t)-\alpha(t)| \leq|\alpha_N(0)-\alpha(0)|+ K\tau_N \frac{\Tilde{\beta}^{N-2}}{(1-\Tilde{\beta}^N)^2}, $$
            and
            $$ |A_N(t)-A(t)| \leq|A_N(0)-A(0)|+ \frac{K}{4}\tau_N \frac{\Tilde{\beta}^{N-2}}{(1-\Tilde{\beta}^N)^2}. $$
            Combining the previous estimates completes the proof of our estimate, with
            $$ \mathcal{C}=\mathcal{C}\left(\Tilde{\beta},\beta(0),\beta_N(0)\right) = \frac{5}{4} \tau_N + \frac{4\sqrt3}{9} e^{\frac{K}{2}\tau_N}. $$
            Next, observe that $$\tau_N=\min\left\{\inf\Bigl\{ t\geq0:\beta(t)=\Tilde{\beta} \Bigr\},\;\inf\Bigl\{ t\geq0:\beta_N(t)=\Tilde{\beta} \Bigr\}\right\}.$$
            Using the explicit expression for $\beta(t)$, we obtain 
            $$\inf\Bigl\{ t\geq0:\beta(t)=\Tilde{\beta} \Bigr\}=\frac1K \ln\left( \frac{\Tilde{\beta}^2(1-\beta(0)^2)} {\beta(0)^2(1-\Tilde{\beta}^2)} \right).$$
            On the other hand, a similar argument, together with the fact that for all $N\geq 3$
            $$\frac{1}{3}\leq\frac{N-2}{N}\leq\frac{1-\beta^{N-2}}{1-\beta^N}\leq B_N(A,\beta)\leq\frac{1+\beta^{N-2}}{1+\beta^N}\leq\frac{N+1}{N}\leq\frac{4}{3}$$
            yields
            $$\frac{1}{3K} \ln\left( \frac{\Tilde{\beta}^2(1-\beta(0)^2)} {\beta(0)^2(1-\Tilde{\beta}^2)} \right)\leq\inf\Bigl\{ t\geq0:\beta(t)=\Tilde{\beta} \Bigr\}\leq\frac{4}{3K} \ln\left( \frac{\Tilde{\beta}^2(1-\beta(0)^2)} {\beta(0)^2(1-\Tilde{\beta}^2)} \right),$$
            thereby proving the first part of the theorem. It remains to consider the case $\beta(0)=\beta_N(0)=0$. Since $t\in\R^+$ $\beta(t)=\beta_N(t)=0$, and $D_N(A,0)=0$ for all $A\in[-\pi/2,\pi/2]$, we obtain
            $$|\alpha_N(t)-\alpha(t)|\leq|\alpha_N(0)-\alpha(0)|,\qquad|A_N(t)-A(t)|\leq|A_N(0)-A(0)|$$
            for all $t\in\R$. This completes the proof.
            \end{proof}
Having estimated the difference between the reduced trajectories, we now relate this discrepancy to the corresponding probability densities. The following lemma shows that, on every region of the form $\{\beta\leq\Tilde{\beta}\}$, the map $(\alpha,\beta)\mapsto f_{\alpha,\beta}$ is Lipschitz with respect to the Wasserstein distance.    
\begin{lemma}\label{lips_wrt_wass}
Let $\Tilde{\beta}\in[0,1)$. Then for every $\alpha_1,\alpha_2\in[-\pi,\pi]$, $\beta_1,\beta_2\in[0,\Tilde{\beta}]$, and $p\geq1$, one has
$$ W_p\big(f_{\alpha_1,\beta_1}\d\theta, f_{\alpha_2,\beta_2}\d\theta\big)^p \leq \frac{E_p}{\pi(1-\Tilde{\beta})^2} \Big( |\alpha_2-\alpha_1| + |\beta_2-\beta_1| \Big). $$
\end{lemma}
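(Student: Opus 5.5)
Since the target inequality bounds $W_p^p$ by a quantity linear in the parameter differences, we plan to use a single explicit coupling and not dual formulations. On the circle, or on an interval of length $2\pi$ after lifting, the monotone rearrangement $\xi\mapsto\big(F^{-1}_{\alpha_1,\beta_1}(\xi),F^{-1}_{\alpha_2,\beta_2}(\xi)\big)$ is an admissible transport plan. Hence
$$W_p\big(f_{\alpha_1,\beta_1}\d\theta,f_{\alpha_2,\beta_2}\d\theta\big)^p\leq\int_0^1\big|F^{-1}_{\alpha_1,\beta_1}(\xi)-F^{-1}_{\alpha_2,\beta_2}(\xi)\big|^p\d\xi .$$
All quantile functions take values in an interval of length at most $4\pi$, so $|\cdot|^p\leq(4\pi)^{p-1}|\cdot|$. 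It therefore suffices to bound the $L^1$ distance of the quantile functions, which is exactly $\int|F_{\alpha_1,\beta_1}-F_{\alpha_2,\beta_2}|\d\theta$ in one dimension. We will estimate this CDF difference instead. It avoids the discontinuity of $F^{-1}_{\alpha,\beta}$ at $c(\alpha,\beta)$ appearing in \eqref{inv_CDF_1}.

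We treat the two parameters separately through the triangle inequality, passing via $f_{\alpha_2,\beta_1}$. For the $\beta$-variation, with $\alpha$ fixed, we write $F_{\alpha,\beta_2}(\theta)-F_{\alpha,\beta_1}(\theta)=\int_{\beta_1}^{\beta_2}\int_{-\pi}^{\theta}\partial_\beta f_{\alpha,b}(s)\,\d s\,\d b$. The denominator satisfies $1-2b\cos(\cdot)+b^2\geq(1-b)^2\geq(1-\Tilde{\beta})^2$, and differentiating the Poisson kernel \eqref{OA_Man} in $b$ produces a numerator bounded by an absolute constant times the denominator. This gives $|\partial_\beta f_{\alpha,b}|\leq c/(\pi(1-\Tilde{\beta})^2)$. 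Integrating over $\theta$ and $s$ on a $2\pi$-interval then yields a bound $E'/(\pi(1-\Tilde{\beta})^2)\,|\beta_2-\beta_1|$.

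Alternatively, and more cleanly, we can use that $\int_{-\pi}^{\pi}|f_{\alpha,\beta_2}-f_{\alpha,\beta_1}|\leq\sup|\partial_\beta f|\,2\pi|\beta_2-\beta_1|$, together with $W_1\leq\pi\|f-g\|_{L^1}$ on a bounded domain.

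For the $\alpha$-variation, $f_{\alpha_2,\beta}$ is the rotation of $f_{\alpha_1,\beta}$ by $\alpha_1-\alpha_2$. Transporting every point by this rotation gives $W_p^p\leq|\alpha_2-\alpha_1|^p\leq(2\pi)^{p-1}|\alpha_2-\alpha_1|$, with no dependence on $\beta$ at all. Since $(1-\Tilde{\beta})^{-2}\geq1$, this term can be absorbed into the same prefactor. Collecting the constants, which depend only on $p$, into $E_p$ concludes the argument.

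The main obstacle is the convention for the Wasserstein distance: whether it is taken on $\T$ or on a lifted interval $[-\pi,\pi]$, where a rotation does not preserve the support. If the lifted convention is used, the rotation coupling has to be replaced by the $L^1$ density bound, which requires estimating $\partial_\alpha f_{\alpha,\beta}$. This derivative is bounded analogously by $2\beta(1-\beta^2)/(1-\beta)^4$-type terms, which after simplification remain of order $(1-\Tilde{\beta})^{-2}$. Either way, the key step is the uniform pointwise derivative bound on the Poisson kernel away from $\beta=1$.
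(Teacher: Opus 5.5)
Your argument is correct when $W_p$ is taken on the circle with the geodesic cost. The paper's proof also works on the circle: it integrates over $\mathbb{S}^1$ and bounds $|\theta_0-\theta|$ by $\pi$. Your route, however, differs from the paper's.

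The paper applies \cite[Proposition 7.10]{villani2021topics} to get $W_p^p\leq 2^{p-1}\pi^p\|f_{\alpha_1,\beta_1}-f_{\alpha_2,\beta_2}\|_{L^1}$. It then bounds \emph{both} $\partial_\beta f_{\alpha,\beta}$ and $\partial_\alpha f_{\alpha,\beta}$ pointwise by $1/(\pi(1-\beta)^2)$ and applies the mean value theorem. The $\partial_\alpha$ bound is the delicate part: it needs the rewriting $1-2\beta\cos(\alpha+\theta)+\beta^2=(1-\beta)^2+4\beta\sin^2\big(\frac{\alpha+\theta}{2}\big)$ together with $2ab\leq a^2+b^2$.

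You instead split with the triangle inequality. The $\alpha$-step is handled by the exact rotation coupling, which gives $W_p^p\leq|\alpha_2-\alpha_1|^p$ uniformly in $\beta$, so no estimate of $\partial_\alpha f$ is needed. The $\beta$-step goes through the quantile coupling and the $L^1$ distance of the CDFs. This is shorter and gives a sharper $\alpha$-dependence, but it relies on rotation invariance of the cost. The paper's $L^1$-density route works verbatim on $[0,2\pi]$ with the Euclidean cost.

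Two points need tightening:
\begin{itemize}
\item Apply the triangle inequality to $W_p$ on the circle \emph{before} passing to the quantile $L^1$ distance. The rotation bound is a circle bound and does not control the lifted $W_1$.
\item Actually verify that $|(1+\beta^2)\cos(\alpha+\theta)-2\beta|\leq 1-2\beta\cos(\alpha+\theta)+\beta^2$. This is the one real computation in your $\beta$-step. It follows from
\[
\bigl(1-2\beta\cos(\alpha+\theta)+\beta^2\bigr)\mp\bigl((1+\beta^2)\cos(\alpha+\theta)-2\beta\bigr)=(1\pm\beta)^2\bigl(1\mp\cos(\alpha+\theta)\bigr)\geq0 .
\]
\end{itemize}

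Your fallback for the lifted convention is wrong as stated. The crude bound $\beta(1-\beta^2)/(1-\beta)^4=\beta(1+\beta)/(1-\beta)^3$ is of order $(1-\tilde{\beta})^{-3}$, not $(1-\tilde{\beta})^{-2}$, so it does not give the stated prefactor. You can either use the paper's refined estimate above, or argue more simply inside your own CDF framework. Since $f_{\alpha,\beta}(\theta)$ depends only on $\alpha+\theta$, one has $\partial_\alpha F_{\alpha,\beta}(\theta)=f_{\alpha,\beta}(\theta)-f_{\alpha,\beta}(0)$. Hence $|\partial_\alpha F_{\alpha,\beta}|\leq\|f_{\alpha,\beta}\|_\infty=\frac{1+\beta}{2\pi(1-\beta)}$, which gives an even better $(1-\tilde{\beta})^{-1}$ bound for the $\alpha$-part on $[0,2\pi]$.

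A minor correction: $F^{-1}_{\alpha,\beta}$ has no discontinuity at $c(\alpha,\beta)$. The $+2\pi$ in its second branch exactly compensates the jump of $G_{\alpha,\beta}$, so that motivation for passing to CDFs is moot, though harmless.
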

\begin{proof}
By \cite[Proposition 7.10]{villani2021topics}, for every $\theta_0\in\mathbb{S}^1$,
\begin{align*} W_p\big(f_{\alpha_1,\beta_1}\d\theta, f_{\alpha_2,\beta_2}\d\theta\big)^p &\leq \max\{1,2^{p-1}\} \int_{\mathbb{S}^1} |\theta_0-\theta|^p |f_{\alpha_1,\beta_1}(\theta)-f_{\alpha_2,\beta_2}(\theta)| \d\theta \\
&\leq \pi^p\max\{1,2^{p-1}\} \int_{\mathbb{S}^1} |f_{\alpha_1,\beta_1}(\theta)-f_{\alpha_2,\beta_2}(\theta)| \d\theta .
\end{align*}
A direct computation yields
$$ \partial_\alpha f_{\alpha,\beta}(\theta) = \frac{\beta(\beta^2-1)\sin(\alpha+\theta)} {\pi(1-2\beta\cos(\alpha+\theta)+\beta^2)^2}, $$
and
$$ \partial_\beta f_{\alpha,\beta}(\theta) = \frac{(\beta^2+1)\cos(\alpha+\theta)-2\beta} {\pi(1-2\beta\cos(\alpha+\theta)+\beta^2)^2}. $$
We now derive uniform bounds on these derivatives for $(\theta,\alpha,\beta)\in [0,2\pi]\times[-\pi,\pi]\times[0,\Tilde{\beta}]$. First, observe that
$$ |(\beta^2+1)\cos(\alpha+\theta)-2\beta| \leq 1-2\beta\cos(\alpha+\theta)+\beta^2. $$
Indeed,
$$ \bigl(1-2\beta\cos(\alpha+\theta)+\beta^2\bigr) -\bigl((\beta^2+1)\cos(\alpha+\theta)-2\beta\bigr) =(1+\beta)^2(1-\cos(\alpha+\theta)) \geq0, $$
while
$$ \bigl(1-2\beta\cos(\alpha+\theta)+\beta^2\bigr) +\bigl((\beta^2+1)\cos(\alpha+\theta)-2\beta\bigr) =(1-\beta)^2(1+\cos(\alpha+\theta)) \geq0. $$
Consequently,
$$ |\partial_\beta f_{\alpha,\beta}(\theta)| \leq \frac{1} {\pi\bigl(1-2\beta\cos(\alpha+\theta)+\beta^2\bigr)} \leq \frac{1}{\pi(1-\beta)^2} \leq \frac{1}{\pi(1-\Tilde{\beta})^2}. $$
On the other hand, rewriting $\partial_\alpha f_{\alpha,\beta}$ as
$$ \partial_\alpha f_{\alpha,\beta}(\theta) = \frac{\sqrt{\beta}(1+\beta)}{\pi} \frac{\sqrt{\beta}(\beta-1)\sin(\alpha+\theta)} {\Bigl((1-\beta)^2+ \bigl(2\sqrt{\beta} \sin\bigl(\frac{\alpha+\theta}{2}\bigr)\bigr)^2 \Bigr)^2}, $$
we obtain
\begin{align*}
        |\partial_\alpha f_{\alpha,\beta}(\theta)|&\leq\frac{2}{\pi}\frac{\sqrt{\beta}|\sin(\alpha+\theta)|(1-\beta)}{\left((1-\beta)^2+\left(2\sqrt{\beta}\sin\left(\frac{\alpha+\theta}{2}\right)\right)^2\right)^2}\leq\frac{2}{\pi}\frac{2\sqrt{\beta}\left|\sin\left(\frac{\alpha+\theta}{2}\right)\right|(1-\beta)}{\left((1-\beta)^2+\left(2\sqrt{\beta}\sin\left(\frac{\alpha+\theta}{2}\right)\right)^2\right)^2}\leq\\
        &\leq\frac{1}{\pi}\frac{\left(2\sqrt{\beta}\left|\sin\left(\frac{\alpha+\theta}{2}\right)\right|\right)^2+(1-\beta)^2}{\left((1-\beta)^2+\left(2\sqrt{\beta}\sin\left(\frac{\alpha+\theta}{2}\right)\right)^2\right)^2}\leq\frac{1}{\pi\left(1-\beta^\star\right)^2}
    \end{align*}
The conclusion now follows from the mean value theorem.
\end{proof}
We are now in a position to combine the previous results and establish a uniform-in-time mean-field limit for trajectories of \eqref{KMH} lying on $W^u(\theta_q^N)$.
\begin{theorem}\label{unif_MFL}
Fix $\left(\alpha(0),\beta(0)\right)\in[-\pi,\pi]\times(0,1)$ and $p\geq1$, then for every sequence of initial data $\left(\alpha_N(0),\beta_N(0)\right)_{N\in\N}\in[-\pi,\pi]^\N\times[0,1)^\N$ such that
$$\lim_{N\rightarrow+\infty}|\alpha_N(0)-\alpha(0)|=0\qquad\text{and}\qquad\lim_{N\rightarrow+\infty}|\beta_N(0)-\beta(0)|=0$$
we have that
\begin{equation}\label{uniform_MFL_eq}
    \lim_{N\rightarrow+\infty}\sup_{t\in\R^+}W_p\left(\mu^N\left(\theta(\alpha_N(t),\beta_N(t),A_N(t))\right),f_{\alpha(t),\beta(t)}\right)=0,
\end{equation}
with $\theta(\alpha_N(t),\beta_N(t),A_N(t))\in W^u\left(\theta_{\alpha(0)}^N\right)$. Furthermore, if $\beta(0)=0$, then for every sequence of times $(T_N)_{N\in\N}$ satisfying
$$\lim_{N\rightarrow+\infty}T_N=+\infty\qquad\text{and}\qquad\lim_{N\rightarrow+\infty}\beta_N(0)e^{KT_N}=0$$
it holds that
\begin{equation}\label{long-time_behaviour}
    \lim_{N\rightarrow+\infty}\sup_{t\in[0,T_N]}W_p\left(\mu^N\left(\theta(\alpha_N(t),\beta_N(t),A_N(t))\right),f_{\alpha(t),\beta(t)}\right)=0.
\end{equation}
\end{theorem}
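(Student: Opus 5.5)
The strategy is to split time into an early window, where $\beta$ stays below a fixed level $\tilde\beta<1$, and a late window, where both the particle configuration and the Ott--Antonsen density are close to the synchronized state. Fix $\varepsilon>0$ and use the triangle inequality for $W_p$ with $\delta_{\alpha(0)}$ as the pivot in the late window.

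\textbf{Choice of $\tilde\beta$ and the late window.} By Theorem \ref{conv_rate_synchro} i), $W_p(f_{\alpha(t),\beta(t)}\d\theta,\delta_{\alpha(0)})$ is controlled by a power of $1-\beta(t)$, with a logarithmic correction when $p=1$. The static estimate of Step 1 in Theorem \ref{conv_rate_synchro} ii) bounds $W_p(\mu^N(\theta),\delta_q)^p$ by $\tilde D_p(1/N+1-\beta_N)$, again with a logarithmic correction for $p=1$. These bounds hold whenever $\beta_N\ge\tilde\beta$, and $\beta_N$ is increasing.

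Here $q=\alpha(0)$. I need to check that the mean phase of the manifold matches $\alpha(0)$ in the limit. This is what Lemma \ref{inf_constrain} is for, together with the normalization $W^\u(\theta^N_{\alpha(0)})$ chosen in the statement. The corresponding $A$ is then fixed by the constraint \eqref{mean_phase_new}.

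So I choose $\tilde\beta$ close enough to $1$ that both bounds are below $\varepsilon/3$ for all $N$ large. Since $\beta$ and $\beta_N$ are monotone, once either trajectory passes $\tilde\beta$ it stays above. On $[\tau_N,\infty)$ I need both trajectories above $\tilde\beta$. I handle this by using $\tilde\beta$ for the argument and a slightly smaller $\tilde\beta'$ for the hitting time. Lemma \ref{estimate_finite_dim_parameters} gives $|\beta_N(\tau_N)-\beta(\tau_N)|\to0$, so at time $\tau_N$ the other trajectory is also within $o(1)$ of the level. Therefore $\sup_{t\ge\tau_N}W_p\le 2\varepsilon/3+o(1)$.

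\textbf{The early window $[0,\tau_N]$.} Split
$$W_p(\mu^N(\theta_N(t)),f_{\alpha,\beta})\le W_p(\mu^N(\theta_N(t)),f_{\alpha_N,\beta_N})+W_p(f_{\alpha_N,\beta_N},f_{\alpha,\beta}).$$
The first term is $O(1/N)$ uniformly by Theorem \ref{conv_manifolds}. More precisely, I use the pointwise estimate from its proof: the particle configuration at parameters $(\alpha_N,\beta_N,A_N)$ is within $C_p/N$ of the OA density with the same $(\alpha_N,\beta_N)$, modulo the shift $c$. The second term is bounded by Lemma \ref{lips_wrt_wass} in terms of $|\alpha_N-\alpha|+|\beta_N-\beta|$. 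Lemma \ref{estimate_finite_dim_parameters} bounds this by
$$\mathcal C\,\tilde\beta^{N-2}/(1-\tilde\beta^N)^2+|\beta_N(0)-\beta(0)|e^{K\tau_N/2}+|\alpha_N(0)-\alpha(0)|.$$
Since $\beta(0)>0$ and $\beta_N(0)\to\beta(0)$, the upper bound on $\tau_N$ in that lemma is uniform in $N$. Hence $\mathcal C$ and $e^{K\tau_N/2}$ are bounded, and all terms tend to $0$. The main obstacle is this uniformity: $\mathcal C$ depends on $\beta_N(0)$ through $\tau_N$, and the $\alpha$ used in $W^\u(\theta^N_{\alpha(0)})$ is $\alpha_N(\beta,A)$ from the constraint rather than a free parameter. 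This must be reconciled with Lemma \ref{inf_constrain} and the Riemann-sum rate $O(1/N)$.

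\textbf{Case $\beta(0)=0$.} Here $\beta(t)\equiv0$ and $f$ is the uniform density. On $[0,T_N]$, since $B_N\le 4/3$, comparison gives $\beta_N(t)\le C\beta_N(0)e^{\frac{2K}{3}t}\le C\beta_N(0)e^{KT_N}\to0$. Then Lemma \ref{lips_wrt_wass}, with $\tilde\beta=1/2$ say, together with the $O(1/N)$ manifold estimate, gives \eqref{long-time_behaviour}. The $\alpha$-difference is harmless because $f_{\alpha,0}$ does not depend on $\alpha$.
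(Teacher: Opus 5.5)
Your decomposition matches the paper's: split at the hitting time; before it, the quantization estimate from the proof of Theorem \ref{conv_manifolds} together with Lemmas \ref{lips_wrt_wass} and \ref{estimate_finite_dim_parameters}; after it, a Dirac pivot. However, the late-window point you flagged is a genuine gap, and Lemma \ref{inf_constrain} does not close it. The mean phase $q$ does not tell you where the particles cluster. On $W^\u(\theta^N_q)$ one has $\theta_i\equiv 2\arctan(\rho\tan y_i)-\alpha_N \pmod{2\pi}$, so as $\beta_N\to1$ the particles accumulate at $-\alpha_N$. Lemma \ref{inf_constrain} then gives $-\alpha\approx q-2A-\pi$, which depends on $A$. (Step~1 in the proof of Theorem \ref{conv_rate_synchro} ii) uses $|\varphi_i|\le\pi$, but $\varphi_i$ contains the shift $2\pi k_i$, so $\bar\varphi\approx 2\pi m/N$ with $m=\#\{i:k_i=1\}$ is not small.) For example, take $N=4$, $q=0$ and $A\equiv0$, which is invariant since $D_4(0,\beta)=0$. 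Then $\alpha_N\equiv\pi$ and $\theta(t)\to(-\pi,-\pi,\pi,\pi)$, i.e.\ synchronization at $\pi\neq q$. Likewise $f_{\alpha,\beta}$ peaks at $\theta=-\alpha$. So with $q=\alpha(0)$, the particles cluster near $-\alpha_N(t)$ (close to $-\alpha(0)$ at $t=\tau_N$) and $f_{\alpha(t),\beta(t)}$ concentrates near $-\alpha(0)$. Neither is near $\delta_q=\delta_{\alpha(0)}$ unless $\alpha(0)\in\{0,\pm\pi\}$, so both late-window bounds you cite fail with that pivot. The paper's estimate of $\mathrm{II}_N$ goes through Theorem \ref{conv_rate_synchro} ii) with the same pivot, so you did not miss an ingredient from the paper. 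The step nevertheless fails as written.

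To repair it, pivot at $\delta_{-\alpha(0)}$. The counting argument of that Step~1, done modulo $2\pi$, gives directly $W_p(\mu^N(\theta),\delta_{-\alpha_N})^p\le\tilde D_p\big(\frac1N+1-\beta_N\big)$, with the logarithmic correction for $p=1$. The late window then reduces to showing that $\sup_{t\ge\tau_N}|\alpha_N(t)-\alpha(0)|$ (mod $2\pi$) is small for $N$ large and $\tilde\beta$ close to $1$. Lemma \ref{estimate_finite_dim_parameters} gives this only up to $t=\tau_N$. After $\tau_N$, the available bound on $\dot\alpha_N=\frac K2(1+\beta_N^2)D_N$, namely $|D_N|\le(1-\beta_N^2)/\big(\beta_N^2(1-\beta_N^N)\big)$, is of order $1/N$ once $N(1-\beta_N)\lesssim1$, so it does not integrate over $[\tau_N,\infty)$. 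You need a separate argument that the cluster does not drift after $\tau_N$, e.g.\ controlling the phase of the order parameter once most of the mass is concentrated.

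The rest of your proposal is sound, and it organizes the limit differently from the paper. You fix $\tilde\beta=\tilde\beta(\varepsilon)$ and let $N\to\infty$, so $C_p(\tilde\beta)$, $E_p/(1-\tilde\beta)^2$, $\mathcal C$ and $e^{K\tau_N/2}$ stay bounded and only qualitative convergence of the initial data is used. The paper instead takes $\tilde\beta(N)=1-\delta_N\to1$ with $\delta_N N\to\infty$ and rate conditions on the initial data; this is what produces the explicit rates in the subsequent corollary. No reconciliation via Lemma \ref{inf_constrain} is needed in the early window, because the quantization estimate compares with $f_{\alpha_N,\beta_N}$ for every $A$. Your $\beta(0)=0$ argument via $f_{\alpha_N,0}=f_{\alpha(t),0}$ is cleaner than the paper's, since it needs no control of $\alpha_N$. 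One minor slip: the roles of $\tilde\beta$ and $\tilde\beta'$ in your late-window paragraph are swapped. The hitting level must be the larger one, or you can simply use monotonicity of the static bounds in $\beta$.
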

\begin{proof}
    We first consider the case $\beta(0)\in(0,1)$. To this end, we define the \textit{splitting} time as in Lemma \ref{estimate_finite_dim_parameters}
    $$\tau_N\coloneqq \inf\Big\{ t\geq0: \beta(t)=\Tilde{\beta}(N) \ \text{or}\ \beta_N(t)=\Tilde{\beta}(N) \Big\}$$
    with $\tilde{\beta}(N)=1-\delta_N\in[0,1)$ for every $N\in\mathbb{N}$, where the sequence $(\delta_N)_{N\in\mathbb{N}}$ satisfies
    $$\lim_{N\rightarrow+\infty}\delta_N=0, \qquad\lim_{N\rightarrow+\infty}\delta_NN=+\infty,\qquad\max\{\beta(0),\beta_N(0)\}<\tilde{\beta}(N) \quad\forall N\in\N,$$
    $$\lim_{N\rightarrow+\infty}\frac{|\alpha_N(0)-\alpha(0)|}{\delta_N^2}=0, \qquad\lim_{N\rightarrow+\infty}\frac{|\beta_N(0)-\beta(0)|}{\delta_N^{\frac{5}{2}}}=0.$$
    As already anticipated, the main idea of the proof is to estimate differently the irregular time regime $t\geq \tau_N$, for which $\beta\approx1$, and the regular one $t\leq \tau_N$ when $\beta$ is distant enough from 1. For the sake of clarity in the following we denote $\theta(\alpha_N(t),\beta_N(t),A_N(t))$ simply with $\theta^N(t)$. With this in mind we now define
$$\mathrm{I}_N\coloneqq\sup_{t\in[0,\tau_N]}W_p\left(\mu^N\left(\theta^N(t)\right),f_{\alpha(t),\beta(t)}\right)^p,$$
$$\mathrm{II}_N\coloneqq\sup_{t\in[\tau_N,+\infty)}W_p\left(\mu^N\left(\theta^N(t)\right),f_{\alpha(t),\beta(t)}\right)^p$$
    so that $$\sup_{t\in\R^+}W_p\left(\mu^N\left(\theta^N(t)\right),f_{\alpha(t),\beta(t)}\right)^p=\max\left\{\mathrm{I}_N,\mathrm{II}_N\right\}.$$
    From the definition of $\tau_N$ and Lemma \ref{estimate_finite_dim_parameters} we know that
    $$\tau_N\geq\frac{1}{3K}\ln\left(\frac{\tilde{\beta}(N)^2\left(1-\max\{\beta(0),\beta_N(0)\}^2\right)}{\max\{\beta(0),\beta_N(0)\}^2(1-\tilde{\beta}(N)^2)}\right)\eqqcolon\tau_N^-.$$
    In the following we prove the statement of theorem for $p>1$, but a similar argument holds for $p=1$. We can easily estimate $\mathrm{II}_N$ by using Theorem \ref{conv_rate_synchro}, indeed 
     \begin{align*}
2^{1-p}\mathrm{II}_N\leq&\sup_{t\in[\tau_N,+\infty)}W_p\left(\mu^N\left(\theta^N(t)\right),\delta_q\right)^p+\sup_{t\in[\tau_N,+\infty)}W_p\left(\delta_q,f_{\alpha(t),\beta(t)}\right)^p\leq\\
\leq&\sup_{t\in[\tau_N,+\infty)}\Tilde{C}_p\frac{1-\beta_N(0)^2}{\beta_N(0)^2}e^{-Kt}+\Tilde{D}_p\left(\frac{1}{N}+\frac{1-\beta(0)^2}{\beta(0)^2}e^{-\frac{K}{3}t}\right)=\\
=&\Tilde{C}_p\frac{1-\beta_N(0)^2}{\beta_N(0)^2}e^{-K\tau_N^-}+\Tilde{D}_p\left(\frac{1}{N}+\frac{1-\beta(0)^2}{\beta(0)^2}e^{-\frac{K}{3}\tau_N^-}\right)=\\
=&O\left(\delta_N^{\frac{1}{3}}+\frac{1}{N}+\delta_N^{\frac{1}{9}}\right)=O\left(\frac{1}{N}+\delta_N^{\frac{1}{9}}\right).
    \end{align*}
    The estimate of $\mathrm{I}_N$ proceed as follows
    \begin{align*}
       \frac{\mathrm{I}_N}{2^{p-1}}\leq& \sup_{t\in[0,\tau_N]}W_p\left(\mu^N\left(\theta^N(t)\right),f_{\alpha_N(t),\beta_N(t)}\right)^p+\sup_{t\in[0,\tau_N]}W_p\left(f_{\alpha_N(t),\beta_N(t)},f_{\alpha(t),\beta(t)}\right)^p\leq\\
       \leq&\frac{C_p(\tilde{\beta}(N))^p}{N^p}+\frac{E_p}{(1-\tilde{\beta}(N))^2}\sup_{t\in[0,\tau_N]}\bigg(|\alpha_N(t)-\alpha(t)|+|\beta_N(t)-\beta(t)|\bigg)
    \end{align*}
    where for the first addend we have used Theorem \ref{conv_manifolds} and for the second addend Lemma \ref{lips_wrt_wass}.
    We continue by applying Lemma \ref{estimate_finite_dim_parameters} to the second addend, therefore obtaining that
    \begin{align*}
        \frac{\mathrm{I}_N}{2^{p-1}}\leq&\frac{C_p(\tilde{\beta}(N))^p}{N^p}+\frac{E_p}{(1-\tilde{\beta}(N))^2}\left(\frac{\mathcal{C}(\tilde{\beta}(N)){\tilde{\beta}(N)}^{N-2}}{\left(1-{\tilde{\beta}(N)}^N\right)^2}+|\beta_N(0)-\beta(0)|\sqrt{\frac{\tilde{\beta}(N)^2(1-\beta(0)^2)}{(1-\tilde{\beta}(N)^2)\beta(0)^2}}+\right.\\
        &+|\alpha_N(0)-\alpha(0)|+\left|A_N(0)-\frac{2\alpha(0)+\pi}{2}\right| \biggr)\leq\\
        \leq&\frac{C_p^p}{\left(1-\tilde{\beta}(N)\right)^p}\frac{1}{N^p}+\frac{E_p\mathcal{C}}{(1-\tilde{\beta}(N))^2}\frac{{\tilde{\beta}(N)}^{N-2}}{\left(1-{\tilde{\beta}(N)}^N\right)^2}\left(\ln\left(\frac{\tilde{\beta}(N)^2(1-\beta(0)^2)}{\beta(0)^2(1-\tilde{\beta}(N)^2)}\right)+\right.\\
        +&\left.\sqrt{\frac{\tilde{\beta}(N)^2(1-\beta(0)^2)}{\beta(0)^2(1-\tilde{\beta}(N)^2)}}\right)+\frac{E_p\mathcal{C}}{(1-\tilde{\beta}(N))^2}\Bigg(|\beta_N(0)-\beta(0)|\sqrt{\frac{\tilde{\beta}(N)^2(1-\beta(0)^2)}{(1-\tilde{\beta}(N)^2)\beta(0)^2}}+\\
        +&|\alpha_N(0)-\alpha(0)|+\left|A_N(0)-\frac{2\alpha(0)+\pi}{2}\right|\Bigg)=\\
        =&O\left(\frac{1}{\left(\delta_NN\right)^p}+\frac{e^{-\delta_NN}}{\delta_N^2}\left[1-\ln\left(\delta_N\right)+\frac{1}{\sqrt{\delta_N}}\right]+\frac{|\beta_N(0)-\beta(0)|}{\delta_N^{\frac{5}{2}}}+\frac{|\alpha_N(0)-\alpha(0)|}{\delta_N^{2}}\right).
    \end{align*}
    The contribution of $A_N(0)$ disappears in the limit $N\to\infty$, due to its explicit dependence on $(\alpha_N(0),\beta_N(0))$ established in Lemma \ref{first_int}.
    From the estimates on $\mathrm{I}_N$ and $\mathrm{II}_N$ we can therefore already deduce the validity of \eqref{uniform_MFL_eq}.\\

    We now turn to the case $\beta(0)=0$. In this case, by \eqref{3d-system} and the estimate
    $$B_N(A,\beta)\leq\frac{1+\beta^{N-2}}{1+\beta^N}\leq2$$
    we obtain
    $$\dot{\beta}_N(t)\leq K\beta_N(t).$$
    An application of Gronwall's inequality then yields
    $$\beta_N(t)\leq\beta_N(0)e^{Kt}.$$
    Using the assumption on $T_N$, we deduce that
    $$\lim_{N\rightarrow+\infty}\sup_{t\in[0,T_N]}|\beta_N(t)-\beta(t)|=\lim_{N\rightarrow+\infty}\sup_{t\in[0,T_N]}\beta_N(t)=0.$$
    Therefore, for $N$ sufficiently large,
    \begin{equation}\label{unif_est_beta}
        \sup_{t\in[0,T_N]}\beta_N(t)\leq\frac{1}{2}.
    \end{equation}
    Proceeding as in the proof of $\ref{estimate_finite_dim_parameters}$, we obtain
    \begin{align*}
        \sup_{t\in[0,T_N]}|\alpha_N(t)-\alpha(t)|\leq&|\alpha_N(0)-\alpha(0)|+4K\int_0^{T_N}\beta_N(s)\d s\leq\\
        \leq&|\alpha_N(0)-\alpha(0)|+4\beta_N(0)e^{KT_N}.
    \end{align*}
    Applying an estimate analogous to that used for $\mathrm{I}_N$, together with \eqref{unif_est_beta}, completes the proof.
\end{proof}
It is worth noting that, when $\beta(0)=0$, uniform-in-time convergence cannot, in general, be expected for an arbitrary sequence $\left(\beta_N(0)\right)_{N\in\N}.$ In this case, one can only obtain a long-time convergence result such as \eqref{long-time_behaviour}. Indeed, if $\beta_N(0)>0$ for every $N$, then \eqref{3d-system} implies that $$ \lim_{t\to+\infty}\beta_N(t)=1 $$ for every $N$, whereas $\beta(t)=0$ for all $t\in\mathbb{R}_+$. Consequently, \eqref{uniform_MFL_eq} cannot hold. The next corollary provides sufficient conditions under which uniform-in-time convergence is recovered in the case $\beta(0)=0$. It also yields an explicit convergence rate of \eqref{uniform_MFL_eq} for every $\beta(0)\in[0,1)$.
\begin{corollary}
In the setting of the previous theorem, when $\beta(0)\in(0,1)$ if we have that
\begin{equation}\label{O_parameters}
    |\alpha_N(0)-\alpha(0)|=O\left(N^{-\frac{19p}{1+9p}}\right)\qquad\text{and}\qquad|\beta_N(0)-\beta(0)|=O\left(N^{-\frac{47p}{2+18p}}\right)
\end{equation}
then there exists a positive constant $\frak{C}_p>0$ such that  we have that for every $p>1$ it holds
\begin{equation}\label{W_p_rate}
    \sup_{t\in\R^+}W_p\left(\mu^N\left(\theta(\alpha_N(t),\beta_N(t),A_N(t))\right),f_{\alpha(t),\beta(t)}\right)^p\leq \frak{C}_pN^{-\frac{p}{1+9p}}
\end{equation}
and
\begin{equation}\label{W_1_rate}
    \sup_{t\in\R^+}W_1\left(\mu^N\left(\theta(\alpha_N(t),\beta_N(t),A_N(t))\right),f_{\alpha(t),\beta(t)}\right)\leq \frak{C}_1N^{-\frac{1}{10}}\left(\ln N\right)^{\frac{9}{10}}
\end{equation}
therefore attaining the best convergence rate obtainable with our technical estimates.
Moreover, if $\beta(0)=0$ and $\beta_N(0)=0$ for every $N\in\N$, then we obtain
$$\sup_{t\in\R^+}W_p\left(\mu^N\left(\theta(\alpha_N(t),\beta_N(t),A_N(t))\right),f_{\alpha(t),\beta(t)}\right)\leq \frac{C_p}{N}.$$
\end{corollary}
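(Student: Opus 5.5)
The plan is to reuse the two-regime splitting from the proof of Theorem \ref{unif_MFL} with a concrete power-law choice $\delta_N=N^{-\gamma}$, $\gamma\in(0,1)$, in place of a generic sequence. Setting $\tilde\beta(N)=1-\delta_N$, the estimates obtained there read
$$\mathrm{II}_N=O\Big(\tfrac1N+\delta_N^{1/9}\Big),\qquad \mathrm{I}_N=O\Big(\tfrac{1}{(\delta_NN)^p}+\tfrac{e^{-\delta_NN}}{\delta_N^2}\big[1+|\ln\delta_N|+\delta_N^{-1/2}\big]+\tfrac{|\beta_N(0)-\beta(0)|}{\delta_N^{5/2}}+\tfrac{|\alpha_N(0)-\alpha(0)|}{\delta_N^{2}}\Big).$$
The supremum over $t\in\R^+$ of $W_p^p$ is then at most a constant times $\max\{\mathrm{I}_N,\mathrm{II}_N\}$, and the problem reduces to choosing $\gamma$ to balance the terms.

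For $p>1$ I would proceed as follows.
\begin{itemize}
\item The dominant competing terms are $\delta_N^{1/9}=N^{-\gamma/9}$ from $\mathrm{II}_N$ and $(\delta_NN)^{-p}=N^{-p(1-\gamma)}$ from $\mathrm{I}_N$.
\item The exponentially small term $e^{-N^{1-\gamma}}N^{5\gamma/2}$ is negligible for any $\gamma<1$.
\item Equating $\gamma/9=p(1-\gamma)$ gives $\gamma=\frac{9p}{1+9p}$, so both dominant terms become $N^{-\frac{p}{1+9p}}$, which is \eqref{W_p_rate}.
\item It remains to check that \eqref{O_parameters} keeps the initial-data terms at the same order. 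We have $|\alpha_N(0)-\alpha(0)|/\delta_N^2=O\big(N^{-\frac{19p}{1+9p}+\frac{18p}{1+9p}}\big)=O\big(N^{-\frac{p}{1+9p}}\big)$. Likewise $|\beta_N(0)-\beta(0)|/\delta_N^{5/2}=O\big(N^{-\frac{47p}{2+18p}+\frac{45p}{2+18p}}\big)=O\big(N^{-\frac{p}{1+9p}}\big)$.
\item The same rates also guarantee the qualitative hypotheses on $\delta_N$ required in the proof of Theorem \ref{unif_MFL}.
\end{itemize}
I also need to make sure that the $A_N(0)$ contribution, controlled through Lemma \ref{first_int}, is of the same order. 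I expect it to be bounded by a multiple of $|\alpha_N(0)-\alpha(0)|+|\beta_N(0)-\beta(0)|$ plus an $O(1/N)$ Riemann-sum error from Lemma \ref{inf_constrain}, using the Lipschitz bound $\operatorname{Lip}(h)/(4N)$ from the proof of Theorem \ref{conv_manifolds}.

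For $p=1$ the same splitting applies, but the $W_1$ bounds in Theorem \ref{conv_rate_synchro} carry logarithmic factors. The $\mathrm{II}_N$ term becomes $\delta_N^{1/9}$ times a power of $|\ln\delta_N|$, and the $\mathrm{I}_N$ term is $(\delta_NN)^{-1}$. Taking $\delta_N=N^{-9/10}(\ln N)^{c}$ and optimizing $c$ so that the logarithmic factors balance should yield $N^{-1/10}(\ln N)^{9/10}$. This bookkeeping of the log powers, in particular tracking the factor $1+|\ln(\cdot)|+\tfrac K3 t$ at $t=\tau_N^-$, is the step I expect to be most delicate. I would first test the ansatz $\delta_N=N^{-9/10}(\ln N)^{9/10}$ and adjust the exponent if the two terms do not match.

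The final case $\beta(0)=\beta_N(0)=0$ is immediate. The last part of Lemma \ref{estimate_finite_dim_parameters} gives $\beta_N\equiv\beta\equiv0$, so the trajectory stays at the incoherent equilibrium $\theta^N_q$ and $f_{\alpha,0}=\frac1{2\pi}$. Theorem \ref{conv_manifolds}, applied with $\tilde\beta=0$, then gives a bound $C_p/N$ uniformly in time.
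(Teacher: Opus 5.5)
Your route is the paper's. The paper also takes the bound $R_N$ from the proof of Theorem~\ref{unif_MFL}, chooses $\delta_N\asymp N^{-\frac{9p}{1+9p}}$, checks that \eqref{O_parameters} makes the initial-data terms $O(N^{-\frac{p}{1+9p}})$, and handles $\beta(0)=\beta_N(0)=0$ via Theorem~\ref{conv_manifolds} as you do. For the ``best rate'' remark the paper argues for arbitrary $\delta_N$, using the weighted geometric-mean bound $\max\{(\delta_NN)^{-p},\delta_N^{1/9}\}\geq N^{-\frac{p}{1+9p}}$; this is stronger than your exponent matching, which only compares power laws.

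The one step that would actually fail is your planned control of $A_N(0)$. In the bound for $\mathrm{I}_N$, every parameter discrepancy is multiplied by the Lipschitz factor $E_p/(1-\tilde\beta(N))^2\sim\delta_N^{-2}$ coming from Lemma~\ref{lips_wrt_wass}. An $O(1/N)$ Riemann-sum error in $|A_N(0)-A(0)|$ would therefore contribute $N^{-1}\delta_N^{-2}=N^{\frac{9p-1}{1+9p}}\to\infty$. Fortunately this term is unnecessary. Lemma~\ref{lips_wrt_wass} only involves $|\alpha_N-\alpha|+|\beta_N-\beta|$. In the proof of Lemma~\ref{estimate_finite_dim_parameters}, those two differences are bounded using only the $A$-uniform estimates on $B_N-1$ and $D_N$, so $|A_N(0)-A(0)|$ never enters. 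This is consistent with the fact that $A$ is a free constant of \eqref{limit_dyn}, that $f_{\alpha,\beta}$ does not depend on it, and that the paper's $R_N$ contains no $A$ term.

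For $p=1$, your trial exponent has the wrong sign. With $\delta_N=N^{-9/10}(\ln N)^{c}$ one has $(\delta_NN)^{-1}=N^{-1/10}(\ln N)^{-c}$ and $\delta_N^{1/9}|\ln\delta_N|\asymp N^{-1/10}(\ln N)^{1+c/9}$. These balance at $c=-9/10$ and give \eqref{W_1_rate}, whereas $c=9/10$ leaves $N^{-1/10}(\ln N)^{11/10}$.

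With this $\delta_N$ you should also recheck the initial-data terms. Under the $p=1$ instance of \eqref{O_parameters} they are of order $N^{-1/10}(\ln N)^{9/5}$ and $N^{-1/10}(\ln N)^{9/4}$, which exceed \eqref{W_1_rate}. So for the $W_1$ rate you need either \eqref{O_parameters} for some $p>1$, or extra logarithmic decay in the hypothesis. The first option works because both exponents in \eqref{O_parameters} increase with $p$, and the resulting polynomial margin absorbs the logarithms. The paper's ``similar computations for $p=1$'' passes over this point as well.
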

\begin{proof}
From the proof of Theorem \ref{unif_MFL}, if $\beta(0)\in(0,1)$, then there exists a positive constant $\frak{C}_p>0$, which for brevity we allow to change its value from line to line, such that
$$\sup_{t\in\R^+}W_p\left(\mu^N\left(\theta^N(t)\right),f_{\alpha(t),\beta(t)}\right)^p\leq \frak{C}_pR_N$$
    with
    $$R_N\coloneqq\max\left\{\frac{1}{\left(\delta_NN\right)^p}+\frac{e^{-\delta_NN}}{\delta_N^3}+\frac{|\beta_N(0)-\beta(0)|}{\delta_N^{\frac{5}{2}}}+\frac{|\alpha_N(0)-\alpha(0)|}{\delta_N^{2}},\frac{1}{N}+\delta_N^{\frac{1}{9}}\right\}.$$
        We now choose $\delta_N$, and consequently $\tilde{\beta}(N)$, in order to get the best rate of convergence for $R_N$. We start by noticing that
    $$R_N\geq\max\left\{\frac{1}{\left(\delta_NN\right)^p},\delta_N^{\frac{1}{9}}\right\}\geq\left(\frac{1}{\left(\delta_NN\right)^p}\right)^{\frac{\frac{1}{9}}{\frac{1}{9}+p}}\left(\delta_N^{\frac{1}{9}}\right)^{\frac{p}{\frac{1}{9}+p}}=N^{-\frac{p}{1+9p}}$$
    implying that we can not, with our estimates, obtain a better rate of convergence than $N^{-\frac{p}{1+9p}}$. A straightforward computation shows that by choosing
    $$\delta_N=O\left(N^{-\frac{9p}{1+9p}}\right)$$
    and assuming \eqref{O_parameters} we exactly attain such a minimal rate of convergence, proving the theorem. Similar computations for $p=1$ leads to 
$$\sup_{t\in\R^+}W_1\left(\mu^N\left(\theta^N(t)\right),f_{\alpha(t),\beta(t)}\right)^p\leq \frak{C}_1N^{-\frac{1}{10}}\left(\ln N\right)^{\frac{9}{10}}.$$
On the other hand, suppose that $\beta(0)=0$ and $\beta_N(0)=0$ for every $N\in\N$. Then $\beta(t)=0$ and $\beta_N(t)=0$ for all $t\in\R^+$ and every $N\in\N$. Hence,
\begin{align*}
    W_p\left(\mu^N\left(\theta^N(t)\right),f_{\alpha(t),\beta(t)}\right)\leq& W_p\left(\mu^N\left(\theta^N(t)\right),f_{\alpha_N(t),\beta_N(t)}\right)+W_p\left(f_{\alpha_N(t),\beta_N(t)},f_{\alpha(t),\beta(t)}\right)=\\
    =&W_p\left(\mu^N\left(\theta^N(t)\right),f_{\alpha_N(t),0}\right)+W_p\left(f_{\alpha_N(t),0},f_{\alpha(t),0}\right)\leq\\
    \leq&\frac{C_p}{N}+0,
\end{align*}
where we used Theorem~\ref{conv_manifolds} and the fact that
$$f_{\alpha,0}=\frac{\d\theta}{2\pi}$$for every $\alpha\in[-\pi,\pi]$. This concludes the proof.
\end{proof}

We would like to end this section by observing how the estimates \eqref{W_p_rate} and \eqref{W_1_rate} are related. Given that for every $\varepsilon>0$, for every $N>1$ the following basic estimate holds
$$\ln N\leq\frac{N^\varepsilon}{e\varepsilon}$$
we can easily deduce that for every $\varepsilon>0$
$$\sup_{t\in\R^+}W_1\left(\mu^N\left(\theta^N(t)\right),f_{\alpha(t),\beta(t)}\right)\leq \frac{\frak{C}_1}{e\varepsilon}N^{-\frac{2}{11}+\varepsilon}$$
which resembles \eqref{W_p_rate} in the limit $p\rightarrow1$. Moreover we know from \cite[Chapter 7]{villani2021topics} that for any $p_1\leq p_2$ it holds
$$W_{p_1}\leq W_{p_2}$$
which shows how the previous estimate is indeed compatible once again with \eqref{W_p_rate}, since
$$\sup_{t\in\R^+}W_1\left(\mu^N\left(\theta^N(t)\right),f_{\alpha(t),\beta(t)}\right)\leq\sup_{t\in\R^+}W_p\left(\mu^N\left(\theta^N(t)\right),f_{\alpha(t),\beta(t)}\right)\leq\frak{C}_p^{\frac{1}{p}}N^{-\frac{2}{2+9p}}.$$
We furthermore emphasize that since we are working on a compact domain, with $\operatorname{diam}\left(\mathbb{S}^1\right)=\pi$, then we have that all the Wasserstein distances $W_p$ are equivalent since for any $p_1\geq p_2$ it holds
$$W_{p_1}\leq\pi^{1-\frac{p_2}{p_1}} W_{p_2}^{\frac{p_2}{p_1}},$$
therefore implying that the uniform in time mean-field limit for all $p\geq1$ takes place with the best convergence rate among all the $p\geq1$.
\end{section}





%% file: conclusion.tex
\begin{section}{Conclusion and Outlook}\label{concl}
In this work, we have studied the relation between an unstable manifold of the finite-dimensional Kuramoto model and the Ott–Antonsen manifold of its mean-field limit. We first obtained a complete spectral characterization of the equilibria of the homogeneous Kuramoto model and used it to construct an explicit parametrization of the global unstable manifold associated with the incoherent equilibria for $N\geq3$. The resulting parametrization is particularly useful for studying the limit $N\to\infty$, since the dynamics on this manifold can be directly related to the reduced dynamics on the Ott–Antonsen manifold.\\

The main result establishes the convergence of the finite-dimensional unstable manifolds to their continuum counterparts. More precisely, after applying suitable empirical-measure and step-function lifts, we obtain convergence in Hausdorff distance to the Ott–Antonsen manifold and to the corresponding unstable manifold of the continuum limit. On every parameter range $\beta\in[0,\Tilde{\beta}]$, with $\Tilde{\beta}<1$, the convergence is of order $1/N$. This provides, in particular, a geometric connection between invariant manifolds of the particle system and invariant manifolds of its mean-field limit.\\

The results obtained here suggest that the convergence of invariant structures may provide a useful perspective on mean-field limits beyond the convergence of individual trajectories. In particular, they indicate the existence of regions of phase space in which the mean-field approximation holds with improved, and even uniform-in-time, convergence rates. A natural direction for future research is to determine how robust and general this phenomenon is. More broadly, it would be interesting to develop a general theory for the convergence of invariant manifolds associated with finite-dimensional particle systems towards their counterparts, whenever they exist, in the mean-field limit. The results of the present work rely on the explicit parametrization of the underlying finite-dimensional manifolds. It would therefore be particularly desirable to establish such convergence results within a more abstract framework, without requiring explicit descriptions of the invariant structures involved.
\subsection*{Acknowledgements}
 The authors have been supported by the  Deutsche Forschungsgemeinschaft (DFG, German Research Foundation) - Project ID 543163250.
\end{section}

%% file: Appendix.tex
\appendix\label{APP}

\begin{section}{Complementary formulas}\label{App_A}
For the sake of completeness, in this appendix we prove a couple of identities that are used throughout the proofs of our main results. We recall that $\mathrm{Arg}(z) : \mathbb{C} \setminus \{0\} \to (-\pi, \pi]$, with $z=x+\i y$, is defined as
$$\mathrm{Arg}(z) \coloneqq
\begin{cases}
\arctan\left(\frac{y}{x}\right) & \text{if } x > 0,\; y \in \mathbb{R}, \\
\arctan\left(\frac{y}{x}\right) + \pi & \text{if } x < 0,\; y \ge 0, \\
\arctan\left(\frac{y}{x}\right) - \pi & \text{if } x < 0,\; y < 0, \\
\frac{\pi}{2} & \text{if } x = 0,\; y > 0, \\
-\frac{\pi}{2} & \text{if } x = 0,\; y < 0, \\
\text{undefined} & \text{if } x = 0,\; y = 0.
\end{cases}$$

\begin{lemma}
    The following trigonometric identities hold. For each $\beta\in[0,1)$ and $\alpha\in[-\pi,\pi]$, we have
        \begin{equation}\label{trig_1}
        \Arg\left(\frac{1-\beta e^{-\i\alpha}}{1-\beta e^{\i\alpha}}\right)=2\arctan\left(\frac{\beta\sin(\alpha)}{1-\beta\cos(\alpha)}\right)
\end{equation}
\begin{equation}\label{trig_2}
\arctan\left(\frac{1+\beta}{1-\beta}\tan\left(\frac{\alpha}{2}\right)\right)-\arctan\left(\frac{\beta\sin(\alpha)}{1-\beta\cos(\alpha)}\right)=\frac{\alpha}{2}
\end{equation}
\end{lemma}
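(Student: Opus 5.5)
For \eqref{trig_1} I would use that numerator and denominator are complex conjugates. For \eqref{trig_2} I would compare derivatives in $\alpha$ on $(-\pi,\pi)$, check the value at $\alpha=0$, and treat $\alpha=\pm\pi$ by continuity.

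\textbf{Identity \eqref{trig_1}.} Set $w\coloneqq1-\beta e^{\i\alpha}$. Then the numerator is $\overline{w}=1-\beta e^{-\i\alpha}$, so the quotient equals $\overline{w}/w$.
\begin{itemize}
\item Since $\beta<1$, we have $\operatorname{Re}w=1-\beta\cos\alpha>0$. By the first case in the definition of $\Arg$, $\Arg(w)=\arctan\left(\frac{-\beta\sin\alpha}{1-\beta\cos\alpha}\right)$.
\item Because $w$ lies in the open disc of radius $\beta<1$ centred at $1$, we have $|\Arg(w)|\le\arcsin\beta<\pi/2$.
\item Writing $w=|w|e^{\i\Arg(w)}$ gives $\overline{w}/w=e^{-2\i\Arg(w)}$.
\item Since $-2\Arg(w)\in(-\pi,\pi)$, this angle is already the principal value. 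Hence $\Arg(\overline{w}/w)=-2\Arg(w)=2\arctan\left(\frac{\beta\sin\alpha}{1-\beta\cos\alpha}\right)$, using that $\arctan$ is odd.
\end{itemize}

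\textbf{Identity \eqref{trig_2}.} Let $k\coloneqq\frac{1+\beta}{1-\beta}$ and $D\coloneqq1-2\beta\cos\alpha+\beta^2>0$. Define $h(\alpha)$ as the left-hand side minus $\alpha/2$, first on $(-\pi,\pi)$.
\begin{itemize}
\item At $\alpha=0$ every term vanishes, so $h(0)=0$.
\item Derivative of the first term: it equals $\frac{k}{2\left(\cos^2(\alpha/2)+k^2\sin^2(\alpha/2)\right)}$. Using $(1-\beta)^2\cos^2(\alpha/2)+(1+\beta)^2\sin^2(\alpha/2)=D$, this simplifies to $\frac{1-\beta^2}{2D}$.
\item Derivative of the second term: a direct quotient-rule computation gives $\frac{d}{d\alpha}\arctan\left(\frac{\beta\sin\alpha}{1-\beta\cos\alpha}\right)=\frac{\beta\cos\alpha-\beta^2}{D}$.
\item Then $\frac12+\frac{\beta\cos\alpha-\beta^2}{D}=\frac{1-\beta^2}{2D}$. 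So $h'\equiv0$ on $(-\pi,\pi)$, and therefore $h\equiv0$ there.
\end{itemize}

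\textbf{Endpoints $\alpha=\pm\pi$.} This is the only delicate point, because $\tan(\alpha/2)$ blows up. I would interpret $\arctan\left(k\tan(\pm\pi/2)\right)=\pm\pi/2$, which is the continuous extension since $k>0$. The second term is continuous at $\pm\pi$ with value $0$ there. So the left-hand side equals $\pm\pi/2=\alpha/2$, and the identity extends to the closed interval $[-\pi,\pi]$.

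I expect no real obstacle beyond this branch and endpoint bookkeeping. It is handled by the continuity argument above and, in \eqref{trig_1}, by the bound $|\Arg(w)|<\pi/2$.
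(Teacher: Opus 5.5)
Your proof is correct. For \eqref{trig_1} you follow the paper's argument: you write the quotient as $\overline{w}/w$ with $\operatorname{Re}w>0$. You also supply a step the paper skips, namely that $-2\Arg(w)\in(-\pi,\pi)$, so it is already the principal argument. One small slip: $w$ lies on the circle $|w-1|=\beta$, not in the open disc of radius $\beta$. This is harmless, because $\operatorname{Re}w>0$ alone already gives $|\Arg(w)|<\pi/2$.

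For \eqref{trig_2} you take a different route from the paper.

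\emph{The paper's route.} It reuses the same complex number, factoring
$1-\beta e^{\i\alpha}=e^{\i\alpha/2}\big((1-\beta)\cos(\alpha/2)-\i(1+\beta)\sin(\alpha/2)\big)$,
and takes $\Arg$ of both sides. The identity then appears as two expressions for the same argument. This is shorter and explains where the identity comes from, tying it directly to \eqref{trig_1}. However, it tacitly uses additivity of $\Arg$ for that product. It also applies the formula $\arctan(y/x)$ to a factor whose real part vanishes at $\alpha=\pm\pi$, so it needs the same endpoint convention you state explicitly.

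\emph{Your route.} You show $h'\equiv 0$ on $(-\pi,\pi)$, using $(1-\beta)^2\cos^2(\alpha/2)+(1+\beta)^2\sin^2(\alpha/2)=1-2\beta\cos\alpha+\beta^2$, together with $h(0)=0$. This never touches branch choices in the interior and makes the endpoint interpretation explicit. The cost is a slightly longer computation and the loss of the link to \eqref{trig_1}.
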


\begin{proof}
We start by setting
$$z\coloneqq 1-\beta e^{\i\alpha}=1-\beta\cos(\alpha)-\i\beta\sin(\alpha).$$
Since $\text{Re}(z)>0$, it follows that
$$\Arg(z)=-\arctan\left(\frac{\beta\sin(\alpha)}{1-\beta\cos(\alpha)}\right).$$
Using standard properties of the argument function, we obtain
$$\Arg\left(\frac{1-\beta e^{-\i\alpha}}{1-\beta e^{\i\alpha}}\right)
=\Arg\left(\frac{\overline{z}}{z}\right)
=\Arg(\overline{z})-\Arg(z)
=-2\Arg(z)
=2\arctan\left(\frac{\beta\sin(\alpha)}{1-\beta\cos(\alpha)}\right),$$
which proves \eqref{trig_1}. In order to prove \eqref{trig_2}, we proceed similarly. Indeed we write
$$z=e^{\i\alpha/2}\left(e^{-\i\alpha/2}-\beta e^{\i\alpha/2}\right)
=e^{\i\alpha/2}\Big((1-\beta)\cos(\alpha/2)-\i (1+\beta)\sin(\alpha/2)\Big)$$
and applying $\Arg(\cdot)$ to both sides yields
$$-\arctan\left(\frac{\beta\sin(\alpha)}{1-\beta\cos(\alpha)}\right)
=\Arg(z)
=\frac{\alpha}{2}-\arctan\left(\frac{(1+\beta)\sin(\alpha/2)}{(1-\beta)\cos(\alpha/2)}\right),$$
which proves \eqref{trig_2}.
\end{proof}

We now derive an explicit expression of $A$ as a function of $\alpha$ and $\beta$. The idea is essentially to use again that the mean phase of the system is conserved as in \eqref{mean_phase_new}, where $\alpha$ was easily expressed as a function of $A$ and $\beta$. We are going to denote such function as $A_N(\alpha,\beta)$ or just $A(\alpha,\beta)$ or $A$ when $N$ is clear from the context, depending on what we want to emphasize.
\begin{lemma}\label{first_int}
Fix a mean phase $q\in\T$, then in the $(\alpha,\beta,A)$ coordinates we have that
\begin{equation}\label{first_int_express}
    \Lambda\coloneqq e^{\i Nq}=e^{-\i N\alpha}\frac{\beta^N+(-1)^Ne^{2\i NA}}{1+(-\beta)^Ne^{2\i N A}}
\end{equation}
is a conserved quantity as well. Moreover we obtain that
\begin{equation}\label{A_express}
A_N(\alpha,\beta)=\frac{\alpha+q+\pi}{2}+\frac{1}{N}\arctan\left(\frac{\beta^N\sin\left(N(\alpha+q)\right)}{1-\beta^N\cos\left(N(\alpha+q)\right)}\right)\qquad \operatorname{mod}\;\frac{\pi}{N}
\end{equation}

\end{lemma}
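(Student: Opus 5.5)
The plan is to compute the product $\prod_{j=1}^N e^{\i\theta_j}$ along the parametrization of Theorem \ref{main_theorem} in closed form. Since the mean phase \eqref{mean_phase} is a first integral, this product equals $e^{\i N q}$. By Step 1 of the proof of Theorem \ref{main_theorem}, $\mathcal{M}$ sits inside a hypercube of $\R^N$, so $\sum_i\theta_i=Nq$ holds exactly and not just modulo $2\pi$. Hence $\Lambda=e^{\i Nq}$ is automatically conserved, and the whole content of the lemma is the explicit formula in $(\alpha,\beta,A)$ and its inversion for $A$.

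\textbf{Step 1 (the product).} Use \eqref{exp_expression} from the proof of Lemma \ref{discrete_lemma}: with $C=e^{2\i A}$ and $\eta_j=e^{2\pi\i\xi_j}$ one has $e^{\i\phi_j}=e^{-\i\alpha}(C\eta_j+\beta)/(1+C\beta\eta_j)$. The $\eta_j$ are exactly the $N$ roots of $z^N+1$, so $\prod_j(x-\eta_j)=x^N+1$. Factoring out $C$, respectively $\beta C$, gives
\begin{align*}
\prod_{j=1}^N(\beta+C\eta_j)&=(-C)^N\Big(\big(-\tfrac{\beta}{C}\big)^N+1\Big)=\beta^N+(-1)^NC^N,\\
\prod_{j=1}^N(1+\beta C\eta_j)&=(-\beta C)^N\Big(\big(-\tfrac{1}{\beta C}\big)^N+1\Big)=1+(-\beta C)^N.
\end{align*}
Since $C^N=e^{2\i NA}$, this yields \eqref{first_int_express}. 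The case $\beta=0$ can be checked directly or obtained by continuity.

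\textbf{Step 2 (inverting for $A$).} Put $u\coloneqq\Lambda e^{\i N\alpha}=e^{\i N(\alpha+q)}$ and $w\coloneqq(-1)^Ne^{2\i NA}$. Then \eqref{first_int_express} reads $u=(\beta^N+w)/(1+\beta^N w)$, which is a Möbius map of the unit circle onto itself since $\beta^N<1$. Its inverse is
$$w=\frac{u-\beta^N}{1-\beta^N u}=u\,\frac{1-\beta^N\bar u}{1-\beta^N u}.$$
Taking arguments and applying \eqref{trig_1} with $\beta^N$ in place of $\beta$ and $N(\alpha+q)$ in place of $\alpha$ gives
$$2NA+N\pi\equiv N(\alpha+q)+2\arctan\left(\frac{\beta^N\sin(N(\alpha+q))}{1-\beta^N\cos(N(\alpha+q))}\right)\pmod{2\pi}.$$
Dividing by $2N$ gives \eqref{A_express} modulo $\pi/N$, up to the sign convention on the $\pi$-shift, which only changes the representative modulo $\pi/N$.

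\textbf{Main obstacle.} The delicate point is keeping the branch bookkeeping honest. One issue is that \eqref{trig_1} is stated for angles in $[-\pi,\pi]$, while $N(\alpha+q)$ is arbitrary; this is harmless because both sides are $2\pi$-periodic in that argument. The other is that $A$ is only determined modulo $\pi/N$. I would remark that this matches the invariance of the ansatz: shifting $A$ by $\pi/N$ shifts $\pi\xi_i+A$ to $\pi\xi_{i+1}+A$, which is a relabeling of the particles that preserves the mean phase. So on the fundamental range the formula selects the correct branch.
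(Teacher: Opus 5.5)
Your proposal is correct and follows the paper's proof step for step: the same formula \eqref{exp_expression} for $e^{\i\phi_j}$, the same evaluation of the two products through the roots $\eta_j$ of $z^N+1$ (the paper evaluates $z^N+C^N=\prod_j(z-C\eta_j)$ at $z=-\beta$ and $z=-1/\beta$, which is equivalent to your factoring), and the same M\"obius inversion followed by \eqref{trig_1}. Your two side observations are correct and make explicit what the paper leaves implicit: \eqref{trig_1} extends to all angles by $2\pi$-periodicity, and the shifts $\pm\frac{\pi}{2}$ differ by $\pi=N\cdot\frac{\pi}{N}$. Your closing branch-selection remark is not needed, since the statement is only claimed modulo $\frac{\pi}{N}$.
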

\begin{proof}
By using that with our choice of $\phi_i$ we have that
$$e^{\i \phi_i}=e^{-\i \alpha}\frac{C\eta_i+\beta}{1+C\beta \eta_i}$$
with $C=e^{2\i A}$ and $\eta_i=e^{2\pi\i\xi_i}$ as in \eqref{exp_expression}. From this it follows that

$$\Lambda=\prod_{j=1}^N e^{\i\phi_j(t)}=e^{-\i N\alpha}\frac{\prod_{i=1}^N(C\eta_i+\beta)}{\prod_{i=1}^N(1+\beta C\eta_i)}$$
but we know that
$$z^N+C^N=\prod_{i=1}^N(z-C\eta_i)\qquad\forall z\in\C$$
so that computing the latter expression for $z=-\beta$ we obtain
$$\prod_{i=1}^N(C\eta_i+\beta)=\beta^N+(-1)^NC^N$$
and doing the same with $z=-\beta^{-1}$ yields
$$\prod_{i=1}^N(1+\beta C\eta_i)=1+(-\beta)^NC^N$$
which proves \eqref{first_int_express}. From this, and the definition of $\Lambda$ we obtain that 
$$e^{\i Nq}=e^{-\i N\alpha}\frac{\beta^N+(-1)^Ne^{2\i NA}}{1+(-\beta)^Ne^{2\i N A}}$$
which implies that
$$e^{2\i NA}=(-1)^N\frac{e^{\i N(\alpha+q)}-\beta^N}{1-\beta^N e^{\i N(\alpha+q)}}=e^{\i N(\alpha+q+\pi)}\frac{1-\beta^Ne^{-\i N(\alpha+q)}}{1-\beta^N e^{\i N(\alpha+q)}}.$$
By applying \eqref{trig_1} we obtain that
$$2NA=N(\alpha+q+\pi)+2\arctan\left(\frac{\beta^N\sin(N(\alpha+q))}{1-\beta^N\cos(N(\alpha+q))}\right)\qquad\operatorname{mod}\;2\pi$$
concluding our proof.
\end{proof}

\begin{lemma}\label{chain_rule}
For every $\alpha\in[-\pi,\pi]$, $\beta\in[0,1)$, and $A\in[-\pi/2,\pi/2]$, we have
\begin{equation}\label{der_alpha}
    \partial_\alpha\phi_i(\alpha,\beta,A)=-1,
\end{equation}
\begin{equation}\label{der_beta}
\partial_\beta\phi_i(\alpha,\beta,A)=-2\frac{\sin(\phi_i+\alpha)}{1-\beta^2},
\end{equation}
\begin{equation}\label{der_A}
    \partial_A\phi_i(\alpha,\beta,A)=2\frac{1-2\beta\cos(\phi_i+\alpha)+\beta^2}{1-\beta^2},
\end{equation}
where $\phi_i(\alpha,\beta,A)$ is defined as in \eqref{fin_N_ansatz}.
\end{lemma}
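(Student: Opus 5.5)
The three identities are partial derivatives of the explicit formula
$$\phi_i=2\arctan\big(\rho\tan(\pi\xi_i+A)\big)-\alpha+2\pi k,\qquad \rho\coloneqq\frac{1-\beta}{1+\beta},$$
so I will compute them directly. The integer $k$ is locally constant in $(\alpha,\beta,A)$ away from the points where $\pi\xi_i+A\in\pi/2+\pi\Z$. At those points $\phi_i$ is continuous, and the derivative formulas will extend by continuity, because their right-hand sides are continuous. The first identity \eqref{der_alpha} is then immediate, since $\alpha$ enters only through the term $-\alpha$.

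The key step is to express everything in terms of the quantity $\psi\coloneqq\phi_i+\alpha$. Set $y\coloneqq\pi\xi_i+A$ and $u\coloneqq\rho\tan y$, so that $\psi=2\arctan u$ modulo $2\pi$. The standard half-angle identities then give
$$\cos\psi=\frac{1-u^2}{1+u^2},\qquad \sin\psi=\frac{2u}{1+u^2}.$$

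For \eqref{der_beta}, I compute $\partial_\beta\rho=-2/(1+\beta)^2$. This gives
$$\partial_\beta\phi_i=\frac{2\tan y}{1+u^2}\,\partial_\beta\rho=\frac{2u}{1+u^2}\cdot\frac{\partial_\beta\rho}{\rho}.$$
Since $\partial_\beta\rho/\rho=-2/\big((1-\beta)(1+\beta)\big)=-2/(1-\beta^2)$, it follows that $\partial_\beta\phi_i=-2\sin\psi/(1-\beta^2)$, as claimed.

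For \eqref{der_A}, I compute
$$\partial_A\phi_i=\frac{2\rho\sec^2y}{1+u^2}=\frac{2\rho(1+\tan^2y)}{1+\rho^2\tan^2y}.$$
The task is then to check that this equals $2(1-2\beta\cos\psi+\beta^2)/(1-\beta^2)$. First I substitute $\cos\psi=(1-u^2)/(1+u^2)$ into the right-hand side. I then write $1+\beta^2\mp2\beta$ as $(1\mp\beta)^2$, which gives
$$1-2\beta\cos\psi+\beta^2=\frac{(1-\beta)^2+(1+\beta)^2u^2}{1+u^2}.$$
Next I insert $u=\rho\tan y$, so that $(1+\beta)^2u^2=(1-\beta)^2\tan^2y$. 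The numerator becomes $(1-\beta)^2(1+\tan^2y)$. Dividing by $1-\beta^2$ gives $\rho(1+\tan^2y)/(1+u^2)$, which matches $\partial_A\phi_i/2$.

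The main obstacle is this last algebraic verification together with bookkeeping at the singular points $y\in\pi/2+\pi\Z$. There $\tan y$ blows up but $\psi=\pi$ mod $2\pi$. I will handle this either by the continuity argument above, or by noting that at those points both sides equal $2\rho^{-1}=2(1+\beta)/(1-\beta)$. This value is obtained as the limit of $2\rho(1+\tan^2y)/(1+\rho^2\tan^2y)$ as $\tan^2y\to\infty$, and matches the right-hand side at $\cos\psi=-1$.
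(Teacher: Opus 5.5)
Your proposal is correct and matches the paper's proof essentially step for step: you use the same substitution $u=\rho\tan y$, the same half-angle expressions for $\sin\psi$ and $\cos\psi$, and the same identity $(1-\beta)^2+(1+\beta)^2u^2=(1-\beta)^2\sec^2 y$ for the $A$-derivative. Your handling of the points $y\in\pi/2+\pi\Z$, which the paper's proof passes over, is a welcome addition; for $\partial_\beta$ at those points both sides simply vanish, since $\phi_i$ is constant in $\beta$ there.
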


\begin{proof}
Equation \eqref{der_alpha} is immediate, therefore we focus on \eqref{der_beta}. Define
$$u_i\coloneqq\pi\xi_i+A,\qquad r\coloneqq\frac{1-\beta}{1+\beta},\qquad x\coloneqq r\tan u_i.$$
Then
$$\partial_\beta\phi_i
=2\frac{1}{1+x^2}\,\partial_\beta(r\tan u_i)
=-4\frac{\tan u_i}{(1+\beta)^2(1+x^2)}
=-\frac{4x}{(1-\beta^2)(1+x^2)}.$$
Using the double-angle formula, we obtain
$$\sin(\phi_i+\alpha)=\sin(2\arctan x)=\frac{2x}{1+x^2},$$
which yields \eqref{der_beta}. For the derivative with respect to $A$, we have
$$\partial_A \phi_i=2\frac{1}{1+x^2}r\sec^2 u_i.$$
Again using trigonometric identities, we find
$$\cos(\phi_i+\alpha)=\frac{1-x^2}{1+x^2}$$
and by substituting this into \eqref{der_A}, we obtain
$$2\frac{1-2\beta\cos(\phi_i+\alpha)+\beta^2}{1-\beta^2}
=2\frac{(1-\beta)^2+(1+\beta)^2x^2}{(1-\beta^2)(1+x^2)}.$$
In the end, from using the definition of $x$, it follows that
$$(1-\beta)^2+(1+\beta)^2x^2=(1-\beta)^2(1+\tan^2 u_i)=(1-\beta)^2\sec^2 u_i,$$
which proves \eqref{der_A}.
\end{proof}
\end{section}

\begin{section}{Alternative computation}\label{App_B}
In this part of the appendix, we provide an alternative approach to the one used in Proposition~\ref{prop_spectral}, which allows us to derive the eigenvalues and eigenvectors of the incoherent equilibrium by means of a different method.  In what follows, without loss of generality, we consider the incoherent equilibrium with $q=0$. We denote by
$$c_i\coloneqq\cos(\theta^\star_i)=\cos\left(\frac{2\pi i}{N}\right),\qquad 
s_i\coloneqq\sin(\theta^\star_i)=\sin\left(\frac{2\pi i}{N}\right)$$
the $i$-th entries of the vectors $\underline{c}$ and $\underline{s}$, respectively.

\begin{lemma}
    The incoherent equilibrium $\theta^\star$ has 
    \begin{itemize}
        \item $\lambda_1=0$ as an eigenvalue, with associated eigenspace given by $(\operatorname{span}\{\underline{c},\underline{s}\})^\perp$;
        \item $\lambda_2=\frac{K}{2}$ as an eigenvalue, with associated eigenspace given by $\operatorname{span}\{\underline{c},\underline{s}\}$.
    \end{itemize}
\end{lemma}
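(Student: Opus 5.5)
The plan is to exploit the fact that, at the incoherent equilibrium with $q=0$, the Jacobian \eqref{gen_jacobian} is a real symmetric circulant matrix, and then diagonalize it with the discrete Fourier basis.

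\textbf{Step 1: the Jacobian is circulant.} Since $\theta^\star_j-\theta^\star_i=\frac{2\pi(j-i)}{N}$, the off-diagonal entries are $\frac{K}{N}\cos\big(\frac{2\pi(j-i)}{N}\big)$. For the diagonal, condition \nameref{B} gives $\sum_{k=1}^N\cos\big(\frac{2\pi(k-i)}{N}\big)=\operatorname{Re}\big(e^{-\i\theta^\star_i}\sum_k e^{\i\theta^\star_k}\big)=0$. Hence
$$-\sum_{k\neq i}\cos\Big(\frac{2\pi(k-i)}{N}\Big)=1=\cos(0).$$
It follows that
$$DV(\theta^\star)=\frac{K}{N}\mathcal{C},\qquad \mathcal{C}_{ij}=\cos\Big(\frac{2\pi(j-i)}{N}\Big),$$
which is circulant (entries depend only on $j-i \bmod N$) and symmetric.

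\textbf{Step 2: eigenvalues via Fourier vectors.} Let $\omega=e^{2\pi\i/N}$ and $v_m=(\omega^{mj})_{j=1}^N$ for $m=0,\ldots,N-1$. The standard circulant theory gives $\mathcal{C}v_m=\Lambda_m v_m$, where
$$\Lambda_m=\sum_{k=0}^{N-1}\cos\Big(\frac{2\pi k}{N}\Big)\omega^{mk}=\frac12\sum_{k=0}^{N-1}\big(\omega^{(m+1)k}+\omega^{(m-1)k}\big).$$
By the geometric-sum identity $\sum_k\omega^{\ell k}=N$ if $\ell\equiv0 \bmod N$ and $0$ otherwise, we get $\Lambda_m=\frac N2$ when $m\equiv\pm1$, and $\Lambda_m=0$ for every other $m$. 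The residues $1$ and $-1$ are distinct because $N\geq3$, so $N/2$ is a double eigenvalue of $\mathcal{C}$ and $0$ has multiplicity $N-2$. Multiplying by $K/N$ gives $\lambda_2=K/2$ and $\lambda_1=0$.

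\textbf{Step 3: identify the real eigenspaces.} The eigenspace for $\Lambda=N/2$ is $\operatorname{span}_\C\{v_1,v_{-1}=\overline{v_1}\}$. Its real part is spanned by $\operatorname{Re}v_1=\underline{c}$ and $\operatorname{Im}v_1=\underline{s}$. Since $\mathcal{C}$ is real symmetric, the kernel is the orthogonal complement $(\operatorname{span}\{\underline{c},\underline{s}\})^\perp$.

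A quick consistency check, and possibly a shortcut, is the addition formula $\cos(a-b)=\cos a\cos b+\sin a\sin b$, which gives $\mathcal{C}=\underline{c}\,\underline{c}^T+\underline{s}\,\underline{s}^T$. Together with $\|\underline{c}\|^2=\|\underline{s}\|^2=N/2$ and $\underline{c}\cdot\underline{s}=0$ (again geometric sums using $N\geq3$), this confirms the result directly.

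The only delicate point is the case distinction $m\equiv\pm1$ in the geometric sums, which requires $N\geq3$. For $N=2$ the residues coincide, $\underline{s}=0$, and the statement degenerates.
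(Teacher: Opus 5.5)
Your proposal is correct and follows the paper's own argument. At the incoherent equilibrium the Jacobian is the circulant matrix $\frac{K}{N}\cos\big(\frac{2\pi}{N}(j-i)\big)$, which the Fourier vectors diagonalize; the geometric sums then give the nonzero eigenvalue $K/2$ exactly at the frequencies $k=1$ and $k=N-1$, and $0$ elsewhere. You also supply details the paper leaves implicit: you derive the diagonal entries from condition B, identify the real eigenspace as $\operatorname{span}\{\underline{c},\underline{s}\}$ with the kernel as its orthogonal complement, and note that $N\geq3$ is needed, since for $N=2$ the two frequencies merge and the nonzero eigenvalue becomes $K$.
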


\begin{proof}
    Let $\theta^\star$ be an incoherent equilibrium. Then
$$\sum_{j=1}^Ne^{\i\theta^\star_j}=0,$$
which implies
$$\sum_{j=1}^N \cos{\theta^\star_j}=0.$$
Therefore, the entries of the Jacobian matrix \eqref{gen_jacobian} simplify to 
$$DV(\theta^\star)_{ij}=\frac{K}{N}\cos\left(\frac{2\pi}{N}(j-i)\right).$$
We now exploit the fact that this is a circulant matrix, which allows us to compute its eigenvalues explicitly; see \cite{tee2005eigenvectors}. Denoting the $N$-th roots of unity by
$$\rho_k\coloneqq e^{\i\frac{2\pi}{N}k},$$
it can be verified that $DV(\theta^\star)$ has $N$ orthogonal eigenvectors and eigenvalues given by
$$
\mathbf{w}^{(k)}=\left(\begin{array}{l}
1 \\
\rho_k \\
\rho_k^2 \\
\vdots \\
\rho_k^{N-1}
\end{array}\right), \quad 
\lambda_k=J_{1,1}+J_{1,2} \rho_k+J_{1,3}\rho_k^2+\cdots+J_{1,N} \rho_k^{N-1},
$$
for $k=0,1,2,\ldots,N-1$. By using the explicit form of the entries $DV(\theta^\star)_{ij}$, we obtain
\begin{align*}
    \frac{N}{K}\lambda_k=&\sum_{m=0}^{N-1} \cos \left(\frac{2 \pi}{N} m\right) e^{2 \pi i k m / N}=\sum_{m=0}^{N-1} \frac{e^{2 \pi i m / N}+e^{-2 \pi i m / N}}{2} e^{2 \pi i k m / N}=\\
    =&\frac{1}{2} \sum_{m=0}^{N-1} e^{2 \pi im(1+k) / N}+\frac{1}{2} \sum_{m=0}^{N-1} e^{-2 \pi im(1-k) / N}=\frac{N}{2}\left(\delta_{k, N-1}+\delta_{k, 1}\right)
\end{align*}
which concludes the proof.
\end{proof}
\end{section}

\begin{section}{Identification of $\OA$ with $W^\u\left(\frac{1}{2\pi}\right)$}\label{App_C}
The aim of this appendix is to clarify why, for \eqref{MFL}, we have $\OA=W^\u\left(\frac{1}{2\pi}\right)$. Consider \eqref{MFL} on the Banach space $L^1(\T^1)$ and linearize it around the equilibrium $\frac{1}{2\pi}$. This yields the linear operator
$$L_{\frac{1}{2\pi}}f(\theta)\coloneqq\frac{K}{2\pi}\int_0^{2\pi}\cos(\theta-\varphi)f(\varphi)\d \varphi,$$
which can be rewritten as
$$L_{\frac{1}{2\pi}}f(\theta)
=\cos\theta\frac{K}{2\pi}\int_0^{2\pi}f(\varphi)\cos\varphi\d \varphi
+\sin\theta\frac{K}{2\pi}\int_0^{2\pi}f(\varphi)\sin\varphi\d \varphi.$$
This expression shows that the range of $L_{\frac{1}{2\pi}}$ is contained in $\operatorname{span}\{\cos,\sin\}$. So that, in particular, $L_{\frac{1}{2\pi}}$ has finite rank and is therefore a compact operator. Consequently, its spectrum is discrete and the eigenvectors corresponding to nonzero eigenvalues must lie in $\operatorname{span}\{\cos,\sin\}$. A direct computation yields
$$L_{\frac{1}{2\pi}}\cos\theta=\frac{K}{2}\cos\theta,\qquad 
L_{\frac{1}{2\pi}}\sin\theta=\frac{K}{2}\sin\theta.$$
Hence, by \cite[Theorem 6.8]{brezis2011functional},
$$\sigma\left(L_{\frac{1}{2\pi}}\right)=\{0,K/2\},$$
and in particular $L_{\frac{1}{2\pi}}$ is a sectorial operator. Therefore, by \cite[Lemma 71.2]{sell2002dynamics}, a local unstable manifold $W^u_{\mathrm{loc}}\left(\frac{1}{2\pi}\right)$ exists in $L^1(\T^1)$. Moreover, from \eqref{red_dyn_OA}, we see that for every element $f_{\alpha,\beta}$ of $\OA$, one has
$$\lim_{t\rightarrow-\infty}f_{\alpha,\beta}(\theta)
=f_{\alpha,0}(\theta)=\frac{1}{2\pi},$$
which implies that $\OA\subseteq W^\u\left(\frac{1}{2\pi}\right)$. Since $\OA$ is a two-dimensional manifold, it follows that locally it must coincide with $W^\u_{\mathrm{loc}}\left(\frac{1}{2\pi}\right)$. Finally, since $\OA$ is invariant under the dynamics, we conclude that
$$\OA=W^\u\left(\frac{1}{2\pi}\right),$$
as in the final step of Theorem~\ref{main_theorem}.
\end{section}